
\documentclass[11pt,draft, reqno]{amsart}

\usepackage[margin=10mm]{geometry}
\usepackage{amssymb, amsmath, amsthm, amscd, soul, ulem}

\usepackage[T1]{fontenc}
\usepackage{eucal,mathrsfs}
\usepackage{color}


\renewcommand{\leq}{\leqslant}
\renewcommand{\geq}{\geqslant}
\renewcommand{\le}{\leqslant}
\renewcommand{\ge}{\geqslant}

\definecolor{mno}{rgb}{0.5,0.1,0.5}

\newcommand{\R}{\mathbb R}

\newcommand{\Pp}{\mathbb P}
\newcommand{\Ee}{\mathbb E}

\newcommand{\I}{\mathbf 1}
\newcommand{\w}{\omega}
\newcommand{\N}{\mathbb{N}}
\newcommand{\Z}{\mathbb Z}
\newcommand{\sL}{\mathcal{L}}
\newcommand{\sK}{\mathcal K}

\newcommand{\E}{\mathscr{E}}
\newcommand{\F}{\mathscr{F}}

\newcommand{\LL}{\mathcal{L}}

\newtheorem{theorem}{Theorem}[section]
\newtheorem{lemma}[theorem]{Lemma}
\newtheorem{proposition}[theorem]{Proposition}

\theoremstyle{definition}

\newtheorem{example}[theorem]{Example}
\newtheorem{remark}[theorem]{Remark}

\numberwithin{equation}{section}

\def\wt{\widetilde}

\begin{document}
\allowdisplaybreaks
\title[Time-dependent random
conductance models with stable-like jumps] {Quantitative
homogenization
on time-dependent random conductance models with stable-like jumps}

\author{Xin Chen,\quad Zhen-Qing Chen,\quad Takashi Kumagai  \quad \hbox{and}\quad Jian Wang}
\thanks{{\it X.\ Chen:}
   School of Mathematical Sciences, Shanghai Jiao Tong University, 200240 Shanghai, P.R. China. \texttt{chenxin217@sjtu.edu.cn}}
\thanks{{\it Z.-Q. Chen:} Department of Mathematics, University of Washington, Seattle,
WA 98195, USA. \texttt{zqchen@uw.edu}}
   \thanks{{\it T.\ Kumagai:}
Department of Mathematics,
Waseda University, Tokyo 169-8555, Japan.
\texttt{t-kumagai@waseda.jp}}
  \thanks{{\it J.\ Wang:}
    School of Mathematics and Statistics \& Key Laboratory of Analytical Mathematics and Applications (Ministry of Education) \& Fujian Provincial Key Laboratory
of Statistics and Artificial Intelligence, Fujian Normal University, 350007 Fuzhou, P.R. China. \texttt{jianwang@fjnu.edu.cn}}

\date{}

\maketitle

\begin{abstract} We establish quantitative
homogenization
results for time-dependent random conductance
models with stable-like
long range
jumps on $\Z^d$, where the transition probability from
$x$ to $y$ is given by $w_{t, x,y}|x-y|^{-d-\alpha}$ with $\alpha\in (0,2)$. In particular, time-dependent random coefficients $\{w_{t,x,y}: t\in \R_+, (x,y)\in E\}$ are uniformly bounded from above
(but may be degenerate), and satisfy the Kolmogorov continuous condition, where $E=\{(x, y):  x \not= y   \in \Z^d\}$ is the set of all
unordered pairs on $\Z^d$. The proofs are based on $L^2$-estimates and energy estimates for solutions to
regional
 parabolic equations and multi-scale Poincar\'e inequalities associated with time-dependent symmetric stable-like random walks with random coefficients.

\medskip

\noindent\textbf{Keywords:} time-dependent random conductance model with stable-like jumps; quantitative stochastic homogenization;  $\alpha$-stable-like process;  regional parabolic equation; multi-scale Poincar\'e inequality

\medskip

\noindent \textbf{MSC 2010:} 60G51; 60G52; 60J25; 60J75.
\end{abstract}
\allowdisplaybreaks

\section{Introduction and main result}\label{section1}
The primary purpose of this paper is to investigate
convergence rates in
the $L_2$-sense for a family of parabolic $\alpha$-stable-like operators $\partial_t-\LL_t^{k,\w}$ on $k^{-1}\Z^d$
with rapidly oscillating and time-dependent random coefficients, where
\begin{equation}\label{e:1.1}
\LL_t^{k,\w} f(x):=
k^{-d}\sum_{y\in k^{-1}\Z^d:y\neq x}\left(f(y)-f(x)\right)\frac{w_{k^\alpha t,kx,ky}(\w)}{|x-y|^{d+\alpha}}
\end{equation} with $d\ge1$,
$\alpha\in (0,2)$ and $\{w_{t,x,y}(\w):t\ge0,x,y\in \Z^d,\w\in \Omega\}$ being non-negative random coefficients on some probability space $(\Omega, \Pp)$.
Here and in what follows, we use $:=$ as a way of definition.

Throughout this paper, the following assumption is imposed
in force.

\medskip

\noindent{{\bf Assumption (H)}}:
{\it
Let $\R_+:=[0,\infty)$, and
 $E=\{(x, y):  x \not= y   \in \Z^d\}$ be the collection of all unordered distinct pairs on $\Z^d$. Suppose that  $\{w_{t,x,y}: t\in \R_+, (x,y)\in E\}$ are
time-dependent non-negative random variables satisfying the following conditions{\rm:}

\begin{itemize}

\item [(i)]
$w_{t,x,y}=w_{t,y,x}$ with $\Ee[w_{t,x,y}]=\sK (t)$
for every
$t\in \R_+$ and $(x,y)\in E$. Moreover, there exist positive constants
$K_1$, $K_2$ and $\sK$ such that
\begin{equation}\label{a1-1-0}
K_1\le \sK (t)\le K_2,\quad t\in \R_+
\end{equation} and
\begin{equation}\label{a1-1-0a}
 \lim_{t \to \infty}\frac{1}{t}\int_0^t\left|\sK (s)-\sK \right|^2\,ds=0.
\end{equation}

\item [(ii)] There exists a constant $C_1\ge1$ such that
$w_{t,x,y}\le C_1$ for all $t\in \R_+$ and
$(x,y)\in E$.

\item [(iii)] There exist positive constants $p$, $\eta$ and $C_2$ such that for all $s,t\in \R_+$ with $|t-s|\le 1$ and
$(x,y)\in E$,
\begin{equation}\label{a1-1-1}
\Ee\left[|w_{t,x,y}-w_{s,x,y}|^p\right]\le C_2|t-s|^{1+\eta}.
\end{equation}

\item [(iv)] For every $n\in \mathbb{N}_+:=\{1,2,\cdots\}$, $0
\le t_1\le t_2\le \cdots \le t_n$
and
mutually distinct pairs $(x_1,y_1),\cdots,(x_n,y_n)\in E$,
random variables $\{w_{t_i,x_i,y_i}\}_{1\le i \le n}$
are independent.
 \end{itemize}}

\smallskip

We note that
under {\bf Assumption (H)},
 the time-dependent random coefficients $\{w_{t,x,y}: t\in \R_+, (x,y)\in E\}$ are not necessarily strictly positive, and are not independent for all distinct pairs $(t,(x,y))\in \R_+\times E$.
For
notational convenience, we set  $w_{t, x, x}=0$ for all  $t>0$ and $x\in \R^d$.
First,
we note that in {\bf Assumption (H)}(ii) the
random coefficients
 $\{w_{t,x,y}: t\in \R_+, (x,y)\in E\}$ are only required to be uniformly bounded from above and
may be degenerate.
Similar assumption is also taken in \cite{CCKW3} for time-independent random conductance models
with stable-like jumps. By {\bf Assumption (H)}(iii),  the  random coefficients
$\{w_{t,x,y}: t\in \R_+, (x,y)\in E\}$ fulfill the Kolmogorov continuous condition
in  the time variable $t\in \R_+$. On the other hand, by {\bf Assumption (H)}(iv),
 the following space-time mixing condition holds: there is a constant $C_3>0$ such that for all $\psi_1,\psi_2\in
C_b(\R_+)$, $s, t\in \R_+$ and  $x, y,  v, w \in \R^d$,
$${\rm Cov} \left( \psi_1 (w_{s, v, w}), \psi_2 (w_{t+s, x+v, y+w} ) \right)
\le C_3\|\psi_1\|_\infty\|\psi_2\|_\infty
\I_{\{
(x,y)=(0,0)
\}.}
$$
Unlike the paper by
 Amstrong-Bordas-Mourrat \cite{ABM}   for the diffusive case,
we do  not assume the   random coefficients
  $\{w_{t,x,y}: t\in \R_+, (x,y)\in E\}$    satisfy the finite range dependence property in time variable $t\in \R_+$.
  In other words,   $\{w_{t,x,y}: t\in \R_+, (x,y)\in E\}$ and  $\{w_{s,x,y}: t\in \R_+, (x,y)\in E\}$ can be dependent for any
  $t\not= s$.

Let $\mu$ be a counting measure on $\Z^d$. For any $\alpha\in (0,2)$, $t\in \R_+$ and $\w\in \Omega$,
we define a time-dependent stable-like operator as follows
\begin{equation}\label{e6-1}
\LL_t^\w f(x):=\sum_{y\in \Z^d:y\neq x}\left(f(y)-f(x)\right)\frac{w_{t,x,y}(\w)}{|x-y|^{d+\alpha}},\quad f\in L^2(\Z^d;\mu).
\end{equation}
It determines a
continuous-time time-inhomogeneous  strong Markov  process $\{X_t^\w\}_{t\ge 0}$ on $\Z^d$.
For each integer $k\geq 1$, the scaled process
$\{k^{-1}X_{k^\alpha t}^{\w}\}_{t\ge 0}$ on $k^{-1}\Z^d$ has infinitesimal generator $\LL_t^{k,\w} $
given by \eqref{e:1.1}.
Define
\begin{equation}\label{e:1.2}
\begin{split}
\bar \sL f(x):=&\lim_{\varepsilon \to 0}\int_{\{|y-x|\ge \varepsilon\}} \left(f(y)-f(x)\right)
\frac{\sK }{|x-y|^{d+\alpha}}\,dy,\quad
f\in C^2_c (\R^d) , \end{split}
\end{equation}
which will be served as the limit of the scaled operators $\{\LL_t^{k,\w}\}_{k\ge1}$ as $k\to \infty$ under {\bf Assumption (H)}
(partly due to  \eqref{a1-1-0a}).

Fix
$T>0$,
$g\in C_c^2 (\R^d)$
and
$\beta\in (0,\infty]$. Let
\begin{equation}\label{e:1.4}\begin{split}
\mathscr{S}_{\beta, g,T}
:=\bigg\{
 h:\,&[0,T]\times \R^d \to \R\bigg|\, \text{there  exists}\ f\in C^1([0,T];
C_b^3 (\R^d)) \hbox{ with } f(0,x)=g(x) \text{ such\ that }   \\
&h(t,x)=\frac{\partial}{\partial t}f(t,x)-\bar \LL f(t,\cdot)(x)
\text{  for\ all } (t,x)\in (0,T]\times \R^d ,
 \hbox {and there is a constant } C_0>0 \hbox{ so that } \\
 &
\sup_{t\in [0,T], x\in \R^d,0\le i\le 3}
\Big[
{\Big(|\nabla_x ^i f(t,x)|+\Big|\frac{\partial f(t,x)}{\partial t}\Big|\Big)}
(1 \vee (C_0|x|))^{d+\beta}
 \Big]
<\infty  \bigg\}.
\end{split}\end{equation}
In particular, for $\beta=\infty$,
\begin{align*}
\mathscr{S}_{\infty, g,T}
:=\bigg\{
 h:\,&[0,T]\times \R^d \to \R\bigg|\, \text{there  exists}\ f\in C^1([0,T];
 C_c^3(\R^d))     \hbox{ with } f(0,x)=g(x)   \text{ such\ that }  \nonumber \\
&h(t,x)=\frac{\partial}{\partial t}f(t,x)-\bar \LL f(t,\cdot)(x)
\text{  for\ all } (t,x)\in (0,T]\times \R^d   \bigg\}.
\end{align*}
It is clear that
$\mathscr{S}_{\beta_1 g,T} \subset \mathscr{S}_{\beta_2, g,T}$
when $\beta_1\ge \beta_2$.
Note that for any bounded function $h$ on $[0, T]\times \R^d$, the parabolic equation
\begin{equation}\label{e6-3}
\begin{cases}
  \frac{\partial}{\partial t}\bar u(t,x)=\bar \LL \bar u(t,\cdot)(x)
  + h(t,x),\quad &(t,x)\in [0,T]\times \R^d,\\
  \bar u(0,x)=g(x)
\end{cases}
\end{equation}
has a unique solution given by
$$\bar u(t,x)= \bar P_tg(x) +\int_0^t \bar P_{t-s}h(s, x)\,ds,\quad (t,x)\in [0,T]\times \R^d,$$ where $\{\bar P_t\}_{t\ge0}$ is the semigroup associated with the L\'evy operator $\bar \LL$;
e.g. see \cite[Chapter 6, Sections 4 and 5]{Fri}.
It follows
from heat kernel estimates for the $\alpha$-stable-L\'evy process (i.e., for the semigroup $\{\bar P_t\}_{t\ge0}$)
 that
  $   C([0,T]; C_c^2(\R^d))\subset \mathscr{S}_{\alpha, g,T}$ for every
     $T>0$ and $g\in C_c^2(\R^d).   $
  On the other hand, it will be proved in Subsection \ref{section5.2} of the Appendix that
  $ \mathscr{S}_{\infty, g,T}$
  is dense in $L^2([0,T]\times \R^d; dt\times dx)$ for  every
  $T>0$ and $g\in C_c^2(\R^d).$ Therefore,
  $\mathscr{S}_{\beta, g,T}$ is dense in $L^2([0,T]\times \R^d; dt\times dx)$ for every $\beta\in [\alpha,\infty]$.

For any $k\ge1$, let $\mu^{ (k)}$ be the normalized counting measure
on $k^{-1}\Z^d$ so that $\mu^{ (k)} ([0, 1)^d)=1$.
Given $T>0$,  $g\in C_c^2 (\R^d)$,
$\beta\in(0,\infty]$ and $h\in \mathscr{S}_{\beta, g,T}
$, let
$u_k^\w\in C^1([0,T];L^2(k^{-1}\Z^d;\mu^{(k)}))$
be a solution to the following equation associated with
$\LL_t^{k,\w}$:
\begin{equation}\label{e6-4}
\begin{cases}
  \frac{\partial}{\partial t} u_k^\w(t,x)=\LL_t^{k,\w} u_k^\w(t,\cdot)(x)
  + h (t,x),\quad & (t,x)\in [0,T]\times k^{-1}\Z^d,\\
  u_k^\w(0,x)=g(x).
\end{cases}
\end{equation}
We
extend  the definition of $u^{\w}_k$ to $u^{\w}_k: [0,T]\times \R^d \to \R$ by setting $u^{\w}_k(t,x)=u^{\w}_k(t,z)$ if $x\in \prod_{1\le i\le d}(z_i,z_i+k^{-1}]$ for
the unique $z:=(z_1,z_2,\cdots, z_d)\in k^{-1}\Z^d$.

\medskip

The following is the main  result of this paper. It  gives the
quantitative homogenization
 result for
  the time-dependent operators $\{\LL_t^\w\}_{t\ge0}$.
Throughout the paper, for $a, b\in \R$, $a\vee b:= \max \{a, b\}$
and $a\wedge b =\min\{a, b\}$.

\begin{theorem}\label{T:1.1}
Assume that {\bf Assumption (H)} holds and that
$d>\alpha$.
 For  $T>0$,   $g\in C_c^2 (\R^d)$,  $\beta\in(0,\infty]$ and $h\in \mathscr{S}_{\beta, g,T}
 $, let
  $\bar u$ and $u^\w_k$ be the solutions to \eqref{e6-3} and \eqref{e6-4}, respectively.
Then, for
a.e. $\w\in \Omega$ and
any
$\gamma>0$, there
are
a constant $C_0>0$ $($which depends on
$T$,
$f$, $g$ and $\gamma)$ and a random variable
$k_0(\w)\ge1$ $($which depends on
$T$ and $\gamma$ but is independent of $f$ and $g)$
such that for all  $k\ge k_0(\w)$,
\begin{equation}\label{t6-1-1aa}
\begin{split}
& \sup_{t\in [0,T]}\|u^{\w}_k(t,\cdot)-\bar u(t,\cdot)\|_{L^2(\R^d;dx)}\\
 &\le C_0\sqrt{\pi\left(k^{\alpha}T\right)}+C_0
 \begin{cases}
\min\left\{k^{-\frac{\alpha(d+2\beta)}{2(2d+\alpha+2\beta)}}(\log^{\frac{\alpha(1+\gamma)}{(4(d-\alpha))\wedge 2d}}k),
\, k^{- (\frac{1-\alpha}{1+\gamma}\wedge \frac{d}{2(1+\gamma)})}\log^{1/2}k\right\},
&\quad \alpha\in (0,1),\\
k^{-\frac{d+2\beta}{2(2d+1+2\beta)}}(\log^{ 1+\frac{1+\gamma}{(4(d-1))\wedge 2d} } k)
,&\quad \alpha=1,\\
k^{-\frac{(2-\alpha)(d+2\beta)}{2(2d+\alpha+2\beta)}}(\log^{\frac{\alpha(1+\gamma)}{(4(d-\alpha))\wedge 2d}} k),
&\quad \alpha\in (1,2),
\end{cases}
\end{split}
\end{equation}
where $\displaystyle \pi(t):=\frac{1}{t}  \int_0^t \left|\sK (s)-\sK \right|^2\,ds$.
\end{theorem}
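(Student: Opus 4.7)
The plan is to proceed by an energy estimate for the difference $v_k^\w := u_k^\w-\bar u$. Subtracting \eqref{e6-3} from \eqref{e6-4} one obtains
\begin{equation*}
\frac{\partial}{\partial t}v_k^\w(t,x)=\LL_t^{k,\w}v_k^\w(t,\cdot)(x)+R_k^\w(t,x),\qquad v_k^\w(0,\cdot)\equiv 0,
\end{equation*}
where the residual is $R_k^\w(t,x):=\big(\LL_t^{k,\w}-\bar\LL\big)\bar u(t,\cdot)(x)$. I would test this equation against $v_k^\w$ in $L^2(k^{-1}\Z^d;\mu^{(k)})$; since $\LL_t^{k,\w}$ is symmetric and its associated Dirichlet form is non-negative, the corresponding term has a favourable sign, and after a Cauchy--Schwarz/Gr\"onwall argument one obtains
\begin{equation*}
\sup_{t\in [0,T]}\|v_k^\w(t,\cdot)\|_{L^2(\R^d;dx)}^{2}\le C\int_0^T\|R_k^\w(s,\cdot)\|_{L^2(\R^d;dx)}^{2}\,ds,
\end{equation*}
so everything reduces to controlling $R_k^\w$. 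The extension of $u_k^\w$ to $\R^d$ given in the paper is exactly what allows one to compare the discrete $L^2$-norm with the continuum one at no cost.

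Next I would decompose
\begin{equation*}
R_k^\w=\underbrace{\big(\LL_t^{k,\w}-\Ee\LL_t^{k,\w}\big)\bar u}_{\text{fluctuation}}\,+\,\underbrace{\big(\Ee\LL_t^{k,\w}-\bar\LL\big)\bar u}_{\text{deterministic bias}}.
\end{equation*}
The deterministic bias splits further into (a) a time-averaging piece produced by replacing $\sK(k^\alpha t)$ by $\sK$, which via Jensen's inequality and \eqref{a1-1-0a} contributes precisely the $\sqrt{\pi(k^\alpha T)}$ term appearing in \eqref{t6-1-1aa}, and (b) a Riemann-sum discretization piece $\sum_{y\in k^{-1}\Z^d}\big(\bar u(t,y)-\bar u(t,x)\big)\sK|x-y|^{-d-\alpha}k^{-d}-\bar\LL\bar u(t,x)$; here I would separate small jumps ($|y-x|\le 1$), where the smoothness $\bar u\in C^3$ forces cancellation up to order $k^{-2}$, from large jumps, where the decay built into $\mathscr{S}_{\beta,g,T}$ guarantees that the Riemann error is $o(1)$ uniformly.

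The heart of the argument is the fluctuation term. Here I would introduce a truncation scale $r=r(k)\in (0,k)$ and write
\begin{equation*}
\big(\LL_t^{k,\w}-\Ee\LL_t^{k,\w}\big)\bar u=I_{\le r}+I_{>r},
\end{equation*}
where $I_{\le r}$ keeps only jumps of size $\le r$ and $I_{>r}$ those larger. For $I_{>r}$, using {\bf Assumption (H)}(ii) one gains the polynomial decay of $\bar u$ (so the weight $(1\vee|x|)^{-d-\beta}$ in $\mathscr{S}_{\beta,g,T}$), and one computes deterministically a bound decaying in $r$ at a polynomial rate driven by $\beta$. For $I_{\le r}$, the coefficients $w_{k^\alpha t,\cdot,\cdot}$ of distinct unordered pairs are independent by {\bf Assumption (H)}(iv); this lets one apply a multi-scale Poincar\'e inequality (dyadic decomposition in space) together with concentration/martingale moment bounds, yielding a fluctuation bound of order $r^{(\alpha/2)\wedge(\alpha-1)}k^{-d/2}$ (with an extra logarithmic factor coming from Borel--Cantelli over $k\in \N$) times the appropriate spatial Sobolev norm of $\bar u$. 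Here the three cases $\alpha<1$, $\alpha=1$, $\alpha>1$ have to be handled separately because the integral $\int_{|y|\le r}|y|^{2-d-\alpha}\,dy$ that governs the small-jump energy scales as $r^{2-\alpha}$ only when $\alpha<2$, but the relevant moment of the increment $\bar u(y)-\bar u(x)$ uses the gradient when $\alpha>1$ and only H\"older continuity when $\alpha<1$, producing the different exponents in \eqref{t6-1-1aa}.

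Balancing $I_{\le r}$ and $I_{>r}$ by optimally tuning $r=k^{s}$ gives the stated $k^{-\alpha(d+2\beta)/(2(2d+\alpha+2\beta))}$-type rates, and a final Borel--Cantelli step upgrades the resulting moment bound along $k\in \N$ into the a.s. statement, with the random threshold $k_0(\w)$ and the logarithmic corrections $\log^{\alpha(1+\gamma)/(4(d-\alpha)\wedge 2d)}k$. The main obstacle I expect to be technical is the construction and sharp form of the multi-scale Poincar\'e inequality for stable-like Dirichlet forms with random coefficients: one has to control spatial averages of $w_{k^\alpha t,kx,ky}$ on cubes of many scales simultaneously while accommodating the non-local kernel $|x-y|^{-d-\alpha}$ and the merely Kolmogorov time regularity from {\bf Assumption (H)}(iii); in particular the threshold $d>\alpha$ enters exactly to make the spatial Poincar\'e-type sums summable. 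The switch at $\alpha=1$ (the critical value for integrability of $|y||y|^{-d-\alpha}$ over $|y|\le r$) is expected, and the alternative bound $k^{-(1-\alpha)/(1+\gamma)\wedge d/(2(1+\gamma))}\log^{1/2}k$ for $\alpha<1$ corresponds to dropping the truncation and using the bounded-variation representation of the operator.
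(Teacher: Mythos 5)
Your plan reduces to controlling the residual $R_k^\w=(\LL_t^{k,\w}-\bar\LL)\bar u$ directly in $L^2$, but this cannot work for $\alpha\ge 1$ and it does not yield the $\beta$-dependent rate even for $\alpha<1$; the missing ingredient is the corrector/two-scale expansion. Concretely, after extracting the drift part of the divergence-form operator one finds that
$$\LL_t^{k,\w}\bar u(t,x)=\widehat\LL_t^{k,\w}\bar u(t,x)+k^{\alpha-1}\bigl\langle\nabla\bar u(t,x),\,V(k^\alpha t,kx)\bigr\rangle,$$
where $V(t,x)=\sum_{z}z|z|^{-d-\alpha}w_{t,x,x+z}$ (as in \eqref{e6-5a}) has mean zero but $O(1)$ fluctuations (or $O(k^{1-\alpha})$ for $\alpha<1$). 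The factor $k^{\alpha-1}$ diverges for $\alpha>1$, so the $L^2$-norm of $R_k^\w$ does not vanish as $k\to\infty$; your small-jump truncation at scale $r$ does not help, since for $\alpha>1$ the governing sum $\sum_{|z|\le r}|z|^{1-d-\alpha}$ is already convergent and the prefactor $k^{\alpha-1}$ is insensitive to $r$. This is visible in the paper's Proposition \ref{p6-3}, whose bound $\max\{k^{-2\theta(1-\alpha)},k^{-\theta d}\}$ for $\LL^{k,\w}_t-\bar\LL_t^k$ is only useful for $\alpha<1$, in contrast to Proposition \ref{p6-2} for the modified (non-divergence) operator $\widehat\LL^{k,\w}_t$ which decays for all $\alpha\in(0,2)$.

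The paper handles the problematic drift by introducing a regional corrector $\phi_m$ solving the parabolic system \eqref{e6-5}, whose right-hand side is exactly $V-\oint V$, and then compares $u_k^\w$ to the two-scale expansion $v_k=\bar u\,\psi_{k^\theta}+k^{-1}\langle\nabla(\bar u\,\psi_{k^\theta}),\phi_{(m+2)(1+\theta)}(k^\alpha t,k\cdot)\rangle$ rather than to $\bar u$ directly; when $\LL_t^{k,\w}$ is applied to $v_k$, the term $k^{\alpha-1}\langle\nabla(\bar u\psi_{k^\theta}),V\rangle$ cancels against $k^{\alpha-1}\langle\nabla(\bar u\psi_{k^\theta}),\partial_t\phi\rangle$ by the corrector equation (see \eqref{e:addo1} and \eqref{t3-1-6}). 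The multi-scale Poincar\'e inequality (Proposition \ref{l2-2}) enters not to bound the fluctuation of $\LL^{k,\w}_t\bar u$ as you propose, but to establish the $L^2$ and energy bounds on the corrector itself (Propositions \ref{p6-1} and \ref{p6-4}), which is what ultimately produces the $\beta$-dependent exponents after choosing the cut-off scale $\theta=(2-\alpha)/(2d+\alpha+2\beta)$ (or its $\alpha\le1$ analogue). Your corrector-free energy estimate coincides with the argument the paper uses only for the secondary bound when $\alpha\in(0,1)$ (equation \eqref{t6-1-7}), giving the rate $k^{-(\frac{1-\alpha}{1+\gamma}\wedge\frac{d}{2(1+\gamma)})}\log^{1/2}k$; the primary rate in all three regimes of $\alpha$ requires the corrector construction, which is absent from your proposal.
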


We point out that   Theorem \ref{T:1.1} above in particular is applicable  to the  time-independent conductance case, and gives the quantitative homogenization result for the semigroups. It complements
the quantitative homogenization results recently obtained in \cite{CCKW3} for the time-independent conductance model in terms of resolvents.

\begin{remark}\label{remark1.2} We  further
make
three
comments on  Theorem \ref{T:1.1}.
\begin{itemize}
\item [(1)] Different from the stochastic homogenization
for nearest neighbor random walks in time-dependent random environments \cite{ACJS,ACS,ABM},
we do not assume in this paper
 any ergodic condition on the random coefficients $\{w_{t,x,y}: t\in \R_+, (x,y)\in E\}$ in
  the time variable $t$.
In fact, according to {\bf Assumption (H)}(iv), for every fixed $(x,y)\in E$,
the collection of random variables $\{w_{t_i,x,y}\}_{i\ge 1}$ may not be independent with each other
for mutually distinct $\{t_i\}_{i\ge 1}\subset \R_+$. See Example \ref{ex1-3} below.

\item [(2)] It is seen from \eqref{t6-1-1aa} that the convergence rates of
the stochastic homogenization for the time-dependent operators $\{\LL_t^\w\}_{t\ge0}$ come from two terms.
The first one is $\pi\left(k^\alpha T\right)$, which
describes the convergence for the expectation of random coefficients $\sK (t)=\Ee\left[w_{t,x,y}\right]$
to $\sK$ as $t\to \infty$, and the
second  term comes from the averaging
 of space variables for random coefficients $\{w_{t,x,y}:t\in \R_+, (x,y)\in E\}$, which is
 of the polynomial decay.
The leading order of the second term is
$\frac{(\alpha\wedge(2-\alpha))(d+2\beta)}{2(2d+\alpha+2\beta)}$ for all $\alpha\in (0,2)$, which is increasing with respect to the parameter $\beta$. Therefore, the faster decay of the test function $h(t,x)$ given in \eqref{e6-3} (that is, the larger $\beta$ in \eqref{e6-3}), the faster rate
in the quantitative stochastic homogenization for time-dependent random
conductance models with stable-like jumps. In particular,
when $\beta=\infty$ and $\sK (t)\equiv \sK $ for all $t\in \R_+$ (which implies $\pi(t)\equiv 0$), the estimate \eqref{t6-1-1aa} is
 reduced to
\begin{equation}\label{e:1.11}\begin{split}
& \sup_{t\in [0,T]}\|u^{\w}_k(t,\cdot)-\bar u(t,\cdot)\|_{L^2(\R^d;dx)}
 \\
&\le C_0
\begin{cases}
\min\left\{k^{-{\alpha/2}}(\log^{\frac{\alpha(1+\gamma)}{(4(d-\alpha))\wedge 2d}}k),
\, k^{- (\frac{1-\alpha}{1+\gamma}\wedge \frac{d}{2(1+\gamma)})} \log^{1/2}k \right\},
&\quad \alpha\in (0,1),\\
k^{-1/2}(\log^{ 1+\frac{1+\gamma}{(4(d-1))\wedge 2d} } k),
&\quad \alpha=1,\\
k^{-(2-\alpha)/2}(\log^{\frac{\alpha(1+\gamma)}{(4(d-\alpha))\wedge 2d}} k),
&\quad \alpha\in (1,2).
 \end{cases}
\end{split}
\end{equation}
This assertion corresponds to the main result in \cite[Theorem 1]{CCKW3} for
time-independent
random conductance models with stable-like jumps.

\item [(3)] Note that in \cite{F,KPP,KPP1,PZ,PZ1} different
kinds
of ergodicity properties for the time variable are assumed, which in turn yield different limit equations. We believe that under
these conditions, the convergence speed would be different from that in Theorem \ref{T:1.1}.
\end{itemize}
\end{remark}

We give some examples that satisfy {\bf Assumption (H)}.

\begin{example}\label{ex1-3}
\begin{enumerate}
\item
Let $\{Z_{(k)}^{(i,(x,y))}: i=1,2, (x,y)\in E, k\in {\mathbb N}_+\}$ be
independent and uniformly bounded
non-negative
random variables with
$\Ee [ Z_{(k)}^{(i,(x,y))}]=1$ for all $i=1,2,$ $(x,y)\in E$ and $k\in {\mathbb N}_+$.
Fix  $(x,y)\in E$. For any $n\ge0$, on the time interval $[n, n+1]$, we define $t\mapsto w_{t,x,y}$
to be
 a linear function on the subintervals
$[n, n+ 1/2]$
and $[n+ 1/2, n+1]$ with  $w_{n,x,y} = Z_{(n+1)}^{(1,(x,y))}$, $w_{n+ 1/2, x,y} = Z_{(n+1)}^{(2,(x,y))}$ and
$w_{n+1,x,y}= Z_{(n+2)}^{(1,(x,y))}$; that is,
\begin{align*}
w_{t,x,y}=\begin{cases}
(2n+1-2t)Z_{(n+1)}^{(1,(x,y))} + (2t-2n)Z_{(n+1)}^{(2,(x,y))} ,\quad & n\le t\le n+1/2,\\
(2n+2-2t)Z_{(n+1)}^{(2,(x,y))}+(2t-(2n+1))Z_{(n+2)}^{(1,(x,y))},\quad & n+1/2\le t< n+1.
\end{cases}
\end{align*}
Then, {\bf Assumption (H)}(i), (ii) and (iv) clearly hold with $\sK(t)\equiv 1$,
 {\bf Assumption (H)}(iii) is because
$
\Ee\left[|w_{t,x,y}-w_{s,x,y}|^2\right]\le c_1|t-s|^2,
$
which can be obtained thanks to the
uniform boundedness
of $\{Z_{(k)}^{(i,(x,y))}: i=1,2, (x,y)\in E, k\in {\mathbb N}_+\}$. Hence, Theorem \ref{T:1.1} holds for the time-dependent process given by
the (time-dependent) random conductance $\{w_{t,x,y}|x-y|^{-d-\alpha}:t\in \R_+, (x,y)\in E\}$. This is a non-trivial degenerate time-dependent
example, when $\Pp (Z_{(k)}^{(i,(x,y))}=0)>0$.
Note that this example has in addition
a finite range dependence property in the sense that for any $t, s\in [0, \infty)$ with $|t-s|\geq 1$,
the sequences $\{w_{s,x,y}: (x, y)\in E\}$ and $\{w_{t,x,y}: (x, y)\in E\}$ are independent.

\medskip

\item In the setting of (1), let
\begin{align*}
w_{t,x,y}=\begin{cases}
\cos^2 ((t-n)\pi) Z_{(n+1)}^{(1,(x,y))} +\sin^2 ((t-n)\pi)  Z_{(n+1)}^{(2,(x,y))} ,\quad & n\le t\le n+1/2,\\
\sin^2 ( (n+1-t) \pi) Z_{(n+1)}^{(2,(x,y))}+\cos^2 ( (n+1-t) \pi)  Z_{(n+2)}^{(1,(x,y))},\quad & n+1/2\le t< n+1.
\end{cases}
\end{align*}
By the same reasoning,  $\{w_{t,x,y}: t\in \R_+, (x,y)\in E\}$  satisfies {\bf Assumption (H)}
 with $\sK(t)\equiv 1$. This example also has an additional finite range dependence property.

\medskip

\item    Let $\{Z^{(i,(x,y))}: i=1,2, (x,y)\in E\}$ be
 independent and uniformly bounded  non-negative
random variables with  $\Ee [ Z^{(i,(x,y))}]=1$ for all $i=1,2$ and $(x,y)\in E$.
Let $f_1$ and $f_2:[0,\infty) \to \R_+$ be global Lipschitz functions satisfying that there are positive constants $K_1$, $K_2$ and $\sK $ so that
$
K_1\le \min\{f_1(t),f_2(t)\}\le \max\{f_1(t),f_2(t)\}\le K_2$ for all $t\in \R_+$
and that
$\displaystyle \lim\limits_{t \to \infty}\frac{1}{t}  \int_0^t
\left|f_1(s)+f_2(s)-\sK \right|^2\,ds=0.
$
Define
$$
w_{t, x, y} = f_1(t) Z^{ (1,(x,y))} + f_2(t)   Z^{(2,(x,y))},\quad t\in \R_+,(x,y)\in E.
$$
Clearly, $\{w_{t,x,y}: t\in \R_+, (x,y)\in E\}$  satisfies {\bf Assumption (H)}
with $\sK(t)=f_1 (t)+f_2(t)$,
This example does not have  a finite range dependence property in general.
\end{enumerate}
\end{example}

There are quite some activities recently in the study of
stochastic processes in space-time random environments.
Andres, Chiarini, Deuschel and Slowik \cite{ACJS}, Andres, Chiarini and Slowik
\cite{ACS} established quenched invariance principle and quenched local limit theorem for (symmetric) nearest
neighbor random walks in space-time ergodic environments under moment conditions on the  time-dependent conductances respectively.
Later, Biskup and Rodriguez \cite{BR} proved quenched invariance principle for this model under
some improved
moment conditions.
Meanwhile,
Kleptsyna, Piatnitski and Popier \cite{KPP, KPP1} investigated the limit for higher order expansion of the solution
to second order non-autonomous parabolic differential equations with divergence form, where the space variable is periodic
and the time variable is ergodic. Piatnitski and Zhizhina \cite{PZ, PZ1} have studied qualitative homogenization
for a class of non-symmetric non-local operators endowed with $L^2$-integrable jumping kernel, periodic
space variables and ergodic time variables. Fehrman \cite{F} studied the stochastic homogenization with space-time ergodic divergence-free drift.
For these non-symmetric models, an extra term will appear in the limit, partly due to different ergodic properties of the time variable. Deuschel and Guo \cite{DG1},
Deuschel, Guo and  Ram\'irez \cite{DGR} have proved quenched invariance principle and quenched local limit theorem
for nearest neighbor random walk in time-dependent balanced random environments, respectively,
where the homogenized process is a Brownian motion.

The convergence rates of scaled equations
are the central issues in the theory of quantitative
homogenization, and have been studied extensively for second-order parabolic operators in divergence form,
with
rapidly oscillating, time-independent or time-dependent, periodic coefficients; see \cite{KLS, GS} and the references therein.
The study of stochastic homogenization
advances significantly,
thanks to
the environment as seen from the particle technique
introduced by
Kipnis and Varadhan \cite{KV}, Kozlov \cite{Ko} and Papanicolaou and Varadhan \cite{PV}
in the static random environment setting.
Yurinski \cite{Yu} firstly established a result
on the convergence rate for symmetric
diffusions in random environments.
In recent years, there have been a great amount of
important progress
in the quantitative stochastic homogenization for linear elliptic or parabolic PDEs in divergence form with random coefficients or discrete elliptic equations associated with nearest-neighbour random walks in random media.
We refer the reader to Armstrong, Kussi and Mourrat \cite{AKM,AKM1,AKM2}, Armstrong and Smart \cite{AS}, Gloria and Otto \cite{Go1,Go2,Go3}
for systematic
account of
 recent development on the theory of quantitative stochastic homogenization (in static environments).
In particular, Armstrong, Bordas and Mourrat \cite{ABM} studied the convergence rate for symmetric parabolic differential equations in
space-time environments satisfying the finite range dependent property.
See \cite{AK,AM,AW,CMOW,DG,FGW,GGM,GM,MOW} and reference therein for various applications of the theory of quantitative stochastic homogenization.

The fractional Laplacian operator or the $\alpha$-stable random walk, as a typical non-local operator or a random walk with long range jumps,
 has attracted a lot of interests.
The corresponding qualitative homogenization problem has been
studied
extensively;
see \cite{CCKW2, CCKW3, CKW20,  CKK, CKW21, FH}. Among them,
the result on quantitative stochastic homogenization was relatively limited.
The convergence rate for scaled elliptic equations associated with $\alpha$-stable-like random conductance models with (time-independent) coefficients was obtained in \cite{CCKW3}; see Remark \ref{remark1.2}(1) above.  However, by our knowledge, no quantitative result was known for parabolic equations with (time-dependent) random
coefficients. The contribution of our main Theorem \ref{T:1.1} is to address this problem.

\begin{remark}
We would like to outline the new ideas and the novelties of our proof for
 Theorem \ref{T:1.1}, though it is partly motivated by that of \cite[Theorem 1]{CCKW3}. The main difference and difficulty come from the following
three facts.
\begin{itemize}
\item [(1)]
As explained in \cite{CCKW3}, due to the heavy tail
phenomenon
of the conductances, i.e., for $t\in \R_+$,
$$ \Ee\left[\sum_{y\in \Z^d}|x-y|^2\frac{w_{t,x,y}}{|x-y|^{d+\alpha}}\right]=\infty,$$
it seems that there
is no global corrector whose derivatives have good tail  estimates at infinity. So we
construct a regional corrector instead.
The equation for regional corrector in the  time-dependent ergodic environment is a parabolic equation,
rather than an elliptic equation in the static random
environment case, so the arguments in \cite{CCKW3} for the time-independent framework are not directly applicable.
Compared with \cite{CCKW3}, some extra efforts
are required to establish the space-time large scale estimate \eqref{p6-1-1} for the regional corrector,
where
the $L^2$-norm in space variable
and
the uniform bound on the time variable
are chosen, and for
the energy term we take an integral with respect to the time variable.
In particular, due to the time-dependence of random
coefficients  $\{w_{t,x,y}: t\in \R_+,(x,y)\in E\}$,
we develop some new method to analyze
the large scale estimates  for the regional corrector
 in both space
and time variables
(see Section \ref{section2-}), as well as to estimate the average of a class of  scaled time-dependent operators
(see Remark \ref{r3-4} below).  These estimates are crucial
to the proof of Theorem \ref{T:1.1}, see Section \ref{section4} for details.

\item [(2)] Uniformly elliptic (non-degenerate) conditions on the random conductances were adopted in \cite{ABM} to study
the convergence rates for symmetric parabolic differential equations in space-time environments, while in this paper we
can still obtain the result on quantitative stochastic homogenization for stable-like random walks in
space-time environments that are endowed with degenerate conductances. The main tools are the large scale
uniform (with respect to the time variable) Poincar\'e inequality \eqref{l2-1-1} and the multi-scale Poincar\'e inequality \eqref{l2-2-1}.
The key ingredients to establish these two inequalities are the uniform time-dependent non-degenerate estimates for the volume
of vertices whose associated conductances have satisfactory lower bounds; see Lemma \ref{l2-0} below. Unlike
the case of static environments in \cite{CCKW3}, we use the Kolmogorov continuous condition (i.e., {\bf Assumption (H)}(iii)) to verify that
in the quenched sense, the set of vertices associated with non-degenerate conductances will not change very fast when the difference of time variables
is small.

\item [(3)]
Besides the time-dependent framework, the
quantitative result
in Theorem \ref{T:1.1} is more general than that in  \cite{CCKW3}, as
here we do not require the solution to the equation \eqref{e6-3}
has
compact support.
(That is, only the case that $\beta=\infty$ is treated in \cite{CCKW3}.)
Due to the property of regional corrector, here we adopt
cut-off arguments and some new ideas are
introduced
 in the proof of Theorem \ref{T:1.1}. In particular, Theorem \ref{T:1.1} reveals that
 the decay behavior of solution
 $\bar u(t,x)$ for the limit equation \eqref{e6-3} may have an essential impact on the convergence rate for the associated scaled equations.
\end{itemize}
\end{remark}

\ \

The rest of this paper is organized as follows. In Section \ref{section2-}, we
derive
 $L^2$-estimates and energy estimates for the solutions to regional parabolic equations associated with the operator $\LL_t^{k,\w}$. In order to obtain them we establish the Poincar\'e inequality and multi-scale Poincar\'e inequality for the time-dependent operator $\LL_t^{k,\w}$ in here.
In Section \ref{section3} we present uniform $L^2$-estimates for the differences among scaled operators $\sL^{k,\w}_t$, as well as their variants, and the limit operator $\bar \sL$. The last section is devoted to the proof of Theorem \ref{T:1.1}.

\section{Parabolic equations associated with the operator $\LL_t^{k,\w}$} \label{section2-}
Throughout this paper, for each $x\in \R^d$ and $R>0$, set $B_R(x):=x+(-R,R]^d$, and
with a little abuse of notation but it should be clear from the context,
we also use $B_R(x)$ to denote
$B_R(x) \cap \Z^d$ or $B_R(x) \cap  (k^{-1}\Z^d)$
for $k\ge 1$. For simplicity, we write $B_R=B_R(0)$.
Recall that $\mu$ denotes the counting measure on $\Z^d$.
For any $k\in \N$, let $\mu^{ (k)}$ be the normalized counting measure on
$k^{-1}\Z^d$ so that $\mu^{ (k)}((0,1]^d)=1$.
For $f: k^{-1}\Z^d \to \R$, set \begin{equation}\label{e:cou-k}\mu^{(k)}(f):=\int_{k^{-1}\Z^d} f(x)\,\mu^{(k)}(dx)=k^{-d}\sum_{x\in k^{-1}\Z^d} f(x).\end{equation}
For any subset $U\subset \Z^d$ and any
function $f: U \to \R^n$ with $n\ge 1$, define
 the normalized integral of $f$ on $U$ as
 \begin{equation}\label{e1-1a}
 \oint_{U}f\,d\mu=\oint f(t,x)\mu(dx)
:=\frac{1}{\mu(U)}\sum_{x\in U} f(x).
\end{equation}
For any $t\in \R_+$, $U\subset \Z^d$ and $f:\R_+\times U\to \R^n$ with  $n\ge 1$, define
$$
\LL^\w_{t,U}f(t,x)
:=\sum_{y\in U:y\neq x}\left(f(t,y)-f(t,x)\right)\frac{w_{t,x,y}(\w)}{|x-y|^{d+\alpha}} ,
\quad {  x\in U },
$$
and
 \begin{align*}
 \E_{t,U}^\w(f(t,\cdot),f(t,\cdot)) &:=\sum_{i=1}^n
\E^\w_{t,U}(f^{(i)}(t,\cdot),f^{(i)}(t,\cdot)):=\frac{1}{2}\sum_{i=1}^n\sum_{x,y\in U: x\neq y}(f^{(i)}(t,x)-f^{(i)}(t,y))^2\frac{w_{t,x,y}(\w)}{|x-y|^{d+\alpha}}.
\end{align*}
It is not difficult to verify that for any $t\in \R_+$, every finite subset $U\subset \Z^d$ and $f:\R_+\times U \to \R^n$,
\begin{equation}\label{e1-3a}
-\int_{U}\left\langle \LL_{t,U}^\w f(t,x), f(t,x)\right\rangle\,\mu(dx)=\mathscr{E}_{t,U}^{\w}(f(t,\cdot),f(t,\cdot)).
\end{equation}
In the rest of this paper, for simplicity of notations sometime we will omit the parameter
$\w$  when there is no danger of confusion.

\medskip

In this section, we consider the time-dependent regional corrector, which is a
solution to a parabolic equation associated with the regional
version $\LL_{t,U}^{k,\w}$ of the operator $\LL_t^{k,\w}$, and  establish
its $H^{\alpha/2}$-bounds by applying Poincar\'e-type inequalities
that are studied in the following subsection.

\subsection{Poincar\'e inequality and multi-scale Poincar\'e inequality}

In this subsection, we consider {\bf Assumption (H')} that is slightly weaker than {\bf Assumption (H)}.
We say  {\bf Assumption (H')}  holds if  the
time-dependent non-negative random variables $\{w_{t,x,y}: t\in \R_+, (x,y)\in E\}$ satisfy
(i), (iii) and (iv) of {\bf Assumption (H)} and have {\bf (H)}(ii) being replaced by the following weaker moment condition

\medskip

 \noindent {\bf (H')}(ii): {\it
there exist constants  $p>1$ and $\tilde  C_1\geq 1 $ so that
\begin{equation}\label{e:2.4}
\Ee  [ w_{t, x, y}^p] \leq \tilde C_1 \quad \hbox{for all } t\in \R_+ \hbox{ and } x\not=y \in \Z^d .
\end{equation}}

As in \cite[Section 2]{CCKW3}, for every $x_1,x_2,y\in \Z^d$, $t\ge0$, $r\ge 1$ and $\gamma>0$, we define
$$B_{r,\gamma}^{t, x_1,x_2}(y)=\left\{z\in B_r(y)\setminus\{x_1,x_2\}: w_{t,x_1,z}\ge \gamma, w_{t,x_2,z}\ge \gamma\right\},$$
which reflects good vertices whose associated conductance have a uniform  lower bound in the ball $B_r(y)$. We need
the following
result on
large scale lower bounds for the volume of $B_{r,\gamma}^{t, x_1,x_2}(y)$
under {\bf Assumption (H')}.

\begin{lemma}\label{l2-0}
Suppose that
{\bf Assumption (H')} holds.
 Then there
exist constants $\lambda_0,\delta_0\in (0,1)$  such that, for a.e.\ $\w\in \Omega$ and
every $\theta>1/d$, there is a random variable $m_0(\w)\ge1$ so that for all $m>m_0(\w)$, $x_1,x_2,y\in B_{2^m}$ with $x_1\neq x_2$, $0\le t \le 2^{m\alpha}T$ and $[m^\theta]\le r \le 2^m$,
\begin{equation}\label{l2-0-1}
\mu(B_{r,\delta_0}^{t,x_1,x_2}(y))\ge \lambda_0\mu(B_r(y)).
\end{equation}
\end{lemma}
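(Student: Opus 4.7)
The plan is to combine a Hoeffding-type concentration at fixed time with a time-discretization argument whose interpolation error is controlled by the Kolmogorov continuity condition \textbf{(H)}(iii), and to sum the resulting failure probabilities via Borel--Cantelli.

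First I establish a uniform pointwise lower bound: H\"older's inequality together with the moment bound in \textbf{(H')}(ii) gives $K_1 \leq \mathbb{E}[w_{t,x,y}] \leq \delta_1 + \tilde C_1^{1/p}\,\mathbb{P}(w_{t,x,y}\geq \delta_1)^{(p-1)/p}$, so choosing $\delta_1 := K_1/2$ yields a constant $c_0>0$ with $\mathbb{P}(w_{t,x,y}\geq \delta_1)\geq c_0$ uniformly in $(t,x,y)$. For fixed $t$, set $\xi_z^t := \mathbf{1}_{\{w_{t,x_1,z}\geq \delta_1,\,w_{t,x_2,z}\geq \delta_1\}}$ for $z\in B_r(y)\setminus\{x_1,x_2\}$. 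When $z_1\neq z_2$ the four unordered pairs $\{x_i,z_j\}_{i,j=1,2}$ are mutually distinct, so \textbf{(H)}(iv) with all times equal to $t$ forces $\{\xi_z^t\}$ to be independent, while the independence of $w_{t,x_1,z}$ and $w_{t,x_2,z}$ gives $\mathbb{E}[\xi_z^t]\geq c_0^2$. Hoeffding's inequality then yields, for some $c_1>0$ and all large $r$,
\begin{equation*}
\mathbb{P}\!\left(\mu(B^{t,x_1,x_2}_{r,\delta_1}(y)) < \tfrac{c_0^2}{2}\mu(B_r(y))\right) \leq \exp(-c_1 r^d).
\end{equation*}

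To promote this estimate from a single time to all $t\in[0,2^{m\alpha}T]$, I invoke Kolmogorov's continuity theorem on \textbf{(H)}(iii): for any $\eta'\in(0,\eta/p)$, there is a continuous modification of $t\mapsto w_{t,x,y}$ together with a random modulus $M_{x,y}\geq 0$ obeying $|w_{t,x,y}-w_{s,x,y}|\leq M_{x,y}|t-s|^{\eta'}$ for $|t-s|\leq 1$, with $\sup_{(x,y)\in E}\mathbb{E}[M_{x,y}^p]<\infty$. A Chebyshev/union bound shows $M_m := \max_{x,y\in B_{2^m}}M_{x,y}\leq 2^{3md/p}$ off an event of probability $O(2^{-md})$; on the complement I take the mesh $h_m := (\delta_1/(4M_m))^{1/\eta'}\geq c\,2^{-Cm}$, so that if $t^*$ is the grid point nearest to $t$ then $|w_{t,x,y}-w_{t^*,x,y}|\leq \delta_1/4$ for all $x,y\in B_{2^m}$, and hence
\begin{equation*}
B^{t^*,x_1,x_2}_{r,\delta_1}(y)\subseteq B^{t,x_1,x_2}_{r,\delta_1/2}(y).
\end{equation*}

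Union-bounding the Hoeffding estimate over $x_1,x_2,y\in B_{2^m}$ and $r\in\{\lceil m^\theta\rceil,\ldots,2^m\}$ costs a factor $2^{O(m)}$, and the time union over the $N_m=2^{O(m)}$ grid points costs another $2^{O(m)}$; the total bound $2^{O(m)}\exp(-c_1 m^{\theta d})$ is summable in $m$ precisely because $\theta d>1$. Borel--Cantelli then supplies a random $m_0(\omega)$ such that for every $m>m_0(\omega)$ the Hoeffding lower bound holds simultaneously at all grid times, and the displayed inclusion transfers it to every $t\in [0,2^{m\alpha}T]$, yielding the claim with $\delta_0:=\delta_1/2$ and $\lambda_0:=c_0^2/2$. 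The main obstacle is that, unlike the static setting of \cite{CCKW3}, \textbf{(H)}(iv) offers no independence in time for a fixed pair, so no genuine concentration can be exploited along the time axis: \textbf{(H)}(iii) is what allows the discrete-to-continuum interpolation, and the mesh $h_m$ must be tuned small enough to keep the interpolation error $\leq \delta_1/4$ yet coarse enough that the grid size $N_m$ stays of an order absorbable by the Hoeffding factor $\exp(-c_1 m^{\theta d})$---precisely the role of the hypothesis $\theta>1/d$.
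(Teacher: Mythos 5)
Your proposal follows essentially the same route as the paper: a uniform lower bound on $\Pp(w_{t,x,y}\ge\delta_1)$ from H\"older's inequality together with (H)(i) and (H')(ii), Hoeffding concentration at each fixed $t$ using the pairwise independence furnished by (H)(iv), a time discretization whose interpolation error is controlled by (H)(iii), and Borel--Cantelli with $\theta d>1$ providing summability. The only material difference is \emph{how} the single-time Hoeffding bound is transferred to all $t\in[0,2^{m\alpha}T]$: you extract a quantitative $p$-th-moment H\"older modulus from the Kolmogorov continuity theorem, whereas the paper takes a very fine dyadic grid of mesh $2^{-mk}$ (with $k$ a large fixed integer), bounds grid increments directly via Markov's inequality and (H)(iii), and then interpolates through a refining sequence of grids combined with mere path continuity from KCT; both implementations are viable.

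There is, however, one technical imprecision you should repair. You assert that for $\eta'<\eta/p$ there is a H\"older modulus $M_{x,y}$ valid for \emph{every} $s,t\ge 0$ with $|t-s|\le 1$ and that $\sup_{(x,y)\in E}\Ee[M_{x,y}^p]<\infty$. Assumption (H)(iii) gives, via KCT, a modulus with a uniformly bounded $p$-th moment on each \emph{fixed} unit time interval, but no stationarity in $t$ is assumed, so the supremum over infinitely many unit windows need not retain finite moments. The fix is to localize: define $M^{(m)}_{x,y}$ as the modulus on the window $[0,2^{m\alpha}T+1]$ (covering the relevant time range at scale $m$), for which $\Ee[(M^{(m)}_{x,y})^p]\lesssim 2^{m\alpha}T$. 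The extra polynomial factor is harmless in your Chebyshev/union step: choosing, say, $\lambda=2^{(2d+\alpha+2)m/p}$ gives $\Pp\bigl(\max_{x,y\in B_{2^m}}M^{(m)}_{x,y}>\lambda\bigr)\lesssim T\,2^{-2m}$, still summable, so the mesh $h_m$ remains of the form $c\,2^{-Cm}$, $N_m$ remains $2^{O(m)}$, and the remainder of your Borel--Cantelli argument carries over unchanged, yielding $\delta_0=\delta_1/2$ and $\lambda_0=c_0^2/2$ as you state.
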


\begin{proof}
 Take
$\delta_0= 1/(4K_1)$,
where $K_1$ is the positive constant given in \eqref{a1-1-0}.
 By {\bf Assumption (H)}(i),
 {\bf (H')}(ii) and the H\"older's inequality,
   it holds that for all $x\not= y\in \Z^d$ and $t\in \R_+$,
\begin{align*}
K_1 &\le \Ee w_{t,x,y}=  \Ee [w_{t,x,y}; w_{t, x, y} \leq
2
\delta_0] +  \Ee [w_{t,x,y}; w_{t, x, y} >
2\delta_0] \\
&\leq
2\delta_0
+ (  \Ee [w_{t,x,y}^p])^{1/p}  \, \Pp( w_{t, x, y} >
( 2\delta_0)^{1/q})
 \leq 2\delta_0 + \tilde C_1^{1/p}  \, \Pp( w_{t, x, y} > 2\delta_0)^{1/q},
 \end{align*}
 where $q= p/(p-1)$.
 This implies that for all $x\not= y\in \Z^d$ and $t\in \R_+$,
$$
\Pp(w_{t,x,y}>2\delta_0)\ge
(K_1/2)^q
\tilde C_1^{-1/(p-1)}  =:p_0.
  $$
Fix $x_1\neq x_2\in \Z^d$ and $t\in \R_+$, and set $\eta_z^{t,x_1,x_2}(\w):=
 \I_{\{w_{t,x_1,z}>2\delta_0,w_{t,x_2,z}>2\delta_0\}}(\w)$
for any $z\in \Z^d \setminus\{x_1, x_2\}$.
By {\bf Assumption (H)}(iv), $\{\eta_z^{t,x_1,x_2}:z\in \Z^d\setminus\{ x_1,  x_2\}\}$ is a sequence of
independent
$\{0,1\}$-valued random variables with $\Pp(\eta_z^{t,x_1,x_2}=1)
\ge p_0^2$. Hence, taking $\lambda_0\in (0,p_0^2/2)$ and using Hoeffding's inequality (e.g.\ see \cite[Theorem 2.16, p.\ 21]{BDR}),
we obtain that for all $x_1,x_2,y\in \Z^d$ with $x_1\neq x_2$, $t\in \R_+$ and $r\ge5$,
\begin{equation}\label{l2-0-2}
\begin{split}
&\Pp\left(\mu\left(B_{r,2\delta_0}^{t, x_1,x_2}(y)\right)\le \lambda_0 \mu(B_r(y))\right)=\Pp\bigg(\sum_{z\in  B_r(y)
\setminus\{ x_1,  x_2\} } \I_{\{w_{t,x_1,z}>2\delta_0,w_{t,x_2,z}>2\delta_0\}}
\le   \lambda_0  \mu(B_r(y))\bigg)\\
&\le
\Pp\bigg(\frac{\sum_{z\in  B_r(y)
\setminus\{ x_1,  x_2\} }
\left(\eta_z^{t,x_1,x_2}-\Ee[\eta_z^{t,x_1,x_2}]\right)}{\mu(B_r(y))}
\le 2 \lambda_0 -p_0^2\bigg)
\le c_1e^{-c_2r^{d}}.
\end{split}
\end{equation} By choosing $c_1$ large enough, we can see that the estimate \eqref{l2-0-2} holds for all $1\le r\le 4$.

Let $k\ge1$ be an integer. For every $m\ge 1$, let
\begin{align*}
t_i^{m}:=i2^{-mk} \quad \hbox{for } 0\le i \le [2^{m(k+\alpha)}T]+1.
\end{align*}
By \eqref{l2-0-2}, we obtain that for every $\theta>1/d$ and $  \lambda_0 \in (0,p_0^2/2)$,
\begin{equation}\label{l2-0-3}
\begin{split}
\sum_{m=1}^\infty\sum_{x_1,x_2,y\in B_{2^m};x_1\neq x_2}\sum_{i=1}^{[2^{m(k+\alpha)}T]+1}\sum_{r=[m^\theta]}^{2^m}
\Pp\left(\mu\left(B_{r,
2\delta_0
}^{t_i^{m}, x_1,x_2}(y)\right)\le \lambda_0 \mu(B_r(y))\right)<\infty.
\end{split}
\end{equation}
On the other hand, choosing $k$ large enough so that $k>\frac{2d+\alpha+p}{\eta}$ (here $\eta>0$ is the positive constant given in \eqref{a1-1-1}),
and using \eqref{a1-1-1} and the Markov inequality,  we arrive at
\begin{align*}
&\sum_{m=1}^\infty\Pp\left(\bigcup_{x,y\in B_{2^{m+1}}}\bigcup_{i=0}^{[2^{m(k+\alpha)}T]+1}\left\{\w\in \Omega: \left|w_{t_i^{m},x,y}(\w)-w_{t_{i+1}^{m},x,y}(\w)\right|>2^{- m}\right\}\right)\\
&\le \sum_{m=1}^\infty\sum_{x,y\in B_{2^{m+1}}}\sum_{i=0}^{[2^{m(k+\alpha)}T]+1}2^{m p}\Ee\left[\left|w_{t_i^{m},x,y}-w_{t_{i+1}^{m},x,y}\right|^p\right]
\le c_3\sum_{m=1}^\infty 2^{-m\left(k(1+\eta)-(k+\alpha+p+2d)\right)}
<\infty.
\end{align*}
This, along with \eqref{l2-0-3} and the Borel-Cantelli lemma, yields that there are an integer $m_1(\w)\ge0$ and a $\Pp$-null set $\Lambda_1$ so that
for every  $\w\notin \Lambda_1$, $m\ge m_1(\w)$, $x_1,x_2,y_1\in B_{2^{m}}$ with $x_1\neq x_2$, $x,y\in B_{2^{m+1}}$, $[m^\theta]\le r \le 2^m$ and $0\le i \le [2^{m(1+\alpha)}T]$,
\begin{equation}\label{l2-0-5}
\mu(B_{r, 2\delta_0 }^{t_i^{m}, x_1,x_2}(y_1))> \lambda_0  \mu(B_r(y_1)),\quad  |w_{t_i^{m},x,y}(\w)-w_{t_{i+1}^{m},x,y}(\w) |\le 2^{-m}.
\end{equation}

Let $\mathscr{C}_m:=\{t_i^m\}_{i=0}^{[2^{m(k+\alpha)}T]+1}$ for all $m\ge1$. Then, for every $m\ge1$ and
$0\le t\le 2^{m\alpha}T$, there exists  a non-random sequence $\{t_n\}_{n\ge0}$ such that the following properties hold:
\begin{itemize}
\item [(i)] $t_0,t_1\in \mathscr{C}_m$ satisfy that $t_1\le t \le t_0$ and $t_0-t_1=2^{-mk}$;
\item [(ii)] $t_n\in \mathscr{C}_{m+n-1}$ and $t_n\le t \le t_n+2^{-(m+n-1)k}$ for every $n\ge 2$;
\item [(iii)] $t_n,t_{n-1}\in \mathscr{C}_{m+n-1}$ (since $\mathscr{C}_m \subset \mathscr{C}_{m+1}$ for all $m\ge1)$, and $t_n-t_{n-1}\le 2^{-(m+n-2)k}$ for every $n\ge 2$.
\end{itemize}
According to all the properties above, for every $m\ge1$, $0\le t\le 2^{m\alpha}T$ and $n\ge 2$, we can find $s_0<s_1\cdots<s_N$ so that
$N\le 2^k$, $s_0=t_{n-1}$, $s_N=t_n$, $s_i\in \mathscr{C}_{m+n-1}$ and $s_{i+1}-s_i=2^{-(m+n-1)k}$ for each $0\le i \le N-1$.
Therefore, for every $\w\notin \Lambda_1$, $m\ge m_1(\w)$ and $x,y\in B_{2^{m+1}}$,
\begin{align}\label{l2-0-6}
 \left|w_{t_n,x,y}(\w)-w_{t_{n-1},x,y}(\w)\right|&\le \sum_{i=0}^{N-1}
 \left|w_{s_{i+1},x,y}(\w)-w_{s_{i},x,y}(\w)\right|\le N\cdot 2^{-(m+n-1)}\le 2^{-(m+n-1-k)},
\end{align}
where the second inequality follows from \eqref{l2-0-5}.

Furthermore, according to \eqref{a1-1-1} and the Kolmogorov continuity theorem (e.g. see \cite[Theorem 39.3]{Bau}), for every $x,y\in \Z^d$, there exists a $\Pp$-null set
$\Lambda_{x,y}\subset \Omega$ such that
\begin{align*}
s\mapsto w_{s,x,y}(\w)\ \text{is\ continuous},\quad \w\notin \Lambda_{x,y}.
\end{align*}
Let $\Lambda_2:=\bigcup_{x,y\in \Z^d}\Lambda_{x,y}$, which is a $\Pp$-null set and satisfies that
\begin{equation}\label{l2-0-4}
s\mapsto w_{s,x,y}(\w)\ \text{is\ continuous},\quad x,y\in \Z^d,\ \w\notin \Lambda_2.
\end{equation}
Therefore we deduce by  \eqref{l2-0-4} and \eqref{l2-0-6} that for every $\w\notin \Lambda:=\Lambda_1 \bigcup \Lambda_2$, $m\ge m_1(\w)$, $0\le t \le 2^{m\alpha}T$ and
$ x,y\in B_{2^{m+1}}$,
\begin{align*}
\left|w_{t,x,y}(\w)-w_{t_0,x,y}(\w)\right|&=\lim_{n \to \infty}\left|w_{t_n,x,y}(\w)-w_{t_0,x,y}(\w)\right|\\
&\le \sum_{n=1}^\infty\left|w_{t_n,x,y}(\w)-w_{t_{n-1},x,y}(\w)\right|\\
&\le 2^{-m}+\sum_{n=2}^\infty 2^k \cdot 2^{-(m+n-1)}\le c_4 2^k 2^{-m}.
\end{align*} This together with \eqref{l2-0-5} gives us that for every $0\le t \le 2^{m\alpha}T$, $x_1,x_2,y\in B_{2^m}$ with $x_1\neq x_2$, $z\in B_{r, 2\delta_0 }^{t_0, x_1,x_2}(y)$ and $[m^\theta]\le r \le 2^m$,
\begin{align*}
w_{t,x_i,z}(\w)\ge w_{t_0,x_i,z}(\w)-c_42^k2^{-m}\ge
2\delta_0 -c_42^k2^{-m},\quad i=1,2.
\end{align*}
Hence, for every $\w\notin \Lambda$, there is $m_2(\w)\ge1$ so that
for every $m\ge m_2(\w)$ and $0\le t \le 2^{m\alpha}T$, there exists a non-random
 $t_0\in \mathscr{C}_m$
such that
\begin{align*}
w_{t,x_i,z}(\w)\ge
\delta_0,
\ |t-t_0|\le 2^{-km},\ \,x_1,x_2,y\in B_{2^m}\ \text{with}\ x_1\neq x_2, \,z\in B_{r,
2\delta_0
}^{t_0, x_1,x_2}(y),\ [m^\theta]\le r \le 2^m,\ i=1,2,
\end{align*}
which in particular means that $B_{r,
2\delta_0
}^{t_0, x_1,x_2}(y)\subset B_{r,
\delta_0
}^{t,x_1,x_2}(y)$.

According to this and \eqref{l2-0-5} again, we know that for every $\w\notin \Lambda$, there exists $m_2(\w)\ge1$ so that for
for every $m\ge m_2(\w)$,
\begin{align*}
\mu(B_{r,
\delta_0
}^{t, x_1,x_2}(y))>  \lambda_0  \mu(B_r(y)),\quad 0\le t\le 2^{m\alpha}T,\ x_1,x_2,y\in B_{2^m}\ \text{with}\ x_1\neq x_2,\ [m^\theta]\le r \le 2^m.
\end{align*}
The proof is complete.
\end{proof}

With the aid of \eqref{l2-1-1}, we can follow the arguments in \cite[Propositions 2.2 and 2.3]{CCKW3} to prove
the following time-dependent Poincar\'e inequality and multi-scale Poincar\'e inequality whose associated
conductances may be degenerate. For
reader's convenience, we will give the details of the  proofs in the Appendix of this paper.

\begin{proposition}\label{l2-1}{\bf(Poincar\'e inequality)} Under
{\bf Assumption (H')},
 there exists a constant $C_1>0$ such that for
 a.e.\ $\omega\in \Omega$ and each $\theta>1/d$, there is $m_0(\w)\ge1$ so that
 for all $m\ge m_0(\w)$, $0\le t\le 2^{m\alpha}T$, $y \in \Z^d$, $[m^\theta]\le r\le 2^m$ and $f:\Z^d \to \R$,
\begin{equation}\label{l2-1-1}
\oint_{B_r(y)}f^2(x)\,\mu(dx)-\bigg(\oint_{B_r(y)}f(x)\,\mu(dx)\bigg)^2\le C_1r^{\alpha-d}\mathscr{E}_{t, B_r(y)}^{\w}(f,f).
\end{equation}
\end{proposition}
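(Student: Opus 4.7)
The plan is to turn the variance on the left into a double sum of squared differences, and then route every pair $(x_1, x_2)\in B_r(y)\times B_r(y)$ through a well-chosen third vertex $z$ so that the conductance $w_{t, x_1, z}$ (and $w_{t, z, x_2}$) is bounded below, allowing us to dominate each difference by a term in the Dirichlet form $\mathscr{E}^\omega_{t, B_r(y)}$. The crucial input is the quenched lower bound on $\mu(B_{r,\delta_0}^{t,x_1,x_2}(y))$ supplied by Lemma~\ref{l2-0}, which holds uniformly in $0\le t\le 2^{m\alpha}T$ and $[m^\theta]\le r\le 2^m$ once $m\ge m_0(\w)$.

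First I would rewrite
\begin{equation*}
\oint_{B_r(y)} f^2\,d\mu -\Big(\oint_{B_r(y)} f\,d\mu\Big)^{\!2}
= \frac{1}{2\,\mu(B_r(y))^2}\sum_{x_1,x_2\in B_r(y)}\bigl(f(x_1)-f(x_2)\bigr)^2.
\end{equation*}
For each pair $(x_1,x_2)$ with $x_1\ne x_2$, introduce an intermediate vertex $z$ and apply $(a+b)^2\le 2a^2+2b^2$ to get
\begin{equation*}
\bigl(f(x_1)-f(x_2)\bigr)^2\le 2\bigl(f(x_1)-f(z)\bigr)^2+2\bigl(f(z)-f(x_2)\bigr)^2.
\end{equation*}
Average both sides over $z\in B_{r,\delta_0}^{t,x_1,x_2}(y)$; by Lemma~\ref{l2-0}, whenever $y\in B_{2^m}$, $r\in[[m^\theta],2^m]$ and $t\in[0,2^{m\alpha}T]$, this good set has cardinality at least $\lambda_0 \mu(B_r(y))$, so the averaging factor is bounded by $1/(\lambda_0\mu(B_r(y)))$.

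Next, for any $z$ in that good set and any $x_i\in B_r(y)$ we have the deterministic inclusion $|x_i-z|\le 2r\sqrt{d}$ together with $w_{t,x_i,z}(\w)\ge \delta_0$, so
\begin{equation*}
\bigl(f(x_i)-f(z)\bigr)^2 \le \frac{(2\sqrt{d}\,r)^{d+\alpha}}{\delta_0}\,\bigl(f(x_i)-f(z)\bigr)^2\,\frac{w_{t,x_i,z}(\w)}{|x_i-z|^{d+\alpha}}.
\end{equation*}
Extending the $z$-sum from $B_{r,\delta_0}^{t,x_1,x_2}(y)$ to all of $B_r(y)\setminus\{x_i\}$ only adds non-negative terms, and the resulting sum over $(x_i,z)\in B_r(y)^2$ is exactly $2\mathscr{E}^{\w}_{t,B_r(y)}(f,f)$; the remaining sum over the spectator variable ($x_2$ for the first term, $x_1$ for the second) contributes a factor $\mu(B_r(y))$. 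Collecting constants and using $\mu(B_r(y))\asymp r^d$, this yields
\begin{equation*}
\sum_{x_1,x_2\in B_r(y)}\bigl(f(x_1)-f(x_2)\bigr)^2\le \frac{C\,r^{d+\alpha}}{\lambda_0\,\delta_0}\,\mu(B_r(y))\,\mathscr{E}^{\w}_{t,B_r(y)}(f,f),
\end{equation*}
and dividing by $2\mu(B_r(y))^2$ gives the claimed $r^{\alpha-d}$ bound.

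The essentially non-trivial step is the second one: having enough intermediate vertices $z$ that are \emph{simultaneously} good for $x_1$ and for $x_2$, uniformly across all admissible $(t,y,r)$ on a set of full probability. This is where the quenched quantitative statement of Lemma~\ref{l2-0} does the heavy lifting; in particular, the Kolmogorov--type continuity condition {\bf (H)}(iii), which is built into Lemma~\ref{l2-0}, is what allows us to transfer the Borel--Cantelli bound from the discrete time grid $\mathscr{C}_m$ to all $t\in[0,2^{m\alpha}T]$, thereby making the averaging argument genuinely uniform in the time variable. The minor technical point of extending from $y\in B_{2^m}$ (as appears in Lemma~\ref{l2-0}) to $y\in\Z^d$ is handled by choosing $m$ large enough that $y\in B_{2^m}$ (noting the ball radius $r\le 2^m$ automatically restricts the relevant range of $y$).
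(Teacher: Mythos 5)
Your approach is essentially the same as the paper's: you invoke Lemma~\ref{l2-0} for the uniform lower bound on $\mu(B_{r,\delta_0}^{t,x_1,x_2}(y))$, route each difference $f(x_1)-f(x_2)$ through a good intermediate vertex $z$, and then dominate the result by the Dirichlet form using $w_{t,x_i,z}\ge\delta_0$ and $|x_i-z|\lesssim r$. The only presentational difference is that you start from the clean identity $\oint f^2-(\oint f)^2=\frac{1}{2\mu(B_r(y))^2}\sum_{x_1,x_2}(f(x_1)-f(x_2))^2$, whereas the paper works with $\int_{B_r(y)}(f-\oint f)^2\,d\mu$ and applies Cauchy--Schwarz to $(\sum_{x_2}(f(x_1)-f(x_2)))^2$; both lead to the same sum of squared pair differences.

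One bookkeeping slip to fix: in your displayed intermediate inequality
\begin{equation*}
\sum_{x_1,x_2\in B_r(y)}\bigl(f(x_1)-f(x_2)\bigr)^2\le \frac{C\,r^{d+\alpha}}{\lambda_0\,\delta_0}\,\mu(B_r(y))\,\mathscr{E}^{\w}_{t,B_r(y)}(f,f),
\end{equation*}
the factor $\mu(B_r(y))$ should not be there. As your own narrative correctly notes, the averaging over $z$ produces a prefactor $1/(\lambda_0\mu(B_r(y)))$, and the sum over the spectator variable contributes a factor $\mu(B_r(y))$; these cancel, leaving $\sum_{x_1,x_2}(f(x_1)-f(x_2))^2\le \frac{C\,r^{d+\alpha}}{\lambda_0\,\delta_0}\,\mathscr{E}^{\w}_{t,B_r(y)}(f,f)$. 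Dividing this corrected bound by $2\mu(B_r(y))^2\asymp r^{2d}$ does give $r^{\alpha-d}$; with the spurious $\mu(B_r(y))$ you would only get $r^{\alpha}$, not the claimed rate. This is a transcription slip rather than a conceptual gap, since the surrounding prose tracks the cancellation correctly.
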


When $n<m$, define
\begin{equation}\label{defzmn}
\Z_{m,n}^d:=\left\{z=(z_1,\cdots,z_d)\in B_{2^m}: z_i=k_i2^n\hbox{ for some odd}\ k_i\in
\Z
\hbox{ for all } 1\le i \le d\right\};
\end{equation}
when $m=n$, define $\Z_{m,m}^d=\{0\}$.

\begin{proposition}\label{l2-2}{\bf(Multi-scale Poincar\'e inequality)}
Under
{\bf Assumption (H')},
there exists a constant $C_2>0$ such that for
 a.e.  $\omega\in \Omega$ and each $\theta>1/d$, there is $m_0(\w)\ge1$ so that
 for all $m\ge m_0(\w)$, $0\le t\le 2^{m\alpha}T$, $y \in \Z^d$, $n\in \N \cap [\theta \log_2 (m \log 2),  m]$ and $f,g:\Z^d \to \R$,
\begin{equation}\label{l2-2-1}
\begin{split}
\sum_{x\in B_{2^m}}f(x)\bigg(g(x)- \oint_{B_{2^m}}g\,d\mu
\bigg)\le&
\sum_{z\in \Z_{m,n}^d}\sum_{x\in B_{2^n}(z)}f(x)\bigg(g(x)- \oint_{B_{2^n}(z)}g\,d\mu
\bigg)\\
&+C_2\mathscr{E}_{t,B_{2^m}}^\w(g,g)^{1/2}\sum_{k=n}^{m-1} 2^{k(d+\alpha)/2}\bigg(\sum_{y\in \Z_{m,k}^d}\bigg( \oint_{B_{2^k}(y)}f\,d\mu
\bigg)^2\bigg)^{1/2}.
\end{split}
\end{equation}
\end{proposition}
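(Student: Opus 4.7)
The plan is to perform a dyadic telescoping of ball averages at nested scales, then combine Cauchy--Schwarz with the single-scale Poincar\'e inequality of Proposition~\ref{l2-1}.

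For each $x\in B_{2^m}$ and each $k\in\{n,\ldots,m\}$, let $z_k(x)$ denote the unique element of $\Z_{m,k}^d$ with $x\in B_{2^k}(z_k(x))$ (so $z_m(x)=0$ and $B_{2^m}(0)=B_{2^m}$), and set $F_k(x):=\oint_{B_{2^k}(z_k(x))}g\,d\mu$. The telescoping identity
$$g(x)-\oint_{B_{2^m}}g\,d\mu=\bigl(g(x)-F_n(x)\bigr)+\sum_{k=n}^{m-1}\bigl(F_k(x)-F_{k+1}(x)\bigr),$$
multiplied by $f(x)$ and summed over $x\in B_{2^m}$ (grouping $x$ by its parent $z_n(x)\in\Z_{m,n}^d$), reproduces the first term on the right-hand side of \eqref{l2-2-1} exactly, leaving correction terms $T_k:=\sum_{x\in B_{2^m}}f(x)(F_k(x)-F_{k+1}(x))$. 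The rest of the proof is to bound each $T_k$.

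Each ball $B_{2^{k+1}}(z)$, $z\in\Z_{m,k+1}^d$, decomposes as the disjoint union of its $2^d$ dyadic children $B_{2^k}(w)$ with $w\in\Z_{m,k}^d\cap B_{2^{k+1}}(z)$; on each child, $F_k$ equals the constant $\oint_{B_{2^k}(w)}g\,d\mu$ and $F_{k+1}$ equals $\oint_{B_{2^{k+1}}(z)}g\,d\mu$. Since $\sum_{x\in B_{2^k}(w)}f(x)=2^{d(k+1)}\oint_{B_{2^k}(w)}f\,d\mu$, applying Cauchy--Schwarz first in the inner $w$-sum and then in the outer $z$-sum produces
$$|T_k|\le 2^{d(k+1)}\biggl(\sum_{z\in\Z_{m,k+1}^d}\sum_{w\in\Z_{m,k}^d\cap B_{2^{k+1}}(z)}\Bigl(\oint_{B_{2^k}(w)}g\,d\mu-\oint_{B_{2^{k+1}}(z)}g\,d\mu\Bigr)^{2}\biggr)^{1/2}\biggl(\sum_{w\in\Z_{m,k}^d}\Bigl(\oint_{B_{2^k}(w)}f\,d\mu\Bigr)^{2}\biggr)^{1/2}.$$
Jensen's inequality bounds each squared difference by $\oint_{B_{2^k}(w)}(g-\oint_{B_{2^{k+1}}(z)}g\,d\mu)^2\,d\mu$, and reassembling the $2^d$ children into $B_{2^{k+1}}(z)$ turns the inner double sum into $2^d\sum_{z}\oint_{B_{2^{k+1}}(z)}(g-\oint_{B_{2^{k+1}}(z)}g\,d\mu)^2\,d\mu$.

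The hypothesis $n\ge\theta\log_2(m\log 2)$ ensures $[m^\theta]\le 2^{k+1}\le 2^m$ throughout $k\in\{n,\ldots,m-1\}$ once $m\ge m_0(\w)$, so Proposition~\ref{l2-1} applies at scale $2^{k+1}$ and bounds each of these integrals by $C_1 2^{(k+1)(\alpha-d)}\mathscr{E}^{\w}_{t,B_{2^{k+1}}(z)}(g,g)$. Since $\{B_{2^{k+1}}(z)\}_{z\in\Z_{m,k+1}^d}$ partitions $B_{2^m}$ disjointly, summing over $z$ collapses the local energies into $\mathscr{E}^{\w}_{t,B_{2^m}}(g,g)$ by monotonicity of $\mathscr{E}^{\w}_{t,U}$ in $U$. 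Assembling all powers of $2$ then yields
$$|T_k|\le C\,2^{(k+1)(d+\alpha)/2}\,\mathscr{E}^{\w}_{t,B_{2^m}}(g,g)^{1/2}\,\biggl(\sum_{w\in\Z_{m,k}^d}\Bigl(\oint_{B_{2^k}(w)}f\,d\mu\Bigr)^{2}\biggr)^{1/2},$$
and summing over $k\in\{n,\ldots,m-1\}$ delivers \eqref{l2-2-1}. The only real obstacle is combinatorial bookkeeping: checking that the nested dyadic decomposition into $\Z_{m,k}^d$-balls is self-consistent across scales (in particular, that every $z\in\Z_{m,k+1}^d$ has exactly $2^d$ children in $\Z_{m,k}^d$ whose balls tile $B_{2^{k+1}}(z)$), and that the lower bound on $n$ places every application of Proposition~\ref{l2-1} within its allowed range $[[m^\theta],2^m]$. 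No probabilistic input beyond the single-scale Poincar\'e inequality is required.
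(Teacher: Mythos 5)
Your proof is correct and takes essentially the same route as the paper: a dyadic telescoping of ball averages, Cauchy–Schwarz and Jensen to control the correction at each scale, and the single-scale Poincar\'e bound applied to the oscillation at the parent scale. The only cosmetic difference is that you invoke Proposition~\ref{l2-1} as a black box at scale $2^{k+1}$ after reassembling the $2^d$ children, whereas the paper re-derives the oscillation bound directly from the good-vertex volume estimate \eqref{l2-1-2} without reassembling; the two amount to the same thing.
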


As mentioned before, under {\bf Assumption (H)} the time-dependent random coefficients $\{w_{t,x,y}: t\in \R_+,(x,y)\in E\}$ may be degenerate.
Hence, similar to \cite[Proposition 2.3]{CCKW3},  the multi-scale Poincar\'e inequality \eqref{l2-2-1}
holds for the time-dependent Dirichlet form $(\E^{\w}_t, \F^{\w})$ with possibly degenerate random conductances.

\begin{remark} \rm
\begin{itemize}
\item[{\rm(i)}]
Throughout the paper, the Kolmogorov continuous condition {\bf Assumption (H)}(iii) is only used in the proof of Lemma \ref{l2-0},
which allows us to pass the estimate \eqref{l2-0-2} from each fixed $t>0$ to all $t$ in a bounded interval.
Lemma \ref{l2-0} itself
   is only used to establish Propositions \ref{l2-1} and \ref{l2-2}.
  If the associated time-dependent random coefficients $\{w_{t,x,y}: t\in \R_+,(x,y)\in E\}$
are bounded from below by a positive constant, then it is relatively easy to get a uniform-in-time Poinacr\'e inequality \eqref{l2-1-1} and so the multi-scale Poincar\'e inequality \eqref{l2-2-1}. In this case,  {\bf Assumption (H)}(iii) is not needed.

\item[{\rm(ii)}]
The results of this subsection hold under {\bf Assumption (H)} but with its
{\bf (H)}(ii)
being replaced by {\bf (H)}(ii') of \eqref{e:2.4}.
In the sequel, we will need {\bf Assumption (H)}(ii).
See Remark \ref{r2-5} for comments how {\bf Assumption (H)}(ii) is used
in the results in the next subsection.
 \end{itemize}
\end{remark}

\subsection{Large scale estimates for correctors}

By applying the multi-scale Poincar\'e inequality \eqref{l2-2-1}, we can establish large scale estimates for correctors.
We first consider the case that
$\alpha\in (1,2)$.
For any $m\ge1$, consider the following system of parabolic equations on $\R_+\times B_{2^m}$:
\begin{equation}\label{e6-5}
\begin{cases}
  \frac{\partial}{\partial t}\phi_m(t,x)=\LL^\w_{t,B_{2^m}} \phi_m(t,\cdot)(x)+V(t,x)-\displaystyle \oint_{B_{2^m}} V(t,\cdot)\,d\mu
,&\quad (t,x)\in \R_+\times B_{2^m},\\
 \phi_m(0,x)=0,&\quad
\end{cases}
\end{equation}
where
\begin{equation}\label{e6-5a}
V(t,x):=\sum_{z\in \Z^d}\frac{z}{|z|^{d+\alpha}}w_{t,x,x+z}
\quad  \hbox{and} \quad
\oint_{B_{2^m}}V(t,\cdot)\,d\mu
:=\frac{1}{\mu(B_{2^m})}\sum_{z\in B_{2^m}}
V(t,z).
\end{equation}
Here and in what follow of this section,
$B_{2^m}$ is understood to be $B_{2^m} \cap \Z^d$.
Then, one can see
that the solution to \eqref{e6-5} is given by
$$
\phi_m(t,x)=\int_0^tP_{s,t,B_{2^m}}\left(V(s,x)-\displaystyle \oint_{B_{2^m}} V(s,\cdot)\,d\mu
\right)\,ds,
$$
where $\{P_{s,t,B_{2^m}}\}_{s\le t}$ is the Markov transition semigroup associated with the  time-dependent generator $\{\LL^\w_{t,B_{2^m}}\}_{t>0}$
on $B_{2^m}\cap \Z^d$;
e.g. see  \cite[Chapter 6, Sections 4 and 5]{Fri}.

\begin{proposition}\label{p6-1} Suppose that $\alpha\in(1,2)$ and $d>\alpha$.
Assume that {\bf Assumption (H)} holds. Let $\phi_m:\R_+\times \Z^d \to \R^d$ be the solution to \eqref{e6-5}.
Then, for
a.e. $\w\in \Omega$,
every $\gamma>0$ and $T>0$, there exist a constant $C_3>0$ $($which is independent of $T)$ and a random variable $m_1(\w)\ge1$ $($which depends on $\gamma$ and $T)$ such that for every $m\ge m_1(\w)$,
\begin{equation}\label{p6-1-1}
\sup_{t\in [0,2^{m\alpha}T]}\int_{B_{2^m}}|\phi_m(t,x)|^2\mu(dx)+
\int_0^{2^{m\alpha}T} \E^\w_{t,B_{2^m}}\left(\phi_m(t,\cdot),\phi_m(t,\cdot)\right) dt\le
C_3m^{\frac{(1+\gamma)\alpha}{(2(d-\alpha))\wedge d}}2^{m(\alpha+d)}T.
\end{equation}
\end{proposition}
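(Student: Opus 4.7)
My plan is to reduce \eqref{p6-1-1} to a quenched uniform-in-time concentration bound on the spatial averages of the centred driving field $W(s,x):=V(s,x)-\oint_{B_{2^m}}V(s,\cdot)\,d\mu$, and then choose the scale in Proposition~\ref{l2-2} so as to balance the local and non-local contributions. First, testing \eqref{e6-5} componentwise against $\phi_m$ and using the symmetry of $\LL^\w_{s,B_{2^m}}$ through \eqref{e1-3a} gives the energy identity
\begin{equation*}
\tfrac12\,\|\phi_m(t,\cdot)\|^2_{L^2(B_{2^m};\mu)}+\int_0^t\E^\w_{s,B_{2^m}}(\phi_m(s,\cdot),\phi_m(s,\cdot))\,ds=\int_0^t\sum_{x\in B_{2^m}}\bigl\langle W(s,x),\phi_m(s,x)\bigr\rangle\,ds.
\end{equation*}
Because $\alpha\in(1,2)$ and $w\le C_1$, the series defining $V$ converges absolutely and $\|W(s,\cdot)\|_\infty$ is uniformly bounded by a deterministic constant. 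For each fixed $s$ and each coordinate $i$, I would then apply Proposition~\ref{l2-2} with $f=W^{(i)}(s,\cdot)$ and $g=\phi^{(i)}_m(s,\cdot)$ at a dyadic scale $n\in[\theta\log_2(m\log 2),m]$ to be optimised later. Cauchy-Schwarz combined with the local Poincar\'e inequality \eqref{l2-1-1} on each ball $B_{2^n}(z)$ bounds the local part by $C\,2^{md/2}2^{n\alpha/2}\E^\w_{s,B_{2^m}}(\phi_m,\phi_m)^{1/2}$, while the multi-scale part is of the form $C\,\E^\w_{s,B_{2^m}}(\phi_m,\phi_m)^{1/2}\sum_{k=n}^{m-1}2^{k(d+\alpha)/2}S_k(s,\w)^{1/2}$, where
\begin{equation*}
S_k(s,\w):=\sum_{y\in\Z^d_{m,k}}\Bigl|\oint_{B_{2^k}(y)}W(s,\cdot)\,d\mu\Bigr|^2.
\end{equation*}

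The main technical step is a quenched bound on $S_k(s,\w)$ uniform in $s\in[0,2^{m\alpha}T]$ and $k\in[n,m-1]$. A direct variance calculation, exploiting {\bf Assumption (H)}(iv) so that $V(s,x)$ and $V(s,x')$ share only the single conductance attached to the pair $(x,x')$ (hence $|\mathrm{Cov}(V^{(i)}(s,x),V^{(j)}(s,x'))|\le C|x-x'|^{-2(d+\alpha-1)}$, a sum finite since $\alpha>1$), together with {\bf Assumption (H)}(ii), yields $\Ee[S_k(s,\cdot)]\le C\,2^{(m-2k)d}$ for each fixed $s$. I would upgrade this to the quenched bound
\begin{equation*}
\sup_{s\in[0,2^{m\alpha}T]}\,\max_{n\le k\le m-1}\,2^{(2k-m)d}\,S_k(s,\w)\le C\,m^{1+\gamma}\quad\text{for every }m\ge m_1(\w),
\end{equation*}
by combining a moment-based concentration inequality for the quadratic form $S_k$ in the independent bounded conductances, a union bound over a discretisation of $[0,2^{m\alpha}T]$ with mesh $2^{-\tau m}$ for $\tau$ sufficiently large, the Kolmogorov continuity {\bf Assumption (H)}(iii) to transfer the estimate from the grid to arbitrary $s$ (exactly in the scheme used to prove Lemma~\ref{l2-0}), and finally Borel-Cantelli in $m$ to absorb the $m^{1+\gamma}$ factor. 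This uniform-in-time concentration is the step I expect to be the main obstacle: the pointwise-in-$s$ variance bound is elementary, but the long time horizon $2^{m\alpha}T$ and the delicacy of bounding a quadratic form force a careful choice of discretisation mesh and of the moment order.

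Finally, inserting this bound into the multi-scale piece and using $d>\alpha$ to sum the geometric series $\sum_{k\ge n}2^{k(\alpha-d)/2}\le C\,2^{n(\alpha-d)/2}$, I obtain, after summing over components,
\begin{equation*}
\Bigl|\sum_{x\in B_{2^m}}\langle W(s,x),\phi_m(s,x)\rangle\Bigr|\le C\,2^{md/2}\bigl(2^{n\alpha/2}+m^{(1+\gamma)/2}\,2^{n(\alpha-d)/2}\bigr)\E^\w_{s,B_{2^m}}(\phi_m,\phi_m)^{1/2}.
\end{equation*}
Integrating in $s\in[0,2^{m\alpha}T]$, applying Cauchy-Schwarz in time and Young's inequality to absorb $\int_0^{2^{m\alpha}T}\E\,ds$, and using $\tfrac12\|\phi_m(t,\cdot)\|^2_{L^2}\le\int_0^t\langle W,\phi_m\rangle\,ds$ from the energy identity to treat the $\sup_t$ term, one arrives at
\begin{equation*}
\sup_{t\in[0,2^{m\alpha}T]}\|\phi_m(t,\cdot)\|^2_{L^2}+\int_0^{2^{m\alpha}T}\E^\w_{s,B_{2^m}}(\phi_m,\phi_m)\,ds\le C\,2^{m(d+\alpha)}T\,\bigl(2^{n\alpha}+m^{1+\gamma}2^{n(\alpha-d)}\bigr).
\end{equation*}
Optimising $n$ by the balance $2^{nd}\sim m^{1+\gamma}$ (admissible because $\theta>1/d$ is free to be chosen slightly above $1/d$) produces a polylogarithmic factor of order $m^{(1+\gamma)\alpha/d}$, which is dominated by $m^{(1+\gamma)\alpha/((2(d-\alpha))\wedge d)}$; this yields \eqref{p6-1-1} and completes the plan.
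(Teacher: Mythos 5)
Your proof takes a genuinely different route from the paper's. The paper never proves a uniform-in-time bound on the discretised scale averages
\[
S_k(s,\w)=\sum_{y\in\Z^d_{m,k}}\Bigl|\oint_{B_{2^k}(y)}V(s,\cdot)\,d\mu\Bigr|^2.
\]
Instead, after integrating the multi-scale Poincar\'e estimate in $t$ and applying Cauchy--Schwarz in time, the paper bounds the single time-integrated quantity $\int_0^{2^{m\alpha}T}S_k(t,\w)\,dt$ by computing its \emph{fourth moment} using the case analysis (a)--(c) for $\Ee[\xi\xi\xi\xi]$, followed by Markov's inequality and Borel--Cantelli. This avoids any exponential concentration and any discretisation of the time interval. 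You instead propose a stronger, genuinely harder intermediate target: a quenched bound on $\sup_{s\le 2^{m\alpha}T}\max_k 2^{(2k-m)d}S_k(s,\w)$, to be obtained via exponential-type (Hanson--Wright/Bernstein) concentration for the quadratic form $S_k$ at each grid time, a union bound over a mesh-$2^{-\tau m}$ time grid, Kolmogorov continuity (as in the proof of Lemma~\ref{l2-0}) to interpolate off the grid, and Borel--Cantelli in $m$. You correctly identify this as the crux and only sketch it. The sketch is plausible and, if completed, would in fact yield a slightly sharper polylogarithmic factor, roughly $m^{(1+\gamma)\alpha/d}$, because your optimal scale is $n\sim\frac{(1+\gamma)\log_2 m}{d}$ rather than the larger $n\sim\frac{(1+\gamma)\log_2 m}{2(1-\theta_0)d}$ forced by the paper's Markov/Borel--Cantelli constraint; you note correctly that this dominates the stated exponent $\alpha/((2(d-\alpha))\wedge d)$. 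The trade-off: the paper's route only requires control of fourth moments of the time-integrated averages (and is therefore closer in spirit to an argument that could tolerate weaker-than-bounded moment hypotheses on $w$), whereas your route leans heavily on the boundedness of $w$ to get sub-Gaussian/sub-exponential tails for the linear and quadratic forms. Two small remarks: (i) in establishing the exponential concentration you should keep track of the fact that $\oint_{B_{2^k}(y)}V$ for different $y$ are not independent but only ``asymptotically decoupled'' through shared long-range edges — a direct Hanson--Wright on the full quadratic form in the $\xi$'s circumvents this; (ii) the choice $n\sim\frac{(1+\gamma)\log_2 m}{d}$ must be reconciled with the constraint $n\ge\theta\log_2(m\log 2)$ in Proposition~\ref{l2-2}, which works by picking $\theta\in(1/d,(1+\gamma)/d)$, consistent with what you wrote.
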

\begin{proof} Write $\phi_m(t,x)= (\phi_m^{(1)}(t,x),\cdots,\phi_m^{(d)}(t,x) )$ and $V(t,x)= (V^{(1)}(t,x),\cdots,V^{(d)}(t,x) )$.
By \eqref{e6-5},
it holds that
$$
\begin{cases}
 \displaystyle\frac{d}{dt}\int_{B_{2^m}}\phi_m(t,x)\,\mu(dx)=
\int_{B_{2^m}}\LL^\w_{t,B_{2^m}}\phi_m(t,\cdot)(x)\,\mu(dx)+\displaystyle\int_{B_{2^m}} \left(V(t,x)-
 \oint_{B_{2^m}}V(t,\cdot)\,d\mu
\right )\,\mu(dx)=0,\quad t\ge0,\\
 \displaystyle\int_{B_{2^m}}\phi_m(0,x)\,\mu(dx)=0.
\end{cases}
$$
 So
 $\displaystyle\int_{B_{2^m}}\phi_m(t,x)\,\mu(dx)=0$ for all
$t\geq 0$. By \eqref{e1-3a},
we have
for all $t\ge0$,
\begin{equation}\label{p6-1-3}
\begin{split}
   \frac{d}{d t}\left(\int_{B_{2^m}}|\phi_m(t,x)|^2\mu(dx)\right)
 =& \, 2\sum_{i=1}^d\int_{B_{2^m}}\langle \LL^\w_{t,B_{2^m}} \phi_m^{(i)}(t,\cdot)(x),\phi_m^{(i)}(t,x)\rangle\mu(dx)\\
 &+
2\int_{B_{2^m}}\left\langle  V(t,x)-
\oint_{B_{2^m}}V(t,\cdot)\,d\mu,
\phi_m(t,x)\right\rangle\mu(dx)\\
 =&\,   -2 \E^\w_{t,B_{2^m}}\left(\phi_m(t,\cdot),\phi_m(t,\cdot)\right) \\
 &+
2\int_{B_{2^m}}\left\langle V(t,x)- \oint_{B_{2^m}}V(t,\cdot)\,d\mu,
\phi_m(t,x)\right\rangle\mu(dx).
\end{split}
\end{equation}

According to \eqref{defzmn}, $B_{2^m}= \cup_{z\in \Z_{m,n}^d}
B_{2^n}(z)$
 and  $| \Z_{m,n}^d|=2^{d(m-n)}$.
By applying  the multi-scale Poincar\'e inequality \eqref{l2-2-1}
with
$f=V^{(i)}(t,\cdot)-\displaystyle
\oint_{B_{2^m}}V^{(i)}(t,\cdot)\,d\mu
$ and $g=\phi_m^{(i)}(t,\cdot)$, and using the fact that $\displaystyle\int_{B_{2^m}}\phi_{m}^{(i)}(t,x)\mu(dx)=0$
for all $t\geq 0$ and   $1\le i \le d$, we have
 for a.e. $\w\in \Omega$, every
$m\ge m_0(\w)$, $0\le t\le 2^{m\alpha}T$ and $\left[\frac{(1+\gamma)\log_2 m}{d}\right]<n \le m$,
\begin{align*}
& \int_{B_{2^m}}\left\langle V(t,x)- \oint_{B_{2^m}}V^{(i)}(t,\cdot)\,d\mu,
\phi_m(t,x)\right\rangle\mu(dx)\\
&\le
\sum_{z\in \Z_{m,n}^d}\int_{B_{2^n}(z)}
\left\langle  V(t,x)-
\oint_{B_{2^m}}V^{(i)}(t,\cdot)\,d\mu,
\phi_m(t,x)- \oint_{B_{2^n}(z)}\phi_m(t,\cdot)\,d\mu
\right\rangle\,\mu(dx)\\
&\quad+c_0 \sum_{i=1}^d\mathscr{E}^\w_{t,B_{2^m}}(\phi^{(i)}_m(t,\cdot),\phi^{(i)}_m(t,\cdot)) ^{1/2}\sum_{k=n}^{m-1} 2^{k(d+\alpha)/2}
\left(\sum_{y\in \Z_{m,k}^d}\left| \oint_{B_{2^k}(y)}V^{(i)}(t,\cdot)\,d\mu -\oint_{B_{2^m}}V^{(i)}(t,\cdot)\,d\mu
\right|^2\right)^{1/2}\\
&\le
\sum_{z\in \Z_{m,n}^d}\int_{B_{2^n}(z)}
\left\langle  V(t,x)- \oint_{B_{2^m}}V^{(i)}(t,\cdot)\,d\mu,
\phi_m(t,x)-\oint_{B_{2^n}(z)}\phi_m(t,\cdot)\,d\mu \right\rangle\,\mu(dx)\\
&\quad+c_1 \mathscr{E}^\w_{t,B_{2^m}}(\phi_m(t,\cdot),\phi_m(t,\cdot)) ^{1/2}\sum_{k=n}^{m-1} 2^{k(d+\alpha)/2}
\left(\sum_{y\in \Z_{m,k}^d}\left|\oint_{B_{2^k}(y)}V(t,\cdot)\,d\mu
-\oint_{B_{2^m}}V(t,\cdot)\,d\mu\right|^2\right)^{1/2}\\
&=  :I_{m,n,1}(t)+I_{m,n,2}(t).
\end{align*}

Let $C_1>0$ be the constant in the
Poincar\'e inequality \eqref{l2-1-1}. Then,
by
Young's inequality, for every
$\left[\frac{(1+\gamma)\log_2 m}{d}\right] < n \le m$
 and
 $0\le t\le 2^{m\alpha}T$,
\begin{equation}\label{p6-1-4a}
\begin{split}
I_{m,n,1}(t)\le & 2C_1 2^{n\alpha}\sum_{z\in \Z_{m,n}^d}\int_{B_{2^n}(z)}\left|V(t,x)-\oint_{B_{2^m}}V(t,\cdot)\,d\mu\right|^2\,\mu(dx)\\
&+\frac{2^{-n\alpha}}{8C_1}\sum_{z\in \Z_{m,n}^d}\int_{B_{2^n}(z)}\left|\phi_m(t,x)- \oint_{B_{2^n(z)}}\phi_m(t,\cdot)\,d\mu
\right|^2\,\mu(dx)\\
\le &c_2 2^{md+n\alpha}+ \frac{1}{8}\sum_{i=1}^d\sum_{z\in \Z_{m,n}^d}\int_{B_{2^n}(z)}\int_{B_{2^n}(z)}(\phi_m^{(i)}(t,x)-\phi_m^{(i)}(t,y))^2\frac{w_{t,x,y}}{|x-y|^{d+\alpha}}\,\mu(dx)\,\mu(dy)\\
\le& c_2 2^{md+n\alpha}+\frac{1}{4} \E^\w_{t,B_{2^m}}\left(\phi_m(t,\cdot),\phi_m(t,\cdot)\right),\end{split}
\end{equation} where in the second inequality we used \eqref{l2-1-1} and the fact that $\sup_{t\in \R_+,x\in \Z^d}|V(t,x)|\le c_3$ due to {\bf Assumption (H)}(ii) and $\alpha\in (1,2)$, and the last inequality follows from the property that $$\sum_{z\in \Z_{m,n}^d} \E^\w_{t,B_{2^n}(z)}(\phi^{(i)}_m(t,\cdot),\phi^{(i)}_m(t,\cdot))
\le\E^\w_{t,B_{2^m}}(\phi^{(i)}_m(t,\cdot),\phi^{(i)}_m(t,\cdot))$$ thanks to $\sum_{z\in \Z_{m,n}^d}B_{2^n}(z)=B_{2^m}.$

On the other hand, by the Cauchy-Schwarz inequality,  for every $T>0$,
\begin{equation}\label{p6-1-4}
\begin{split}
&\int_0^{2^{m\alpha}T}
I_{m,n,2}(t)\,dt\\
&=c_1\sum_{k=n}^{m-1} 2^{k(d+\alpha)/2}\cdot  \int_0^{2^{m\alpha}T}  \mathscr{E}^\w_{t,B_{2^m}}(\phi_m(t,\cdot),\phi_m(t,\cdot)) ^{1/2}
\left(\sum_{y\in \Z_{m,k}^d}\left|\oint_{B_{2^k}(y)}V(t,\cdot)\,d\mu
-\oint_{B_{2^m}}V(t,\cdot)\,d\mu\right|^2\right)^{1/2}\,dt \\
&\le c_1\sum_{k=n}^m 2^{k(d+\alpha)/2}
\left(\int_0^{2^{m\alpha}T}  \mathscr{E}^\w_{t,B_{2^m}}(\phi_m(t,\cdot),\phi_m(t,\cdot)) \,dt\right)^{1/2}\\
&\quad\times
\left(\int_0^{2^{m\alpha}T}\sum_{y\in \Z_{m,k}^d}
\left|\oint_{B_{2^k}(y)}V(t,\cdot)\,d\mu
-\oint_{B_{2^m}}V(t,\cdot)\,d\mu\right|^2 \,dt\right)^{1/2}\\
&\le \sqrt{2}c_1\sum_{k=n}^m 2^{k(d+\alpha)/2}
\left(\int_0^{2^{m\alpha} T}  \mathscr{E}^\w_{t,B_{2^m}}(\phi_m(t,\cdot),\phi_m(t,\cdot)) \,dt\right)^{1/2}\\
&\quad \times
\left(\int_0^{2^{m\alpha} T}\sum_{y\in \Z_{m,k}^d}
\left(
\left|\oint_{B_{2^k}(y)}V(t,\cdot)\,d\mu\right|^2+\left|\oint_{B_{2^m}}V(t,\cdot)\,d\mu\right|^2\right) \,dt\right)^{1/2}.
\end{split}
\end{equation}

For every
$1\le k \le m$,
\begin{align*}
&\int_0^{2^{m\alpha}T}\sum_{y\in \Z^d_{m,k}}\left|\oint_{B_{2^k}(y)}V(t,\cdot)\,d\mu\right|^2\,dt\\
&=2^{-2kd}\sum_{i=1}^d\sum_{y\in \Z^d_{m,k}}\int_0^{2^{m\alpha}T}\sum_{z_1,z_2\in B_{2^k}(y)}V^{(i)}(t,z_1)V^{(i)}(t,z_2)\,dt\\
&=2^{-2 kd}\sum_{i=1}^d\sum_{y\in \Z^d_{m,k}}\int_0^{2^{m\alpha}T}\sum_{z_1,z_2\in B_{2^k}(y)}
\sum_{z_3,z_4\in \Z^d}\frac{(z_1-z_3)^{(i)}}{|z_1-z_3|^{d+\alpha}}
\frac{(z_2-z_4)^{(i)}}{|z_2-z_4|^{d+\alpha}}\xi_{t,z_1,z_3}\xi_{t,z_2,z_4}\,dt,
\end{align*}
where
$\xi_{t,x,y}:=w_{t,x,y}-\sK (t)$
for all $t\in \R_+$ and $x\neq y\in \Z^d$, and  in the last equality we used the fact
that
$$
V(t,x)=\sum_{z\in \Z^d}\frac{z}{|z|^{d+\alpha}}w_{t,x,x+z}=\sum_{z\in \Z^d}\frac{z}{|z|^{d+\alpha}}
\left(w_{t,x,x+z}-\Ee\left[w_{t,x,x+z}\right]\right)=\sum_{z\in \Z^d}\frac{z}{|z|^{d+\alpha}}\left(w_{t,x,x+z}-\sK (t)\right).
$$
Therefore,
\begin{equation}  \label{e:2.19}
\begin{split}&\Ee\left[\left(\sum_{y\in \Z^d_{m,k}}\int_0^{2^{m\alpha}T}\left|\oint_{B_{2^k}(y)}V(t,\cdot)\,d\mu\right|^2dt\right)^2\right]
\\
&= 2^{-4 kd}\sum_{i,i'=1}^d\sum_{y,y'\in \Z^d_{m,k}}\int_0^{2^{m\alpha}T}\int_0^{2^{\alpha m}T}
\sum_{z_1,z_2\in B_{2^k}(y)}\sum_{z_1',z_2'\in B_{2^k}(y')}
\sum_{z_3,z_4,z_3',z_4'\in \Z^d}
\frac{(z_1-z_3)^{(i)}(z_1'-z_3')^{(i')}}{(|z_1-z_3||z_1'-z_3'|)^{d+\alpha}}\\
& \qquad\qquad\qquad\qquad\qquad\qquad\qquad\qquad \times
\frac{(z_2-z_4)^{(i)}(z_2'-z_4')^{(i')}}{(|z_2-z_4||z_2'-z_4'|)^{d+\alpha}}
\Ee\left[\xi_{t,z_1,z_3}\xi_{t,z_2,z_4}\xi_{t',z_1',z_3'}\xi_{t',z_2',z_4'}\right]\,dt\,dt'.
\end{split}\end{equation}

Note  that $\Ee[\xi_{t,x,y}]=0$ for all $t\in \R_+$ and $(x,y)\in E$. According to {\bf Assumption (H)}(iv), it is easy to see that
$$
\Ee\left[\xi_{t,z_1,z_3}\xi_{t,z_2,z_4}\xi_{t',z_1',z_3'}\xi_{t',z_2',z_4'}\right]\neq 0
$$
only if in one of three cases:
\begin{itemize}
\item [(a)]   $(z_1,z_3)=(z_2,z_4)$, $(z_1',z_3')=(z_2',z_4')$;

\medskip

\item [(b)]  $(z_1,z_3)=(z_1',z_3')$, $(z_2,z_4)=(z_2',z_4')$;

\medskip

\item [(c)] $(z_1,z_3)=(z_2',z_4')$, $(z_2,z_4)=(z_1',z_3')$.
\end{itemize}
Note that in cases (b) and (c), $y$ has to be the same as $y'$ in the sum on the right hand side of \eqref{e:2.19}.
Hence,
\begin{align*}
& \Ee\left[\left(\sum_{y\in \Z^d_{m,k}}\int_0^{2^{m\alpha} T}\left|\oint_{B_{2^k}(y)}V(t,\cdot)\,d\mu\right|^2dt\right)^2\right]\le c_3
 \sum_{l=1}^2 J_{l,k,m},
\end{align*}
where
\begin{align*}
J_{1,k,m}:=& 2^{-4kd}\sum_{i,i'=1}^d\sum_{y,y'\in \Z^d_{m,k}}\int_0^{2^{m\alpha} T}
\int_0^{2^{m\alpha} T}\sum_{(z_1,z_3)\in E: z_1\in B_{2^k}(y)}
\sum_{(z_1',z_3')\in E: z_1'\in B_{2^k}(y')}\frac{|(z_1-z_3)^{(i)}|^2}{|z_1-z_3|^{2(d+\alpha)}}\\
&\qquad\qquad\qquad\qquad\qquad\qquad\qquad\qquad\qquad\qquad\times  \frac{|(z_1'-z_3')^{(i')}|^2}{|z_1'-z_3'|^{2(d+\alpha)}}
\Ee\left[|\xi_{t,z_1,z_3}|^2  \, |\xi_{t',z_1',z_3'}|^2\right]
\,dt\,dt'
\end{align*}
is the upper bound for case (a),  and
\begin{align*}
J_{2,k,m}:=& 2^{-4 kd }\sum_{i,i'=1}^d\sum_{y\in \Z^d_{m,k}}\int_0^{2^{m\alpha} T}
\int_0^{2^{m\alpha} T}\sum_{(z_1,z_3)\in E: z_1\in B_{2^k}(y)}
\sum_{(z_2,z_4)\in E: z_2\in B_{2^k}(y)}\\
&\qquad\times \frac{|(z_1-z_3)^{(i)}||(z_1-z_3)^{(i')}|}{ |z_1-z_3| ^{2(d+\alpha)}} \frac{|(z_2-z_4)^{(i)}||(z_2-z_4)^{(i')}|}{ |z_2-z_4| ^{2(d+\alpha)}}
\Ee\left[|\xi_{t,z_1,z_3}||\xi_{t,z_2,z_4}||\xi_{t',z_1,z_3}||\xi_{t',z_2,z_4}|\right]\,dt\,dt'
\end{align*}
is the combined bound for cases (b) and (c).
Since $|\xi_{t,x,y}|\le c_4$ for all $t\in \R_+$ and  $x,y\in \Z^d$ as well as
$d+2\alpha>2$ (this is always true because $\alpha\in (1,2)$),
we have for every
$1\le k \le m$,
\begin{align*}
J_{1,k,m}&\le c_52^{-4kd}2^{2m\alpha}T^2
\sum_{y,y'\in \Z^d_{m,k}}\sum_{(z_1,z_3)\in E: z_1\in B_{2^k}(y)}
\sum_{(z_1',z_3')\in E: z_1'\in B_{2^k}(y')}\frac{1}{(|z_1-z_3||z_1'-z_3'|)^{2(d+\alpha-1)}}\\
&\leq c_52^{-4kd}2^{2m\alpha}T^2 \,  \Big(  \sum_{z_1\in B_{2^m}}  \sum_{z_3 \in \Z^d: (z_1, z_3)\in E} \frac{1}{ |z_1-z_3|^{2(d+\alpha-1)}} \Big)^2
 \\
&\le   c_6  2^{-4kd+2md+2 m\alpha}T^2
\end{align*} and
\begin{align*}
J_{2,k,m}
&\le c_5 2^{-4kd} {2^{2m\alpha}T^2 } \sum_{y\in \Z^d_{m,k}}
\sum_{(z_1,z_3)\in E: z_1\in B_{2^k}(y)}
\sum_{(z_2,z_4)\in E: z_2\in B_{2^k}(y)}\frac{1}{(|z_1-z_3||z_2-z_4|)^{2(d+\alpha-1)}}\\
&\le c_6 2^{-4kd}   \, 2^{2 m\alpha} T^2 \, 2^{(m-k)d} 2^{2kd}
\le c_6 2^{-3kd+md+2m\alpha}T^2
\le c_62^{-4kd+2md+2 m\alpha}T^2.
\end{align*}
Putting all the estimates together yields that
for all $\left[\frac{(1+\gamma)\log_2 m}{d}\right]<k\le m$,
\begin{align*}
&
\Ee\left[\left(\sum_{y\in \Z_{m,k}^d}\int_0^{2^{m\alpha }T}\left|\oint_{B_{2^k}(y)}V(t,\cdot)\,d\mu\right|^2dt\right)^2\right]\le
c_72^{-2kd+2((m-k)d+m\alpha)}T^2.
\end{align*}

By the Markov inequality, we can derive that for every
$\theta_0\in ({\alpha}/{d},1)$
(note that we used $d>\alpha$ here)
and $\gamma>0$,
\begin{align*}
&\Pp\left(\bigcup_{m\ge 1}\bigcup_{[\frac{(1+\gamma)\log_2 m}{2(1-\theta_0)d}]\le k \le m}\left\{\left|
\sum_{y\in \Z_{m,k}^d}\int_0^{2^{m\alpha }T}\left|\oint_{B_{2^k}(y)}V(t,\cdot)\,d\mu\right|^2dt\right|>2^{m\alpha+(m-k)d-\theta_0 k d}T\right\}\right)\\
&\le \sum_{m=1}^\infty\sum_{k=[\frac{(1+\gamma)\log_2 m}{2(1-\theta_0)d}]}^m2^{2\theta_0 k d-2m\alpha-2(m-k)d}T^{-2}
\Ee\left[\left(\sum_{y\in \Z_{m,k}^d}\int_0^{2^{m\alpha }T}\left|\oint_{B_{2^k}(y)}V(t,\cdot)\,d\mu\right|^2dt\right)^2\right]\\
&\le c_7\sum_{m=1}^\infty \sum_{k=[\frac{(1+\gamma)\log_2 m}{2(1-\theta_0)d}]}^m 2^{-2k(1-\theta_0)d}\le c_8\sum_{m=1}^\infty m^{-(1+\gamma)}<\infty.
\end{align*}
This along with Borel-Cantelli's lemma yields that
for a.e. $\w\in \Omega$,
there is $m^*(\w)\ge 1$ such that for all $m\ge m^*(\w)$ and $[\frac{(1+\gamma)\log_2 m}{2(1-\theta_0)d}]\le
k\le m$,
\begin{equation}\label{p6-1-5--}
\sum_{y\in \Z_{m,k}^d}\int_0^{2^{m\alpha }T}\left|\oint_{B_{2^k}(y)}V(t,\cdot)\,d\mu\right|^2dt\le 2^{m\alpha+(m-k)d-\theta_0 k d}T.
\end{equation}

Following the same argument as above, for a.e. $\w\in \Omega$ we also can find  $m^{**}(\w)\ge 1$ so that for all
$m\ge m^{**}(\w)$ and
$[\frac{(1+\gamma)\log_2 m}{2(1-\theta_0)d}]\le
k\le m$,
\begin{equation}\label{p6-1-5}
\int_0^{2^{m\alpha}T}\left|\oint_{B_{2^m}}V(t,\cdot)\,d\mu\right|^2dt
\le 2^{m\alpha-\theta_0 m d}T.
\end{equation}
Putting \eqref{p6-1-5--} and \eqref{p6-1-5} into \eqref{p6-1-4},
we find that
for all
$n\ge \left[\frac{(1+\gamma)\log_2 m}{2(1-\theta_0)d}\right]$ and
$m\ge m_0(\w):=\max\{m^*(\w),m^{**}(\w)\},$
\begin{align*}
\int_0^{2^{m\alpha}T}I_{m,n,2}(t)\,dt
&\le c_9T^{1/2}\left(\sum_{k=[\frac{(1+\gamma)\log_2 m}{2(1-\theta_0)d}]}^m
2^{\frac{k(d+\alpha)+m\alpha+(m-k)d-\theta_0 k d}{2}}\right)\left(\int_0^{2^{m\alpha}T} \E^\w_{t,B_{2^m}}
\left(\phi_m(t,\cdot), \phi_m(t,\cdot)\right) dt\right)^{1/2}\\
&\le c_{10}T^{1/2}2^{\frac{md+m\alpha}{2}}\left(\sum_{k=1}^\infty 2^{-\frac{k(\theta_0 d-\alpha)}{2}}\right)
\left(\int_0^{2^{m\alpha}T} \E^\w_{t,B_{2^m}}
\left(\phi_m(t,\cdot), \phi_m(t,\cdot)\right) dt\right)^{1/2}\\
&\le c_{11}2^{m\alpha+md}T+\frac{1}{4}\int_0^{2^{m\alpha}T} \E^\w_{t,B_{2^m}}
\left(\phi_m(t,\cdot), \phi_m(t,\cdot)\right) dt,
\end{align*}
where in the last inequality we have used the fact that $\theta_0\in ({\alpha}/{d},1)$.

Combining the estimate above with \eqref{p6-1-4a} and \eqref{p6-1-3} and taking
$n=\max\left(\left[\frac{(1+\gamma)\log_2 m}{2(1-\theta_0)d}\right],\left[\frac{(1+\gamma)\log_2 m}{d}\right]\right)$,
we finally obtain that
\begin{align*}
\quad & \sup_{t\in [0,T]}\int_{B_{2^m}}|\phi_m(2^{m\alpha}t,x)|^2\mu(dx)\le -
\int_0^{2^{m\alpha}T} \E^\w_{t,B_{2^m}}\left(\phi_m(t,\cdot), \phi_m(t,\cdot)\right) dt+c_{12}m^{\frac{(1+\gamma)\alpha}{(2(1-\theta_0)d)\wedge d}}2^{m\alpha+md}T,
\end{align*}
where we used the fact that $\phi_m(0,x)=0$.
Taking
$\theta_0\in ({\alpha}/{d},1)$ close to ${\alpha}/{d}$ and changing $\gamma$ small enough if necessary,
we  get the desired conclusion \eqref{p6-1-1}.
\end{proof}

Next, we turn to the case that $\alpha\in (0,1]$. In this case, for any $m\ge1$  we consider the
following system of parabolic equations on $\R_+\times B_{2^m}$:
\begin{equation}\label{e6-7}
\begin{cases}
  \frac{\partial}{\partial t}\widetilde\phi_m(t,x)=\sL^\w_{t,B_{2^m}} \widetilde \phi_{m}(t,\cdot)(x)+V_m(t,x)
-
  \displaystyle \oint_{B_{2^m}} V_m(t,\cdot)\,d\mu,\quad  & (t,x)\in \R_+\times B_{2^m},\\
  \widetilde \phi_{m} (0,x)=0,
\end{cases}
\end{equation}
where
\begin{equation}\label{e6-7a}
V_m(t,x):=\sum_{z\in \Z^d: |z|\le 2^ m} \frac{z}{|z|^{d+\alpha}}w_{t,x,x+z}(\w).
\end{equation}

\begin{proposition}\label{p6-4}
Suppose that $\alpha\in (0,1]$ and  $d>\alpha$. Assume that  {\bf Assumption (H)} holds.  Let
$\widetilde \phi_m:\R_+\times
B_{2^m}\to \R^d$ be the solution to \eqref{e6-7}. Then, for
a.e. $\w\in \Omega$,
any $ \gamma>0$ and $T>0$, there exist a constant $C_4>0$ $($which is independent of $T)$ and a random variable $m_2(\omega)\ge1$ $($which
depends on $\gamma$ and $T$$)$ such that for every $m\ge m_2(\w)$,
\begin{equation}\label{p6-4-1}
\begin{split}
&\sup_{t\in [0,2^{m\alpha}T]}\int_{B_{2^m}}|\widetilde \phi_m(t,x)|^2\,\mu(dx)+ \int_0^{2^{m\alpha}T}
 \mathscr{E}^\w_{t,B_{2^m}}(\widetilde \phi_m(t,\cdot),\widetilde \phi_m(t,\cdot)) \,dt\\
 &\le C_4
\begin{cases}
m^{2+\frac{1+\gamma}{(2(d-1))\wedge d}}2^{m(d+1)}T,
&\quad \alpha=1,\\
m^{\frac{\alpha(1+\gamma)}{(2(d-\alpha))\wedge d}}2^{m(d+2(1-\alpha)+\alpha)}T,
 &\quad \alpha\in (0,1).\end{cases}\end{split}
\end{equation}
\end{proposition}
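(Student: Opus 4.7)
The plan is to follow the structure of the proof of Proposition \ref{p6-1} line by line, with $V$ replaced by the truncated quantity $V_m$ defined in \eqref{e6-7a}. The new feature in the range $\alpha\in(0,1]$ is that $V_m$ is no longer uniformly bounded in $m$; instead Assumption (H)(ii) yields the pointwise estimate
\begin{equation*}
\sup_{t\in\R_+,\,x\in\Z^d}|V_m(t,x)|\le C\sum_{z\in\Z^d:\,0<|z|\le 2^m}|z|^{1-d-\alpha}=:A_m,
\end{equation*}
and an elementary computation gives $A_m\asymp m$ when $\alpha=1$ and $A_m\asymp 2^{m(1-\alpha)}$ when $\alpha\in(0,1)$. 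It is precisely the appearance of $A_m^2$ in the energy estimate that produces the extra factors $m^2$ and $2^{2m(1-\alpha)}$ in \eqref{p6-4-1}.

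First I would integrate \eqref{e6-7} against $\mu$ to check that $\int_{B_{2^m}}\widetilde\phi_m(t,x)\,\mu(dx)=0$ for all $t\ge 0$, and then derive, exactly as in \eqref{p6-1-3}, the identity
\begin{equation*}
\frac{d}{dt}\int_{B_{2^m}}|\widetilde\phi_m(t,x)|^2\,\mu(dx)=-2\,\mathscr{E}^\w_{t,B_{2^m}}(\widetilde\phi_m(t,\cdot),\widetilde\phi_m(t,\cdot))+2\int_{B_{2^m}}\Big\langle V_m(t,x)-\oint_{B_{2^m}}V_m(t,\cdot)\,d\mu,\,\widetilde\phi_m(t,x)\Big\rangle\mu(dx).
\end{equation*}
Applying Proposition \ref{l2-2} componentwise to the inner product on the right (with $f=V_m(t,\cdot)-\oint_{B_{2^m}}V_m(t,\cdot)\,d\mu$ and $g=\widetilde\phi_m(t,\cdot)$) and splitting the resulting bound as $\widetilde I_{m,n,1}(t)+\widetilde I_{m,n,2}(t)$, the local piece can be controlled exactly as in \eqref{p6-1-4a} via Young's inequality and the Poincar\'e inequality \eqref{l2-1-1}, the only change being that $\sup_{t,x}|V_m(t,x)|^2$ is now of size $A_m^2$ rather than a constant. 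This yields
\begin{equation*}
\widetilde I_{m,n,1}(t)\le c\,A_m^2\,2^{md+n\alpha}+\tfrac{1}{4}\,\mathscr{E}^\w_{t,B_{2^m}}(\widetilde\phi_m(t,\cdot),\widetilde\phi_m(t,\cdot)).
\end{equation*}

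For the nonlocal piece $\widetilde I_{m,n,2}(t)$, Cauchy-Schwarz as in \eqref{p6-1-4} reduces matters to bounding $\sum_{y\in\Z_{m,k}^d}\int_0^{2^{m\alpha}T}|\oint_{B_{2^k}(y)}V_m(t,\cdot)\,d\mu|^2\,dt$ and its $B_{2^m}$-counterpart. Using $V_m(t,x)=\sum_{0<|z|\le 2^m}|z|^{-d-\alpha}\,z\,\xi_{t,x,x+z}$ with $\xi_{t,x,y}=w_{t,x,y}-\sK(t)$ and expanding the square, the fourth-moment calculation reduces, by Assumption (H)(iv), to the same three pairing cases (a), (b), (c) that appear in the proof of Proposition \ref{p6-1}. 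The only modification is that the inner sums over jump vectors are now truncated to $|z_3-z_1|,|z_4-z_2|\le 2^m$; this introduces at most a factor of $S_m^2$ with $S_m:=\sum_{0<|z|\le 2^m}|z|^{-2(d+\alpha-1)}$, and in both cases $\alpha=1$ and $\alpha\in(0,1)$ one verifies $S_m^2\le c\,A_m^2$ after summing over the space variable. Hence the same structural bound $c\,A_m^2\,2^{-4kd+2md+2m\alpha}T^2$ as in Proposition \ref{p6-1} is obtained, and a Borel-Cantelli argument identical to \eqref{p6-1-5--}--\eqref{p6-1-5} gives, for a.e.\ $\w$ and all large $m$ and $[\tfrac{(1+\gamma)\log_2 m}{2(1-\theta_0)d}]\le k\le m$,
\begin{equation*}
\sum_{y\in\Z_{m,k}^d}\int_0^{2^{m\alpha}T}\Big|\oint_{B_{2^k}(y)}V_m(t,\cdot)\,d\mu\Big|^2dt\le A_m\,2^{m\alpha+(m-k)d-\theta_0 kd}T,
\end{equation*}
together with the corresponding bound for the $B_{2^m}$ term.

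Substituting these bounds back into the estimate of $\int_0^{2^{m\alpha}T}\widetilde I_{m,n,2}(t)\,dt$, choosing $n=\max([\tfrac{(1+\gamma)\log_2 m}{2(1-\theta_0)d}],[\tfrac{(1+\gamma)\log_2 m}{d}])$ and $\theta_0\in(\alpha/d,1)$ close to $\alpha/d$, and using Young's inequality, absorbs the $\tfrac{1}{4}$-energy terms into the left-hand side of the integrated ODE and produces a bound of the order $A_m^2\,2^{m(d+\alpha)}T$ multiplied by a polylogarithmic factor $m^{\alpha(1+\gamma)/((2(d-\alpha))\wedge d)}$; this matches the two cases of \eqref{p6-4-1} once $A_m^2\asymp m^2$ is inserted for $\alpha=1$ and $A_m^2\asymp 2^{2m(1-\alpha)}$ for $\alpha\in(0,1)$. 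The main technical obstacle will be the careful bookkeeping of the truncation-induced factors $A_m$ and $S_m$ throughout the variance calculation and the Borel-Cantelli summability check, and verifying that the cutoff $|z|\le 2^m$ built into $V_m$ is exactly sharp enough to keep the multi-scale Poincar\'e cascade effective in the regime $\alpha\in(0,1]$, where the unrestricted kernel $z\,|z|^{-d-\alpha}$ would fail to be summable.
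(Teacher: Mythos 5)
Your proposal is correct and mirrors the paper's proof of Proposition \ref{p6-4}: the paper likewise replaces $V$ by the truncated $V_m$, uses the bound $\sup_{t,x}|V_m(t,x)|\le c_1 m$ (resp.\ $c_1 2^{m(1-\alpha)}$) in the $I_{m,n,1}$ estimate, runs the same fourth-moment/Borel--Cantelli argument on the $J_{1,k,m}$, $J_{2,k,m}$ terms with the truncated jump sums, and absorbs via Young's inequality with the same choices of $n$ and $\theta_0$. The only cosmetic difference is that the paper tracks the slightly sharper quantity $S_m\asymp\max\{2^{m(2-d-2\alpha)},1\}+m\,\I_{\{d=2-2\alpha\}}$ coming out of the variance calculation, whereas you dominate it by $A_m$; since $S_m\lesssim A_m\lesssim A_m^2$ this costs nothing in the final bound.
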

\begin{proof}
According to {\bf Assumption (H)}{(ii)}, we can easily see that
$\sup_{t\in \R_+,x\in \Z^d}|V_m(t,x)|\le c_1 m $ for all $m\ge1$ when $\alpha=1$, and $\sup_{t\in \R_+,x\in \Z^d}|V_m(t,x)|\le c_1 2^{m(1-\alpha)} $ for all $m\ge1$ when $\alpha\in (0,1)$.
On the other hand, note that \eqref{p6-1-3} still holds with $\phi_m(t,x)$ and $V(t,x)$ replaced by $\widetilde \phi_m(t,x)$ and $V_m(t,x)$,  respectively. Similarly, we can define $I_{m,n,1}(t)$ and $I_{m,n,2}(t)$ as the same way as that in the proof of Proposition \ref{p6-1} with
$\phi_m(t,x)$ and $V(t,x)$ replaced by $\widetilde \phi_m(t,x)$ and $V_m(t,x)$, respectively.

Suppose that $\alpha\in (0,1)$. Using the fact $\sup_{t\in \R_+,x\in \Z^d}|V_m(t,x)|\le c_1 2^{m(1-\alpha)}$
and following the argument for \eqref{p6-1-4a}, we can derive that
\begin{align}\label{p6-4-2}
I_{m,n,1}(t)&\le c_22^{md+2m(1-\alpha)+n\alpha}+\frac{1}{4}\E^\w_{t,B_{2^m}}\left(\widetilde\phi_m(t,\cdot),\widetilde\phi_m(t,\cdot)\right).
\end{align}

Furthermore, according to the argument for \eqref{e:2.19}, we obtain
\begin{align*}
& \Ee\left[\left(\sum_{y\in \Z^d_{m,k}}\int_0^{2^{m\alpha} T}\left|\oint_{B_{2^k}(y)}V_m(t,\cdot)\,d\mu\right|^2dt\right)^2\right]\le c_3
 \sum_{l=1}^2 J_{l,k,m},
\end{align*}
where
\begin{align*}
J_{1,k,m}:=& 2^{-4kd}\sum_{i,i'=1}^d\sum_{y,y'\in \Z^d_{m,k}}\int_0^{2^{m\alpha} T}
\int_0^{2^{m\alpha} T}\sum_{(z_1,z_3)\in E: z_1\in B_{2^k}(y),|z_1-z_3|\le 2^m}
\sum_{(z_1',z_3')\in E: z_1'\in B_{2^k}(y'),|z_1'-z_3'|\le 2^m}\frac{|(z_1-z_3)^{(i)}|^2}{|z_1-z_3|^{2(d+\alpha)}}\\
&\qquad\qquad\qquad\qquad\qquad\qquad\qquad\qquad\qquad\qquad\times  \frac{|(z_1'-z_3')^{(i')}|^2}{|z_1'-z_3'|^{2(d+\alpha)}}
\Ee\left[|\xi_{t,z_1,z_3}|^2  \, |\xi_{t',z_1',z_3'}|^2\right]
\,dt\,dt',
\end{align*}
\begin{align*}
J_{2,k,m}:=& 2^{-4 kd }\sum_{i,i'=1}^d\sum_{y\in \Z^d_{m,k}}\int_0^{2^{m\alpha} T}
\int_0^{2^{m\alpha} T}\sum_{(z_1,z_3)\in E: z_1\in B_{2^k}(y),|z_1-z_3|\le 2^m}
\sum_{(z_2,z_4)\in E: z_2\in B_{2^k}(y),|z_2-z_4|\le 2^m}\\
&\qquad\times \frac{|(z_1-z_3)^{(i)}||(z_1-z_3)^{(i')}|}{ |z_1-z_3| ^{2(d+\alpha)}} \frac{|(z_2-z_4)^{(i)}||(z_2-z_4)^{(i')}|}{ |z_2-z_4| ^{2(d+\alpha)}}
\Ee\left[|\xi_{t,z_1,z_3}||\xi_{t,z_2,z_4}||\xi_{t',z_1,z_3}||\xi_{t',z_2,z_4}|\right]\,dt\,dt'
\end{align*}
and $\xi_{t,x,y}:=w_{t,x,y}-
\sK (t)$.
By some direct computation, we get that for every $1\le k \le m$,
\begin{align*}
J_{1,k,m}&\le
 c_4 2^{-4kd}2^{2m\alpha}T^2 \,  \Big(  \sum_{z_1\in B_{2^m}}  \sum_{z_3 \in \Z^d: (z_1, z_3)\in E, |z_1-z_3|\le 2^m} \frac{1}{ |z_1-z_3|^{2(d+\alpha-1)}} \Big)^2\\
&\le   c_5  2^{-4kd+2md+2 m\alpha}\left(\max\{2^{m(4-4\alpha-2d)},1\}+(\log m)^2\I_{\{d=2-2\alpha\}}\right)T^2
\end{align*} and
\begin{align*}
J_{2,k,m}
&\le c_4 2^{-4kd} {2^{2m\alpha}T^2 } \sum_{y\in \Z^d_{m,k}}
\sum_{(z_1,z_3)\in E: z_1\in B_{2^k}(y),|z_1-z_3|\le 2^m}
\sum_{(z_2,z_4)\in E: z_2\in B_{2^k}(y),|z_2-z_4|\le 2^m}\frac{1}{(|z_1-z_3||z_2-z_4|)^{2(d+\alpha-1)}}\\
&\le c_52^{-3kd+md+2m\alpha}\left(\max\{2^{m(4-4\alpha-2d)},1\}+(\log m)^2\I_{\{d=2-2\alpha\}}\right)T^2.
\end{align*}
Hence, applying both estimates above and the Borel-Cantelli lemma as that in the proof for \eqref{p6-1-5--}, we can see that
for a.e. $\w\in \Omega$,
there is a random variable $m^*(\w)\ge 1$ so that for all $m\ge m^*(\w)$ and $[\frac{(1+\gamma)\log_2 m}{2(1-\theta_0)d}]\le
k\le m$ with $\theta_0\in (\alpha/d,1)$,
\begin{equation*}
\sum_{y\in \Z_{m,k}^d}\int_0^{2^{m\alpha }T}\left|\oint_{B_{2^k}(y)}V(t,\cdot)\,d\mu\right|^2dt\le 2^{m\alpha+(m-k)d-\theta_0 k d}
\left(\max\{2^{m(2-2\alpha-d)},1\}+(\log m)I_{\{d=2-2\alpha\}}\right)T.
\end{equation*}
Similarly, for a.e. $\w\in \Omega$ we also can find  $m^{**}(\w)\ge 1$ so that for all
$m\ge m^{**}(\w)$ and
$[\frac{(1+\gamma)\log_2 m}{2(1-\theta_0)d}]\le
k\le m$,
$$
\int_0^{2^{m\alpha}T}\left|\oint_{B_{2^m}}V(t,\cdot)\,d\mu\right|^2dt
\le  2^{m\alpha-\theta_0 k d}
\left(\max\{2^{m(2-2\alpha-d)},1\}+(\log m)\I_{\{d=2-2\alpha\}}\right)T.
$$
Therefore, putting these estimates  into \eqref{p6-1-4} (which still holds true), we find that
for all
$n\ge \left[\frac{(1+\gamma)\log_2 m}{2(1-\theta_0)d}\right]$ and
$m\ge m_0(\w):=\max\{m^*(\w),m^{**}(\w)\},$
\begin{align*}
\int_0^{2^{m\alpha}T}I_{m,n,2}(t)\,dt
&\le c_6\left(\max\{2^{m(1-\alpha-d/2)},1\}+(\log m)^{1/2}\I_{\{d=2-2\alpha\}}\right)T^{1/2}\left(\sum_{k=[\frac{(1+\gamma)\log_2 m}{2(1-\theta_0)d}]}^m
2^{\frac{k(d+\alpha)+m\alpha+(m-k)d-\theta_0 k d}{2}}\right)\\
&\quad\times\left(\int_0^{2^{m\alpha}T} \E^\w_{t,B_{2^m}}
\left(\widetilde\phi_m(t,\cdot), \widetilde\phi_m(t,\cdot)\right) dt\right)^{1/2}\\
&\le c_{7}T^{1/2}2^{(md+m\alpha)/{2}}\left(\max\{2^{m(1-\alpha-d/2)},1\}+(\log m)^{1/2}\I_{\{d=2-2\alpha\}}\right)\left(\sum_{k=1}^\infty 2^{-\frac{k(\theta_0 d-\alpha)}{2}}\right)\\
&\quad\times \left(\int_0^{2^{m\alpha}T} \E^\w_{t,B_{2^m}}
\left(\widetilde\phi_m(t,\cdot), \widetilde\phi_m(t,\cdot)\right) dt\right)^{1/2}\\
&\le c_{8}2^{m\alpha+md}\left(\max\{2^{m(2-2\alpha-d)},1\}+(\log m)\I_{\{d=2-2\alpha\}}\right)T+\frac{1}{4}\int_0^{2^{m\alpha}T} \E^\w_{t,B_{2^m}}
\left(\widetilde\phi_m(t,\cdot), \widetilde\phi_m(t,\cdot)\right) dt.
\end{align*}

Combining this with \eqref{p6-1-3} and \eqref{p6-4-2} and taking
$n=\max\left(\left[\frac{(1+\gamma)\log_2 m}{2(1-\theta_0)d}\right],\left[\frac{(1+\gamma)\log_2 m}{d}\right]\right)$
yield that
\begin{align*}
\quad & \sup_{t\in [0,T]}\int_{B_{2^m}}|\widetilde\phi_m(2^{m\alpha}t,x)|^2\mu(dx)\le -
\int_0^{2^{m\alpha}T} \E^\w_{t,B_{2^m}}\left(\widetilde\phi_m(t,\cdot), \widetilde\phi_m(t,\cdot)\right) dt+
c_{9}m^{\frac{(1+\gamma)\alpha}{(2(1-\theta_0)d)\wedge d}}2^{m\alpha+2m(1-\alpha)+md}T,
\end{align*}
where we have used  the fact that $2(1-\alpha-d)<2(1-\alpha)$.
So we prove \eqref{p6-4-1} for the case $\alpha\in (0,1)$.

For the case $\alpha=1$, we can prove the desired conclusion by almost the same way, so we omit
the details.
\end{proof}

\begin{remark}\label{r2-5} \rm
 The proofs of two propositions above are based on the boundedness assumption of $\{w_{t,x,y}: t\in \R_+, (x,y)\in E\}$. In particular, we use this for the estimate of the first term in the right hand side of \eqref{p6-1-4a}. Similarly, this condition
is used for
 the estimate \eqref{p6-4-2}.
Therefore, while it is possible to weaken the boundedness assumption on $\{w_{t,x,y}: t\in \R_+, (x,y)\in E\}$
 to the moment condition
 such like
 \eqref{e:2.4},
  the large scale estimates for the associated correctors would be larger, and so the convergence rates in Theorem \ref{T:1.1}
  would be worse.
\end{remark}

 \section{Quantitative differences between the scaled operators $\{\sL_t^{k,\w}\}_{k\ge1}$ and the limit $\bar \sL$}\label{section3}

In this section, we aim to establish quantitative differences
  between   the scaled operators $\{\sL_t^{k,\w}\}_{k\ge1}$ and the limit operator $\bar \sL$. Recall that for any $f\in C_b^2(\R^d)$,
\begin{align*}
\bar \sL f(x)=
\begin{cases}
\displaystyle\int_{\R^d} \left(f(x+z)-f(x)-\langle\nabla f(x), z\rangle \I_{\{|z|\le 1\}}\right)\frac{\sK }{|z|^{d+\alpha}}\,dz,\quad &\alpha\in (0,1],\\
\displaystyle\int_{\R^d} \left(f(x+z)-f(x)-\langle\nabla f(x), z\rangle \right)\frac{\sK }{|z|^{d+\alpha}}\,dz,\quad &\alpha\in (1,2),
\end{cases}
\end{align*}
where $\sK =\lim_{t \to \infty}\frac{1}{t}\int_0^t \sK (s)\,ds$ is defined by \eqref{a1-1-0a} and $\sK (t)=\Ee w_{t,x,y}$ for all $t>0$ and $(x,y)\in E$.
It is also natural to consider its corresponding discrete approximation; that is, for any $f\in {\mathcal B}_b(k^{-1}\Z^d)$ (the space of bounded measurable functions on $k^{-1}\Z^d$),  define
\begin{equation}\label{e3-1a}
\begin{split}
\bar \sL^{k}_tf(x)= &k^{-d}\sum_{z\in k^{-1}\Z^d}\left(f(x+z)-f(x)\right)\frac{\sK (k^\alpha t)}{|z|^{d+\alpha}}\\
  =&\begin{cases} k^{-d}\displaystyle\sum_{z\in k^{-1}\Z^d}\left(f(x+z)-f(x)-\langle\nabla f(x), z\rangle \I_{\{|z|\le 1\}}\right)\frac{\sK (k^\alpha t)}{|z|^{d+\alpha}},\quad &\alpha\in (0,1],\\
  k^{-d}\displaystyle\sum_{z\in k^{-1}\Z^d}\left(f(x+z)-f(x)-\langle\nabla f(x), z\rangle  \right)\frac{\sK (k^\alpha t)}{|z|^{d+\alpha}},\quad &\alpha\in (1,2).
\end{cases}
\end{split}
\end{equation}

 \begin{proposition}\label{p6-3--}
Suppose that $f\in C_b^2(\R^d)$ satisfies
\begin{equation}\label{p6-5-1}
|\nabla^i f(x)|\le
C_0 ((1+|x|)^{-d-\beta}+\I_{\{|x|\le R_0\}}(x) ),\quad x\in \R^d,\, i=0,1,2
\end{equation}
for some positive constants $C_0$, $R_0$ and $\beta\in (0,\infty]$.
Then there exists a positive constant $C_1:=C_1(C_0,R_0,\beta)$ such that  for all $k\ge 1$,
\begin{equation}\label{p6-3---}
\begin{split}
&\int_0^T\int_{k^{-1}\Z^d} |  \bar \sL_t^{k}f(x)-  \bar \sL f(x)|^2\,\mu^{ (k)}(dx)dt
\le C_1T\pi\left(k^\alpha T\right)+C_1T
\begin{cases}k^{-2},\quad  &\alpha\in (0,1),\\
 k^{-2}(\log k)^2,\quad &\alpha=1,\\
 k^{-2(2-\alpha)},\quad &\alpha\in (1,2).
\end{cases}
\end{split}
\end{equation}
\end{proposition}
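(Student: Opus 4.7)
The plan is to decouple the time dependence of $\sK(k^{\alpha}t)$ from the spatial discretization error. Define the time-independent normalized operators
\[
N^{k}f(x):=k^{-d}\!\!\!\sum_{z\in k^{-1}\Z^{d}\setminus\{0\}}\!\!\!\frac{f(x+z)-f(x)-\nabla f(x)\cdot z\,\chi_{\alpha}(z)}{|z|^{d+\alpha}},\qquad Nf(x):=\int_{\R^{d}}\frac{f(x+z)-f(x)-\nabla f(x)\cdot z\,\chi_{\alpha}(z)}{|z|^{d+\alpha}}\,dz,
\]
with $\chi_{\alpha}(z)=\I_{\{|z|\le 1\}}$ if $\alpha\le 1$ and $\chi_{\alpha}\equiv 1$ if $\alpha>1$, so that $\bar\sL^{k}_{t}f=\sK(k^{\alpha}t)\,N^{k}f$ and $\bar\sL f=\sK\,Nf$; the added $\chi_{\alpha}$-subtraction is lawful because its contribution to both the lattice sum and the Lebesgue integral over the symmetric set $\{|z|\le 1\}$ vanishes by $z\mapsto -z$ symmetry. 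Writing $\bar\sL^{k}_{t}f-\bar\sL f=(\sK(k^{\alpha}t)-\sK)N^{k}f+\sK(N^{k}f-Nf)$, squaring with $(a+b)^{2}\le 2a^{2}+2b^{2}$, integrating in $t\in[0,T]$, and applying the substitution $s=k^{\alpha}t$ (which turns $\int_{0}^{T}(\sK(k^{\alpha}t)-\sK)^{2}\,dt$ into $T\,\pi(k^{\alpha}T)$ by the definition of $\pi$), the claim reduces to (a)~the uniform estimate $\sup_{k}\|N^{k}f\|_{L^{2}(\mu^{(k)})}\le C$, and (b)~the polynomial-in-$k$ bound on $\|N^{k}f-Nf\|^{2}_{L^{2}(\mu^{(k)})}$ appearing on the right-hand side of \eqref{p6-3---}.

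For (a) I would establish the pointwise bound $|N^{k}f(x)|\le C\,((1+|x|)^{-d-(\alpha\wedge\beta)}+\I_{\{|x|\le 2R_{0}\}})$ uniformly in $k$ by splitting the $z$-sum into $\{|z|\le 1\}$ (using second-order Taylor, the summand is bounded by $|z|^{2-d-\alpha}\sup_{|y-x|\le 1}|\nabla^{2}f(y)|$, which is $\mu^{(k)}$-summable since $\alpha<2$) and $\{|z|>1\}$ (summand $\lesssim (|f(x+z)|+|f(x)|)/|z|^{d+\alpha}$, with the linear $\nabla f(x)\cdot z$ term cancelling by symmetry for $\alpha>1$); the decay \eqref{p6-5-1} of $f$ then supplies the displayed envelope in $|x|$, which is square-integrable.

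For (b), set $G_{x}(z):=(f(x+z)-f(x)-\nabla f(x)\cdot z\,\chi_{\alpha}(z))/|z|^{d+\alpha}$ and split the $z$-variable at $|z|=k^{-1}$. On $\{|z|\le k^{-1}\}$ the discrete sum vanishes (no nonzero lattice point), while Taylor expansion combined with $\int z/|z|^{d+\alpha}\,dz=0$ gives $|\int_{|z|\le k^{-1}}G_{x}(z)\,dz|\le C\,k^{\alpha-2}\sup_{|y-x|\le k^{-1}}|\nabla^{2}f(y)|$ for $\alpha\ne 1$ (with an extra $\log k$ factor when $\alpha=1$). On $\{|z|>k^{-1}\}$ the function $G_{x}$ is piecewise smooth away from $0$ (with a jump across $|z|=1$ when $\alpha\le 1$ that is neutralized by the same lattice symmetry used above), and the standard Riemann-sum estimate yields an error $\lesssim k^{-1}\int_{|z|>k^{-1}}|\nabla_{z}G_{x}(z)|\,dz$. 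Using $|\nabla_{z}G_{x}(z)|\lesssim|z|^{1-d-\alpha}\sup|\nabla^{2}f|$ near zero and integrable decay at infinity, a direct computation gives this radial integral equal to $O(1)$, $O(\log k)$, and $O(k^{\alpha-1})$ for $\alpha\in(0,1)$, $\alpha=1$, and $\alpha\in(1,2)$ respectively, with the logarithm at $\alpha=1$ coming from the critical scaling $\int_{k^{-1}}^{1}dr/r$. Combining these contributions and tracking the decay of $f,\nabla f,\nabla^{2}f$ through the estimate yields the pointwise bound
\[
|N^{k}f(x)-Nf(x)|\le C\,\eta(k)\,\bigl((1+|x|)^{-d-\beta}+\I_{\{|x|\le 2R_{0}\}}\bigr),\qquad \eta(k):=\begin{cases}k^{-1}&\alpha\in(0,1),\\ k^{-1}\log k&\alpha=1,\\ k^{-(2-\alpha)}&\alpha\in(1,2),\end{cases}
\]
whose square integrates against $\mu^{(k)}$ to the rates stated in \eqref{p6-3---}.

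The main obstacle is the careful bookkeeping at the two ``bad'' scales $|z|\sim k^{-1}$ and $|z|\sim 1$. Lattice symmetry is used crucially to cancel $\nabla f(x)\cdot z$ in symmetric sums (to avoid losing polynomial factors in $k$) and to neutralize the jump of $G_{x}$ at $|z|=1$ when $\alpha\le 1$; the critical case $\alpha=1$ produces the logarithmic factor through $\int_{k^{-1}}^{1}dr/r$; and the $x$-dependent decay of $\nabla^{2}f$ must be propagated carefully through the error so that the constants in the final $L^{2}$-bound depend only on $C_{0},R_{0},\beta$ and not on $k$.
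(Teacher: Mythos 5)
Your decomposition — normalizing out $\sK(k^\alpha t)$ and writing $\bar\sL^k_t f - \bar\sL f = (\sK(k^\alpha t)-\sK)N^k f + \sK(N^k f - Nf)$ — is exactly the decomposition the paper uses (with $\bar\sL^{k,0}f = \sK\,N^k f$), and both the uniform $L^2$ bound for $N^k f$ and the Riemann-sum estimate for $N^k f - Nf$ follow the paper's own two-step route, so the overall strategy is the same. Two of your intermediate claims are, however, slightly off. First, the pointwise envelope $|N^k f(x)-Nf(x)|\le C\,\eta(k)\bigl((1+|x|)^{-d-\beta}+\I_{\{|x|\le 2R_0\}}\bigr)$ fails for large $|x|$ when $\beta>\alpha$: the contribution from long-range jumps (those with $|z|\gtrsim 1+|x|$) produces a tail decaying only like $(1+|x|)^{-d-\alpha}$ in $x$. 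The paper's explicit bound for $\alpha\in(1,2)$, for instance, is $c\bigl(k^{-(2-\alpha)}(1+|x|)^{-d-\beta}+k^{-1}(1+|x|)^{-d-\alpha}\bigr)$; since $k^{-1}\le\eta(k)$ for $\alpha\ge 1$ and the extra tail is square-integrable, your $L^2$ conclusion still follows, but the envelope as stated is not correct and should be replaced by the two-term version. Second, the jump of $G_x$ across $|z|=1$ when $\alpha\le 1$ is not "neutralized by lattice symmetry" — there is no cancellation. The correct argument (which is what the paper does via the indicators $\I_{\{1-1/k\le|z|\le 1\}}$ and the like) is that the cubes straddling the sphere $|z|=1$ form a shell of $\mu^{(k)}$-volume $O(k^{-1})$, and the $O(1)$ oscillation of the truncated integrand over each such cube then contributes a total error of order $k^{-1}$; this lands on the right rate, but for a measure-of-the-shell reason, not a cancellation reason.
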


\begin{proof}
Define
\begin{align*}
\bar \sL^{k,0}f(x)= &k^{-d}\sum_{z\in k^{-1}\Z^d}\left(f(x+z)-f(x)\right)\frac{\sK }{|z|^{d+\alpha}}\\
  =&\begin{cases} k^{-d}\displaystyle\sum_{z\in k^{-1}\Z^d}\left(f(x+z)-f(x)-\langle\nabla f(x), z\rangle \I_{\{|z|\le 1\}}\right)\frac{\sK }{|z|^{d+\alpha}},\quad &\alpha\in (0,1],\\
  k^{-d}\displaystyle\sum_{z\in k^{-1}\Z^d}\left(f(x+z)-f(x)-\langle\nabla f(x), z\rangle  \right)\frac{\sK }{|z|^{d+\alpha}},\quad &\alpha\in (1,2).
\end{cases}
\end{align*}
Using the mean value theorem, we can obtain that for all $t\in \R_+$,
\begin{align*}
&\int_{k^{-1}\Z^d} |  \bar \sL_t^{k}f(x)-  \bar \sL^{k,0} f(x)|^2\,\mu^{ (k)}(dx)\\
&\le c_1\left((1+|x|)^{-d-\beta}+\I_{\{|x|\le R_0\}}(x)\right)
\left(k^{-d}\sum_{z\in k^{-1}\Z^d}\left(1\wedge |z|^2\right)\frac{1}{|z|^{d+\alpha}}+
k^{-d}\sum_{z\in k^{-1}\Z^d:|z|>1}(1+|x+z|)^{-d-\beta}|z|^{-d-\alpha}\right)\\
&\quad\times \left|\sK (k^\alpha t)-\sK \right|^2\\
&\le c_2\left((1+|x|)^{-d-\beta}+(1+|x|)^{-d-\alpha}\right) \left|\sK (k^\alpha t)-\sK \right|^2,
\end{align*}
where in the last inequality we used
\begin{align*}
&k^{-d}\sum_{z\in k^{-1}\Z^d:|z|>1}(1+|x+z|)^{-d-\beta}|z|^{-d-\alpha}\\
&\le k^{-d}
\left(\sum_{z\in k^{-1}\Z^d:|z|>\frac{1+|x|}{2}}+\sum_{z\in k^{-1}\Z^d:|z|\le \frac{1+|x|}{2},|z|>1}\right)
(1+|x+z|)^{-d-\beta}|z|^{-d-\alpha}\\
&\le c_3\left((1+|x|)^{-d-\beta}+(1+|x|)^{-d-\alpha}\right).
\end{align*}
Hence,
\begin{align*}
\int_0^T\int_{k^{-1}\Z^d} |  \bar \sL_t^{k}f(x)-  \bar \sL^{k,0} f(x)|^2\,\mu^{ (k)}(dx)
dt\le c_4T\pi(k^\alpha T).
\end{align*}
Thus, in order to obtain the desired assertion we only need to verify that
\begin{equation}\label{p6-5-1b}
\int_{k^{-1}\Z^d} |  \bar \sL^{k,0}f(x)-  \bar \sL f(x)|^2\,\mu^{ (k)}(dx)
\le c_5
\begin{cases}k^{-2},\quad  &\alpha\in (0,1),\\
 k^{-2}(\log k)^2,\quad &\alpha=1,\\
 k^{-2(2-\alpha)},\quad &\alpha\in (1,2).
\end{cases}
\end{equation}

When $\beta=\infty$, $f\in C_c^2(\R^d)$ and the estimate
\eqref{p6-5-1b}
has been established in
\cite[Proposition 4.1]{CCKW3}. So it remains to prove this for the case that $\beta\in (0,\infty)$.

{\bf Case 1}: $\alpha\in (1,2)$.
Set
\begin{align*}
H(x;z):=\frac{\delta f(x;z)}{|z|^{d+\alpha}},\quad x,z\in \R^d,
\end{align*}
where
$
\delta f(x;z):=f(x+z)-f(x)-\langle \nabla f(x), z \rangle.
$
For every $k\ge 4\sqrt{d}$, $x,z\in k^{-1}\Z^d$ and $y\in \prod_{1\le i \le d}(z_i,z_i+k^{-1}]$ (here we use the notation $z=(z_1,\cdots,z_d)\in k^{-1}\Z^d$), by
the mean value theorem, we have
\begin{align*}
&\left|H(x;z)-H(x;y)\right|\\
&\le \frac{1}{2}\sup_{x_1\in \R^d:|x_1-x|\le \frac{2\sqrt{d}}{k}}\left|\nabla^2 f(x_1)\right|\cdot\left(|z|^{2-d-\alpha}+|y|^{2-d-\alpha}\right)\I_{\{|z|\le \frac{2\sqrt{d}}{k}\}}\\
&\quad+\Bigg(\left|\delta f(x;y)\right|\cdot\left||y|^{-d-\alpha}-|z|^{-d-\alpha}\right|+
\left|\delta f(x;z)-\delta f(x;y)\right|\cdot |z|^{-d-\alpha}\Bigg)\I_{\{|z|>\frac{2\sqrt{d}}{k}\}}\\
&\le c_6(1+|x|)^{-d-\beta}\left(|z|^{2-d-\alpha}+|y|^{2-d-\alpha}\right)\I_{\{|z|\le \frac{2\sqrt{d}}{k}\}}
+c_6\Bigg[\sup_{x_1\in \R^d:|x_1-x|\le \frac{1+|x|}{2}}\left|\nabla^2 f(x_1)\right|\cdot \frac{|z-y|}{|y|^{d+\alpha-1}}\\
&\quad\quad
+\left(\int_0^1\int_0^1\left|\nabla^2 f\left(x+t(y+s(z-y))\right)\right|dsdt\right)\cdot \frac{|z-y|}{|y|^{d+\alpha-1}}\Bigg]\I_{\{\frac{2\sqrt{d}}{k}\le |z|\le \frac{1+|x|}{2}\}}\\
&\,\, +c_6\Bigg[\Big(|f(x+y)|+|f(x)|+|\nabla f(x)||y|\Big)\cdot\frac{|z-y|}{|y|^{d+\alpha+1}}+
\left(\int_0^1\left|\nabla f\left(x+y+s(z-y)\right)-\nabla f(x)\right|ds\right)
\cdot \frac{|z-y|}{|y|^{d+\alpha}}\Bigg]\I_{\{|z|>\frac{1+|x|}{2}\}}\\
&\le c_7(1+|x|)^{-d-\beta}\Bigg(\left(|z|^{2-d-\alpha}+|y|^{2-d-\alpha}\right)\I_{\{|z|\le \frac{2\sqrt{d}}{k}\}}+k^{-1}|y|^{-d-\alpha+1}\I_{\{|z|>\frac{2\sqrt{d}}{k}\}}\Bigg)\\
&\quad +c_7k^{-1}(1+|x+y|)^{-d-\beta}|y|^{-d-\alpha}\I_{\{|z|>\frac{1+|x|}{2}\}}.
\end{align*}
Hence, for every $x\in k^{-1}\Z^d$ and $k\ge 4\sqrt{d}$,
\begin{align*}
&|\bar \sL^{k,0} f(x)-\bar \sL f(x)|\\
&=\left|\sum_{z\in k^{-1}\Z^d}\int_{\prod_{1\le i \le d}(z_i,z_i+k^{-1}]}\left(H(x;z)-H(x;y)\right)dy\right|\\
&\le c_8(1+|x|)^{-d-\beta}\Bigg(\int_{\{|y|\le \frac{3\sqrt{d}}{k}\}}|y|^{2-d-\alpha}dy+k^{-d}\sum_{z\in k^{-1}\Z^d: |z|\le \frac{3\sqrt{d}}{k}}|z|^{2-d-\alpha}
+k^{-1}\int_{\{|y|>\frac{\sqrt{d}}{k}\}}|y|^{-d-\alpha+1}dy\Bigg)\\
&\quad +c_8k^{-1}\int_{\{|y|>\frac{1+|x|}{3}\}}(1+|x+y|)^{-d-\beta}|y|^{-d-\alpha}dy\\
&\le c_9\left(k^{-(2-\alpha)}(1+|x|)^{-d-\beta}+k^{-1}(1+|x|)^{-d-\alpha}\right).
\end{align*}
This implies that \eqref{p6-3---} holds for all $k\ge 4\sqrt{d}$.

On the other hand, by some direct computation, it is easy to verify that for every $f\in C_b^2(\R^d)$ satisfying \eqref{p6-5-1},
\begin{equation}\label{e:remark1}
\int_{k^{-1}\Z^d}  | \bar \sL^{k,0} f(x)-\bar \sL f(x) |^2\,
\mu^{(k)}(dx)\le 2\int_{k^{-1}\Z^d} |\bar \sL^{k,0}f(x)|^2\,
\mu^{(k)}(dx)+ 2\int_{k^{-1}\Z^d} |\bar \sL^{k,0} f(x)|^2\,
\mu^{(k)}(dx)\le c_{10}.
\end{equation}
Therefore, the desired assertion \eqref{p6-3---} for all $k\ge 1$ follows immediately.

{\bf Case 2:} $\alpha\in (0,1]$. In this case, we define
\begin{align*}
\hat H(x;z):
=\frac{\hat \delta f(x;z)}{|z|^{d+\alpha}},\quad x,z\in \R^d,
\end{align*}
where $\hat \delta f(x;z):=f(x+z)-f(x)-
\langle \nabla f(x), z\I_{\{|z|\le 1\}}\rangle$.
By the mean value theorem and the same arguments as in {\bf Case 1},
we find that for every $k\ge 4\sqrt{d}$, $x,z\in k^{-1}\Z^d$ and $y\in  \prod_{1\le i\le d}(z_i,z_i+k^{-1}]$,
\begin{align*}
&|\hat H(x;z)-\hat H(x;y)|\\
&\le \frac{1}{2}\sup_{x_1\in \R^d:|x_1-x|\le \frac{2\sqrt{d}}{k}}\left|\nabla^2 f(x_1)\right|\cdot\left(|z|^{2-d-\alpha}+|y|^{2-d-\alpha}\right)\I_{\{|z|\le \frac{2\sqrt{d}}{k}\}}\\
&\quad+\Bigg(\left|\hat \delta_k f(x;y)\right|\cdot\left||y|^{-d-\alpha}-|z|^{-d-\alpha}\right|+
\left|\hat\delta_k f(x;z)-\hat\delta_k f(x;y)\right|\cdot |z|^{-d-\alpha}\Bigg)\I_{\{|z|>\frac{2\sqrt{d}}{k}\}}\\
&\quad+\Big(\frac{|\langle \nabla f(x), z\rangle|}{|z|^{d+\alpha}}\I_{\{1-1/k\le |z|\le1\}}+
\frac{|\langle \nabla f(x), y\rangle|}{|y|^{d+\alpha}}\I_{\{1-(1+\sqrt{d})/k\le |y|\le 1+(1+\sqrt{d})/k\}}\Big)\\
&\le c_{11}(1+|x|)^{-d-\beta}\Bigg(\left(|z|^{2-\alpha-d}+|y|^{2-\alpha-d}\right)\I_{\{|z|\le 2\sqrt{d}/k\}}+
k^{-1}|y|^{-d-\alpha+1}\I_{\{2\sqrt{d}/k<|z|\le 1- 1/k\}}\\
&\quad\quad+ \left(k^{-1}|y|^{-d-1-\alpha}+k^{-1}|y|^{-d-\alpha}\right)\I_{\{|z|> 1- 1/k\}}+c_{11}\left(\I_{\{1-1/k\le |z|\le1\}}+\I_{\{1-(1+\sqrt{d})/k\le |y|\le 1+(1+\sqrt{d})/k\}}\right)\Bigg)\\
&\quad +c_{11}k^{-1}(1+|x+y|)^{-d-\beta}(1+|y|)^{-d-\alpha}\I_{\{|z|> {(1+|x|)}/{2}\}},
\end{align*}
where $\hat \delta_k f(x;z):=f(x+z)-f(x)-
\left\langle \nabla f(x), z\I_{\{|z|\le 1-{1}/{k}\}}\right\rangle$.

When $\alpha=1$, we derive that for every $x\in k^{-1}\Z^d$ and $k\ge 4\sqrt{d}$,
\begin{align*}
|  \bar \sL^{k,0} f(x)
-\bar \sL f(x)|
&=\left|\sum_{z\in k^{-1}\Z^d}\int_{\prod_{1\le i \le d}(z_i,z_i+k^{-1}]}\left(\hat H(x;z)-\hat H(x;y)\right)dy\right|\\
&\le c_{12}(1+|x|)^{-d-\beta}\Bigg(\int_{\{|y|\le 3\sqrt{d}/k\}}|y|^{1-d}\,dy+
k^{-d}\sum_{z\in k^{-1}\Z^d:|z|\le 2\sqrt{d}/k}|z|^{1-d}\\
&\quad\quad+
 k^{-1}\int_{\{\sqrt{d}k^{-1}<|y|\le 2\}}
|y|^{-d}\,dy
+  k^{-1}\int_{\{|y|>1-2\sqrt{d}/k\}}|y|^{-d-1}\, dy+k^{-1}\Bigg)\\
&\quad+c_{12}k^{-1}\int_{\{|y|>\frac{1+|x|}{3}\}}(1+|x+y|)^{-d-\beta}(1+|y|)^{-d-1}dy\\
&\le  c_{13}\left((k^{-1}\log k) (1+|x|)^{-d-\beta}+k^{-1}(1+|x|)^{-d-1}\right),
\end{align*}
which implies \eqref{p6-5-1b} for every $k\ge 4\sqrt{d}$.

Following similar arguments as above, we can show that \eqref{p6-5-1b} for all $k\ge 4\sqrt{d}$ when $\alpha\in (0,1)$.

Combining both estimates above with \eqref{e:remark1},  we prove \eqref{p6-5-1b} when $\alpha\in (0,1]$. The proof is complete.
\end{proof}

Next, we will establish the following statement.

\begin{proposition}\label{p6-3} Suppose that {\bf Assumption (H)} holds and that
$\alpha\in (0,1)$.
For
a.e. $\w\in \Omega$,
any $\theta\in (0,1)$,
$C_1, R_0>0$ and $ \beta\in (0,\infty]$,
 there exist a constant
$C_2:=C_2(\theta, C_1, R_0, \beta)>0$
$($independent of $T)$ and a random variable $k_1(\w)\ge1$ such that for all $k\ge k_1(\w)$ and
all  $f\in C\left([0,T];C_b^2(\R^d)\right)$ satisfying that
\begin{equation}\label{p6-3-1a}
|\nabla^i f(t,x)|\le
C_1\left((1+|x|)^{-d-\beta}+\I_{\{|x|\le R_0\}}(x)\right),
\quad t\in[0,T],\quad x\in \R^d,\, i=0,1,2,
\end{equation}
it holds that
\begin{equation}\label{p6-3-1}
\int_0^T \int_{k^{-1}\Z^d} |\sL^{k,\w}_t f(t,\cdot)(x)- \bar \sL_t^{k}f(t,\cdot)(x)
|^2\,\mu^{(k)}(dx)\,dt\le C_2 \max\{k^{- 2\theta(1-\alpha)},k^{-\theta d}\}T.
\end{equation}
\end{proposition}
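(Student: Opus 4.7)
\textit{Plan.} The plan is to split $\sL^{k,\w}_t f - \bar\sL^{k}_t f$ as a weighted sum over mean-zero, bounded, pairwise-independent random variables, bound the expected $L^2$-error deterministically by a double sum, and then upgrade to an almost sure bound via Borel--Cantelli with a $\theta$-loss. Introduce $\xi^{(k)}_{t,x,y}(\w):=w_{k^\alpha t,kx,ky}(\w)-\sK(k^\alpha t)$. By \textbf{(H)}(i)--(ii) these are mean-zero and bounded by $C_1+K_2$, and by \textbf{(H)}(iv) the family $\{\xi^{(k)}_{t,x,y}\}_{(x,y)\in k^{-1}E}$ is independent at each fixed $t$. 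Since $\alpha\in(0,1)$, no first-order compensation is needed and
\[
\sL^{k,\w}_t f(t,\cdot)(x)-\bar\sL^{k}_t f(t,\cdot)(x)=k^{-d}\sum_{y\in k^{-1}\Z^d,\, y\neq x}\frac{f(t,y)-f(t,x)}{|x-y|^{d+\alpha}}\,\xi^{(k)}_{t,x,y}(\w).
\]

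\textit{Expected bound.} Squaring, integrating against $\mu^{(k)}(dx)\,dt$, and taking expectation, pairwise independence kills the $y_1\neq y_2$ cross terms, giving
\[
\Ee\int_0^T\|\sL^{k,\w}_t f-\bar\sL^{k}_t f\|_{L^2(\mu^{(k)})}^2\,dt\le C\,k^{-3d}\int_0^T\sum_{x\in k^{-1}\Z^d}\sum_{y\neq x}\frac{(f(t,y)-f(t,x))^2}{|x-y|^{2(d+\alpha)}}\,dt.
\]
I then split the inner double sum into the near region $\{|y-x|\le 1\}$, handled via $(f(y)-f(x))^2\le|y-x|^2\sup_{[x,y]}|\nabla f|^2$, and the far region $\{|y-x|>1\}$, handled via $(f(y)-f(x))^2\le 2(f(y)^2+f(x)^2)$. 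The pointwise decay assumption \eqref{p6-3-1a} makes $|f|^2$ and $|\nabla f|^2$ lie in $L^1(\R^d)$ with norms controlled by $(C_1,R_0,\beta)$, so $k^{-d}\sum_{x\in k^{-1}\Z^d}(|f|^2+|\nabla f|^2)(x)\le C$; a dyadic annular estimate of the kernel sums $\sum_{z\in k^{-1}\Z^d,\,0<|z|\le 1}|z|^{-s}$ and $\sum_{z\in k^{-1}\Z^d,\,|z|>1}|z|^{-s}$ then shows that, after the outer $k^{-3d}$ scaling, the near contribution is $\lesssim T\,k^{-2(1-\alpha)}$ (with a harmless $\log k$ at the critical $d+2\alpha=2$) while the far contribution is $\lesssim T\,k^{-d}$. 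This yields
\[
\Ee\int_0^T\|\sL^{k,\w}_t f-\bar\sL^{k}_t f\|_{L^2(\mu^{(k)})}^2\,dt\le C(C_1,R_0,\beta)\,T\,\max\{k^{-2(1-\alpha)},\,k^{-d}\}.
\]

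\textit{Almost sure upgrade and main obstacle.} To reach the a.s.\ statement, I bound higher moments of $Y_k:=\int_0^T\|\sL^{k,\w}_t f-\bar\sL^{k}_t f\|^2\,dt$: since $Y_k$ is a positive quadratic form in uniformly bounded independent (in $y$, at each fixed $t$) random variables, a Rosenthal-type inequality combined with Jensen in the $t$-integral gives $\Ee Y_k^p\le C^pT^p\max\{k^{-2(1-\alpha)},k^{-d}\}^p$ for every integer $p\ge 1$. Choosing $p$ so that $p(1-\theta)\bigl(2(1-\alpha)\wedge d\bigr)>1$ and applying Markov's inequality against $T\,\max\{k^{-2\theta(1-\alpha)},k^{-\theta d}\}$ makes the tail probabilities summable in $k$, and Borel--Cantelli produces the random threshold $k_1(\w)$. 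The main technical obstacle is making this Borel--Cantelli step uniform over the non-compact class of admissible $f$; the plan is to exploit the linearity $f\mapsto \sL^{k,\w}_t f-\bar\sL^{k}_t f$ (so that $Y_k^{1/2}$ is a seminorm), run Borel--Cantelli on a countable dense subset of the class in the weighted $C^2$-topology induced by \eqref{p6-3-1a}, and then close the remaining gap by the triangle inequality $Y_k^{1/2}(f)\le Y_k^{1/2}(\tilde f)+Y_k^{1/2}(f-\tilde f)$ together with the deterministic bound $Y_k^{1/2}(f-\tilde f)\le C\|f-\tilde f\|_{\mathrm{class}}$ that follows from uniform boundedness of the $\xi^{(k)}$ and a direct Dirichlet-form estimate on $f-\tilde f$.
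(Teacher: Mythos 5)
Your decomposition $\sL^{k,\w}_t f - \bar\sL^{k}_t f = k^{-d}\sum_y \frac{f(t,y)-f(t,x)}{|x-y|^{d+\alpha}}\xi^{(k)}_{t,x,y}$, the near/far dyadic split to obtain the deterministic bound $\Ee Y_k\lesssim T\max\{k^{-2(1-\alpha)},k^{-d}\}$, and the Markov--Borel--Cantelli upgrade with a $\theta$-loss are exactly the paper's strategy (cf.\ the proof of Proposition~\ref{p6-2}, to which the paper's proof of Proposition~\ref{p6-3} explicitly defers). Where you diverge is in the moment step: the paper expands $\Ee\bigl[(\int_0^{k^\alpha T}\sum_x\sum_{y_1,y_2}\cdots \xi\xi\,dt)^{2n}\bigr]$ directly and counts which products of $\xi$'s have non-zero expectation under {\bf (H)}(iv), tracking time-correlations by hand, whereas you pull out the time integral with Jensen and invoke a Rosenthal/Hanson--Wright bound at each fixed $t$. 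Both yield the same moment estimate; your route is shorter and conceptually cleaner, at the price of needing to verify the hypotheses of the abstract concentration inequality for the specific block-rank-one quadratic form $k^{-d}\sum_x\bigl(\sum_y a_{x,y}\xi_{t,x,y}\bigr)^2$ and checking the constant's $p$-dependence (you claim a single constant $C$ uniform in $p$, which is too optimistic --- as in the paper, the constant depends on $p$, though this is harmless for the Borel--Cantelli step).

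Your flag about uniformity over the test class is well taken: the paper applies Proposition~\ref{p6-3} (via the proof of Theorem~\ref{T:1.1}) with the $k$-dependent function $f=\bar u\,\psi_{k^\theta}$, so the null set and threshold produced by Borel--Cantelli for a single $f$ do not automatically serve uniformly; the paper does not address this explicitly. However, your proposed fix is not yet a proof. The deterministic bound $Y_k^{1/2}(f-\tilde f)\le C\|f-\tilde f\|_{\mathrm{class}}$ with $C$ independent of $k$ does hold here --- for $\alpha\in(0,1)$ the kernel sum $k^{-d}\sum_{0<|z|\le 1,\,z\in k^{-1}\Z^d}|z|^{1-d-\alpha}$ is $O(1)$ --- but a ball in a weighted $C^2$ space is not totally bounded, so there is no way to cover the class by a single countable $\epsilon$-net; the approximating index $j=j(f,\epsilon_k)$ is forced to infinity as $k\to\infty$, and the union bound $\sum_j\sum_k\Pp(Y_k(\tilde f_j)>\cdots)$ diverges unless the thresholds are inflated in $j$, which in turn destroys the estimate after the triangle inequality. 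To close this one needs some compactness: e.g.\ imposing $C^3$ bounds with decay (which $\mathscr{S}_{\beta,g,T}$ in \eqref{e:1.4} does supply, with $0\le i\le 3$) yields pre-compactness in the weighted $C^2$ topology and a genuine finite $\epsilon$-net; or one can simply run Borel--Cantelli diagonally along the specific sequence $f_k=\bar u\,\psi_{k^\theta}$, at the cost of letting the random threshold depend on $\bar u$. Either patch works, but the density-plus-triangle argument as you sketch it does not go through verbatim.
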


On the other hand, due to the corrector approach we adopt later we shall consider the following variant of the scaled operators  $\{\sL_t^{k,\w}\}_{k\ge1}$.
For every $f\in {\mathcal B}_b(k^{-1}\Z^d)$, define
\begin{equation}\label{e6-6}
 \widehat \sL^{k,\w}_t f(x):=\begin{cases}
\displaystyle  k^{-d}\sum_{z\in k^{-1}\Z^d}\left(f(x+z)-f(x)-\langle \nabla f(x), z \rangle\I_{\{|z|\le 1\}}\right)\frac{w_{k^\alpha t,kx,k(x+z)}}{|z|^{d+\alpha}},&\quad \alpha\in (0,1],\\
\displaystyle k^{-d}\sum_{z\in k^{-1}\Z^d}\left(f(x+z)-f(x)-\langle \nabla f(x), z\rangle\right)\frac{w_{k^\alpha t,kx,k(x+z)}}{|z|^{d+\alpha}},&\quad \alpha\in (1,2).\end{cases}\end{equation}

\begin{proposition}\label{p6-2}  Suppose that {\bf Assumption (H)} holds.
For
a.e. $\w\in \Omega$,
every $\theta\in (0,1)$,
  $C_1,  R_0>0$ and $\beta\in (0,\infty]$,
there exist a constant
 $C_3:= C_3(\theta, C_1, R_0, \beta)>0$
$($independent of $T)$ and a random variable $k_2(\w)\ge 1$ such that
for every $k\ge k_2(\w)$ and every $f\in C\left([0,T];C_b^2(\R^d)\right)$ satisfying
\eqref{p6-3-1a}, it holds that
\begin{equation}\label{p6-2-1}
\begin{split}
\int_0^{T}\int_{k^{-1}\Z^d} |\widehat \LL_t^{k,\w} f(t,\cdot)(x)- \bar \LL_t^k f(t,\cdot)(x)
|^2\,\mu^{ (k)}(dx)\,dt
\le C_3\max\{
k^{-2\theta(2-\alpha)},k^{-\theta d}\}T.
\end{split}
\end{equation}
\end{proposition}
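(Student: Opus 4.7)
The plan is to follow the moment-based strategy used for Propositions \ref{p6-1} and \ref{p6-3}, but exploit the additional two-power gain coming from the second-order gradient correction present in both $\widehat\LL_t^{k,\w}$ and $\bar\LL^k_t$. Setting $\xi_{s,x,y}:=w_{s,x,y}-\sK(s)$, the difference rewrites as
\[
\widehat\LL_t^{k,\w}f(t,\cdot)(x)-\bar\LL_t^k f(t,\cdot)(x)=k^{-d}\sum_{z\in k^{-1}\Z^d}\frac{D_k f(t,x,z)}{|z|^{d+\alpha}}\,\xi_{k^\alpha t,kx,k(x+z)},
\]
where $D_k f(t,x,z)$ is the Taylor remainder $f(t,x+z)-f(t,x)-\langle\nabla f(t,x),z\rangle$ (with the truncation $\I_{\{|z|\le 1\}}$ in front of the gradient term when $\alpha\in(0,1]$). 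Under \eqref{p6-3-1a} one has $|D_k f(t,x,z)|\le C|z|^2\bigl((1+|x|)^{-d-\beta}+\I_{\{|x|\le R_0+1\}}\bigr)$ for $|z|\le 1$ (by a second-order Taylor expansion) and a favorable tail bound for $|z|>1$, so the kernel $|D_k f|^2/|z|^{2(d+\alpha)}$ decays like $|z|^{4-2d-2\alpha}$ near the origin -- two powers better than in Proposition \ref{p6-3}.

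The second step is the variance calculation using {\bf Assumption (H)}(iv). For a fixed $x$ with $z_1\neq z_2$, or for different $x$'s sharing a common $z$, the unordered pairs $\{kx,k(x+z_i)\}$ are distinct, so the centered $\xi_{k^\alpha t,kx,k(x+z)}$ are pairwise independent, and the double sum collapses onto its diagonal:
\[
\Ee\int_0^T\!\!\int_{k^{-1}\Z^d}\!|\widehat\LL_t^{k,\w}f(t,\cdot)(x)-\bar\LL_t^k f(t,\cdot)(x)|^2\,\mu^{(k)}(dx)\,dt=k^{-3d}\int_0^T\sum_{x,z}\frac{|D_k f(t,x,z)|^2}{|z|^{2(d+\alpha)}}\,\Ee[\xi^2]\,dt.
\]
A case analysis on whether $d+2\alpha$ exceeds $4$ -- which governs the convergence of $\sum_{0<|z|\le 1}|z|^{4-2d-2\alpha}$ on $k^{-1}\Z^d$ -- combined with the integrable spatial tail $\sum_{x\in k^{-1}\Z^d}(1+|x|)^{-2d-2\beta}\lesssim k^d$, yields
\[
\Ee\bigl[\text{LHS of \eqref{p6-2-1}}\bigr]\le C T\cdot\max\{k^{-2(2-\alpha)},k^{-d}\},
\]
where $C$ depends only on $C_1,R_0,\beta,d,\alpha$.

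To upgrade the moment bound to the quenched estimate, I would run a high-moment version of the same pairing argument: the diagonal quantity is a sum of independent bounded centered random variables of the form $\sum_{x,z}a_{x,z}(t,f)\xi_{x,z}$ with explicit deterministic coefficients, so a Rosenthal/Burkholder-type inequality (or direct combinatorial iteration of {\bf(H)}(iv)) gives $\Ee[(\text{LHS})^{p}]\le C_p T^p\bigl(\max\{k^{-2(2-\alpha)},k^{-d}\}\bigr)^{p}$ for every integer $p$. Markov's inequality then produces $\Pp\bigl(\text{LHS}\ge T\max\{k^{-2\theta(2-\alpha)},k^{-\theta d}\}\bigr)\le C_p\bigl(\max\{k^{-2(2-\alpha)},k^{-d}\}\bigr)^{p(1-\theta)}$, which is summable over $k\ge 1$ once $p$ is chosen with $p(1-\theta)\min\{2(2-\alpha),d\}>1$, and a Borel-Cantelli argument delivers the quenched threshold. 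The main obstacle is that the naive Borel-Cantelli procedure gives a threshold depending on $f$; to secure a single $k_2(\w)$ valid for every admissible $f$ I would reformulate the high-moment estimate as a bound on an $f$-independent master random quantity -- essentially a weighted $\ell^2$-norm of the noise over dyadic shells $|z|\asymp 2^{-j}$ with weights tuned so that the $|z|^4$ gain of the gradient correction still survives after a Cauchy-Schwarz-type domination -- multiplied by a deterministic factor proportional to $C_1^2$, so that Borel-Cantelli applied to the master quantity automatically controls every $f$ satisfying \eqref{p6-3-1a} with the same $k_2(\w)$.
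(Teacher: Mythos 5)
Your decomposition and first-moment computation are correct and match the paper's route: you write the difference as
$k^{-d}\sum_{z} D_kf(t,x,z)\,|z|^{-d-\alpha}\,\xi_{k^\alpha t,kx,k(x+z)}$ (the paper's $\eta_k$ in rescaled coordinates), observe that the second-order Taylor remainder gives $|D_kf(t,x,z)|\lesssim |z|^2(1+|x|)^{-d-\beta}$ for $|z|\le 1$, and use {\bf Assumption~(H)}(iv) to collapse $\Ee\bigl[\bigl(\sum_z\cdots\xi_z\bigr)^2\bigr]$ onto the diagonal $z_1=z_2$, arriving at $\Ee[\text{LHS}]\lesssim T\max\{k^{-2(2-\alpha)},k^{-d}\}$. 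Your plan of passing to $2n$-th moments (iterating the pairing argument of {\bf (H)}(iv) using the boundedness of $\xi$), then Markov with threshold $\max\{k^{-2\theta(2-\alpha)},k^{-\theta d}\}T$ and Borel--Cantelli with $n$ chosen so that $n(1-\theta)\min\{4(2-\alpha),2d\}>1$, is precisely what the paper does. The core of your argument is the same approach.

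The one place you diverge concerns the uniformity of $k_2(\w)$ over the class of admissible $f$. You correctly point out that a naive Borel--Cantelli applied to the $f$-dependent events yields a threshold $k_2(\w,f)$ rather than a single $k_2(\w)$; this subtle point is in fact not explicitly addressed in the paper's written proof either. However, your proposed remedy --- dominating the quadratic form by an $f$-independent ``master weighted $\ell^2$-norm of the noise'' via a Cauchy--Schwarz-type step --- is not clearly workable. Replacing the signed off-diagonal form $\bigl(\sum_z a_z\xi_z\bigr)^2$ by a product of deterministic weights with a diagonal quantity $\sum_z w_z|\xi_z|^2$ destroys exactly the mean-zero cancellation that produces the $k$-decay (the diagonal quantity is already of order one, since $|\xi_z|\lesssim 1$). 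The simpler resolution, and the one implicitly used by the paper, is that the $2n$-th moment bound depends on $f$ only through the constants $C_1,R_0,\beta$ of \eqref{p6-3-1a}; in the proof of Theorem~\ref{T:1.1} the proposition is applied to a specific $k$-indexed family $f_k(t,x)=\bar u(t,x)\psi_{k^\theta}(x)$ that satisfies \eqref{p6-3-1a} with constants \emph{uniform in $k$}, so Markov and Borel--Cantelli can be run directly on the sequence $Z_k(f_k)$ with $k$-uniform moment bounds. You should make this explicit rather than pursue the master-quantity construction, which as described would lose the rate.
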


It is clearly seen from \eqref{p6-3-1} and \eqref{p6-2-1} that
the non-divergence form operator version
of the original symmetric
scaled operators  $\{\widehat \LL_t^{k,\w}\}_{k\ge1}$ approximate the discrete operator $\bar \LL^k_t$ better than the scaled operators  $\{\sL_t^{k,\w}\}_{k\ge1}$. In particular, the estimate
\eqref{p6-3-1} is useful only when $\alpha\in (0,1)$. As mentioned above, the introduction of
the non-divergence form operator version
of the scaled operators  $\{\widehat \LL_t^{k,\w}\}_{k\ge1}$ arises from the corrector approach adopted below, which is essential for the case
when
$\alpha\in [1,2)$.

\smallskip

 In the following,
 we give the proofs of the
 above two propositions.
 For the notational simplicity, we will suppress
the random environment
$\w \in \Omega$ from the notations.

\begin{proof}[Proof of Proposition $\ref{p6-2}$]
We will only prove the conclusion for $\beta<\infty$. The case that $\beta=\infty$
can be verified by exactly the same way.

{\bf Case 1:} We first consider the case that $\alpha\in (1,2)$. Let $f\in C\left([0,T];C_b^2(\R^d)\right)$ satisfy
\eqref{p6-3-1a}.
For every $t\ge0 $ and $x,y\in \Z^d$,  define
$$\eta_k(t,x,y):=f(t,k^{-1}y)-f(t,k^{-1}x)-k^{-1}\left\langle\nabla f(t,k^{-1}x), y-x\right\rangle,
\quad \xi_{t,x,y}=w_{t,x,y}-\sK (t).
$$
  For all $t\geq 0$
 and $x\in \Z^d$,
$$
 \widehat \LL_t^{k}f(t,\cdot)(k^{-1}x)=k^{\alpha}
 \sum_{z\in \Z^d \setminus \{x\} }
 \frac{\eta_k(t,x,z)}{|x-z|^{d+\alpha}}w_{k^\alpha t,x,z},\qquad  \bar \LL^{k}f(t,\cdot)(k^{-1}x)=k^{\alpha}
 \sum_{z\in \Z^d \setminus \{x\} }
 \frac{\eta_k(t,x,z)}{|x-z|^{d+\alpha}}
$$ and so
\begin{equation}\label{p6-2-2}
\begin{split}
&\int_0^{T}\int_{k^{-1}\Z^d} |\widehat \LL_t^{k}f(t,x)-\bar \LL^{k}_tf(t,x) |^2\,\mu^{(k)}(dx)\,dt\\
&=k^{-(d-2\alpha)}\int_0^T\sum_{x\in \Z^d}\left(\sum_{y\in \Z^d\setminus \{x\} }\frac{\eta_k(t,x,y)}{|x-y|^{d+\alpha}}\xi_{k^\alpha t,x,y}\right)^2\,dt\\
&=k^{-(d-\alpha)}\int_0^{k^\alpha T}\sum_{x\in \Z^d}\sum_{y_1,y_2\in  \Z^d\setminus \{x\}}
\frac{\eta_k(k^{-\alpha}t,x,y_1)}{|x-y_1|^{d+\alpha}}\frac{\eta_k(k^{-\alpha}t,x,y_2)}{|x-y_2|^{d+\alpha}}\xi_{t,x,y_1}\xi_{t,x,y_2}\,dt.
\end{split}
\end{equation}
As in the proof of Proposition \ref{p6-1},
\begin{align*}
&\Ee\left[\left| \int_0^{k^\alpha T}
k^{-(d-\alpha)}\sum_{x\in \Z^d}\sum_{y_1,y_2\in  \Z^d\setminus \{x\}}
\frac{\eta_k(k^{-\alpha}t,x,y_1)}{|x-y_1|^{d+\alpha}}\frac{\eta_k(k^{-\alpha}t,x,y_2)}{|x-y_2|^{d+\alpha}}\xi_{t,x,y_1}\xi_{t,x,y_2}\,dt\right|^2\right]\\
&=k^{-2(d-\alpha)}
\sum_{x,x'\in \Z^d}
\sum_{y_1,y_2\in  \Z^d\setminus \{x\}}\sum_{y_1',y_2'\in  \Z^d\setminus \{x'\}}  \int_0^{k^\alpha T} \int_0^{k^\alpha T}
\frac{\eta_k(k^{-\alpha}t,x,y_1)}{|x-y_1|^{d+\alpha}}\frac{\eta_k(k^{-\alpha}t,x,y_2)}{|x-y_2|^{d+\alpha}}\\
&\qquad\times\frac{\eta_k(k^{-\alpha}t',x',y_1')}{|x'-y_1'|^{d+\alpha}}\frac{\eta_k(k^{-\alpha}t',x',y_2')}{|x'-y_2'|^{d+\alpha}}
\Ee\left[\xi_{t,x,y_1}\xi_{t,x,y_2}\xi_{t',x',y_1'}\xi_{t',x',y_2'}\right]\,dt\,dt'.
\end{align*}
 Applying
the same argument as that in the proof of Proposition \ref{p6-1} for the estimates of
the non-zero
  terms
of $\Ee\left[\xi_{t,x,y_1}\xi_{t,x,y_2}\xi_{t',x',y_1'}\xi_{t',x',y_2'}\right]$, we can get
 \begin{align}
&\Ee\left[\left| \int_0^{k^\alpha T}
k^{-(d-\alpha)}\sum_{x\in \Z^d}\sum_{y_1,y_2\in  \Z^d\setminus \{x\}}
\frac{\eta_k(k^{-\alpha}t,x,y_1)}{|x-y_1|^{d+\alpha}}\frac{\eta_k(k^{-\alpha}t,x,y_2)}{|x-y_2|^{d+\alpha}}\xi_{t,x,y_1}\xi_{t,x,y_2}\,dt\right|^2\right]\nonumber\\
&\le c_1k^{-2(d-\alpha)} \sum_{x,x'\in \Z^d}\sum_{y_1\in \Z^d\setminus \{x\}}\sum_{y_1'\in \Z^d\setminus \{x'\}}
  \int_0^{k^\alpha T}  \int_0^{k^\alpha T}\frac{|\eta_k(k^{-\alpha}t,x,y_1)|^2}{|x-y_1|^{2(d+\alpha)}}
\frac{|\eta_k(k^{-\alpha}t',x',y_1')|^2}{|x'-y_1'|^{2(d+\alpha)}}\,dt\,dt'\nonumber\\
&\quad+c_1k^{-2(d-\alpha)} \sum_{x \in \Z^d}\sum_{y_1,y_2\in \Z^d\setminus\{x\}}
  \int_0^{k^\alpha T}
 \int_0^{k^\alpha T}
\frac{|\eta_k(k^{-\alpha}t,x,y_1)|^2}{|x-y_1|^{2(d+\alpha)}}
\frac{|\eta_k(k^{-\alpha}t',x,y_2)|^2}{|x-y_2|^{2(d+\alpha)}}
\,dt\,dt'.\label{*3-8}
\end{align}
Note that, by \eqref{p6-3-1a} and the mean value theorem, for every
$t\ge0$ and $x,y\in \Z^d$,
\begin{align*}
|\eta_k(t,x,y)|
&\le c_2\left(1+|k^{-1}x|\right)^{-d-\beta}\left(k^{-2}|x-y|^2\I_{\{|x-y|\le k\}} +k^{-1}|x-y|\I_{\{|x-y|> k\}} \right)\\
&\quad +c_2\left(\left(1+|k^{-1}x|\right)^{-d-\beta}
+\left(1+|k^{-1}y|\right)^{-d-\beta}\right)\I_{\{|x-y|> k\}}.
\end{align*}
Thus, putting this into the estimate above, we arrive at
\begin{align*}
&\mbox{LHS of \eqref{*3-8}}\\
&\le c_2k^{-2d+4\alpha}T^2\Bigg[\sum_{x\in \Z^d}(1+|k^{-1}x|)^{-2d-2\beta}\Bigg(k^{-4}\sum_{y\in \Z^d: |x-y|\le k}|x-y|^{-2d-2\alpha+4}\\
&\qquad\qquad\qquad \qquad +\sum_{y\in \Z^d: |x-y|>k}\left(k^{-2}|x-y|^{-2d-2\alpha+2}+|x-y|^{-2d-2\alpha}\right)\Bigg)
\Bigg]^2\\
&\quad+ c_2k^{-2d+3\alpha}T\Bigg[\sum_{x\in \Z^d}(1+|k^{-1}x|)^{-4d-4\beta}\Bigg(k^{-4}\sum_{y\in \Z^d: |x-y|\le k}|x-y|^{-2d-2\alpha+4}
+k^{-2}\sum_{y\in \Z^d: |x-y|>k}|x-y|^{-2d-2\alpha+2}\\
&\qquad\qquad\qquad\qquad+\sum_{y\in \Z^d: |x-y|>k}|x-y|^{-2d-2\alpha}\Bigg)^2+\sum_{x\in \Z^d}\left(\sum_{y\in \Z^d:|y-x|>k}(1+|k^{-1}y|)^{-2d-2\beta}|x-y|^{-2d-2\alpha}\right)^2\Bigg]\\
&\le c_3\left(\max\{k^{-(8-4\alpha)},k^{-2d}\}+k^{-2d}(\log k)^2\I_{\{d=4-2\alpha\}}\right)T^2.
\end{align*}

Following the same procedure (in particular, to estimate
the non-zero term of
$\Ee\left[\prod_{i=1}^n\xi_{t,x,y_i}\prod_{j=1}^n\xi_{t',x',y_j'}\right]$)
as above, we  derive that
for every $n\ge1$,
there exists a constant $c_4:=c_4(n)>0$ such that for all $k\ge1$,
\begin{equation}\label{p6-2-3a}
\begin{split}
&\Ee\left[\left| \int_0^{k^\alpha T}
k^{-(d-\alpha)}\sum_{x\in \Z^d}\sum_{y_1,y_2\in  \Z^d\setminus \{x\}}
\frac{\eta_k(k^{-\alpha}t,x,y_1)}{|x-y_1|^{d+\alpha}}\frac{\eta_k(k^{-\alpha}t,x,y_2)}{|x-y_2|^{d+\alpha}}\xi_{t,x,y_1}\xi_{t,x,y_2}\,dt\right|^{2n}\right]\\
&\le c_4\left(\max\{k^{-n(8-4\alpha)},k^{-2nd}\}+k^{-2nd}(\log k)^{2n}\I_{\{d=4-2\alpha\}}\right)T^{2n}.
\end{split}
\end{equation}
Therefore, for every $\alpha\in (1,2)$ and $\theta\in (0,1)$, we choose $n\ge1$ so that
$n(1-\theta)\min\{(8-4\alpha),2d\}>1$ and obtain
\begin{align*}
&\sum_{k=1}^\infty\Pp\Bigg(\left| \int_0^{k^\alpha T}
k^{-(d-\alpha)}\sum_{x\in \Z^d}\sum_{y_1,y_2\in  \Z^d\setminus \{x\}}
\frac{\eta_k(k^{-\alpha}t,x,y_1)}{|x-y_1|^{d+\alpha}}\frac{\eta_k(k^{-\alpha}t,x,y_2)}{|x-y_2|^{d+\alpha}}\xi_{t,x,y_1}\xi_{t,x,y_2}\,dt\right|\\
&\qquad\qquad >\max\{k^{-2\theta(2-\alpha)},k^{-\theta d}\}T\Bigg)\\
&\le \sum_{k=1}^\infty T^{-2n}\min\{k^{4n\theta(2-\alpha)},k^{2n\theta d}\}\\
&\quad\times \Ee\left[\left| \int_0^{k^\alpha T}
k^{-(d-\alpha)}\sum_{x\in \Z^d}\sum_{y_1,y_2\in  \Z^d\setminus \{x\}}
\frac{\eta_k(k^{-\alpha}t,x,y_1)}{|x-y_1|^{d+\alpha}}\frac{\eta_k(k^{-\alpha}t,x,y_2)}{|x-y_2|^{d+\alpha}}\xi_{t,x,y_1}\xi_{t,x,y_2}\,dt\right|^{2n}\right]\\
&\le c_4\sum_{k=1}^\infty \max\{k^{-4n(1-\theta)(2-\alpha)},k^{-2n(1-\theta)d}\}(\log k)^{2n}<\infty.
\end{align*}
This along with
  the
Borel-Cantelli lemma yields that
for a.e. $\w\in \Omega$,
there exists  $k^*(\w)\ge 1$ such that for all $k\ge k^*(\w)$,
\begin{align*}
&\left| \int_0^{k^\alpha T}
k^{-(d-\alpha)}\sum_{x\in \Z^d}\sum_{y_1,y_2\in  \Z^d\setminus \{x\}}
\frac{\eta_k(k^{-\alpha}t,x,y_1)}{|x-y_1|^{d+\alpha}}\frac{\eta_k(k^{-\alpha}t,x,y_2)}{|x-y_2|^{d+\alpha}}\xi_{t,x,y_1}\xi_{t,x,y_2}\,dt\right|\\
&\le \max\{
k^{-2\theta(2-\alpha)},k^{-\theta d}\}T.\end{align*}
This, along with \eqref{p6-2-2}, yields the desired conclusion \eqref{p6-2-1} for $\alpha\in (1,2)$.

\smallskip

{\bf Case 2:} Now we prove \eqref{p6-2-1} for the case $\alpha\in (0,1]$. Set
\begin{align*}
\widetilde \eta_k(t,x,y):=f(t,k^{-1}y)-f(t,k^{-1}x)-k^{-1}
\left\langle \nabla f(t,k^{-1}x), (y-x)\I_{\{|y-x|\le k\}}\right\rangle
\quad \hbox{and} \quad \xi_{t,x,y}:=w_{t,x,y}-\sK (t).
\end{align*}
Then, by the definition of $\widehat \LL^k_t$, we have
\begin{align*}
&\int_0^{T}\int_{k^{-1}\Z^d} |\widehat \LL_t^{k}f(t,x)-\bar \LL^{k}_tf(t,x) |^2\,\mu^{(k)}(dx)\,dt\\
&=k^{-(d-\alpha)}\int_0^{k^\alpha T}\sum_{x\in \Z^d}\sum_{y_1,y_2\in  \Z^d\setminus \{x\}}
\frac{\widetilde \eta_k(k^{-\alpha}t,x,y_1)}{|x-y_1|^{d+\alpha}}\frac{\widetilde \eta_k(k^{-\alpha}t,x,y_2)}{|x-y_2|^{d+\alpha}}\xi_{t,x,y_1}\xi_{t,x,y_2}\,dt.
\end{align*}
Note that
$$
|\widetilde \eta_k(t,x,y)|
\le c_5k^{-2}\left(1+|k^{-1}x|\right)^{-d-\beta}|x-y|^2\I_{\{|x-y|\le k\}}
+c_5\left(\left(1+|k^{-1}x|\right)^{-d-\beta}
+\left(1+|k^{-1}y|\right)^{-d-\beta}\right)\I_{\{|x-y|> k\}}.
$$
Using the estimate above and following the proof of {\bf Case 1}, we can verify the desired conclusion \eqref{p6-2-1} when $\alpha\in (0,1]$.
\end{proof}

\begin{proof}[Proof of Proposition $\ref{p6-3}$]
Let
\begin{align*}
\eta_k(t,x,y):=f(t,k^{-1}y)-f(t,k^{-1}x),\quad \xi_{t,x,y}:=w_{t,x,y}-
\sK (t).
\end{align*}
Due to the definition of $\LL^k_t$,  we have
\begin{align*}
& \int_0^T \int_{k^{-1}\Z^d} |\sL^{k}_t f(t,\cdot)(x)-\bar \sL^{k}_tf(t,\cdot)(x) |^2\,\mu^{(k)}(dx)\\
&=k^{-(d-\alpha)}\int_0^{k^\alpha T}\sum_{x\in \Z^d}\sum_{y_1,y_2\in  \Z^d\setminus \{x\}}
\frac{\eta_k(k^{-\alpha}t,x,y_1)}{|x-y_1|^{d+\alpha}}\frac{\eta_k(k^{-\alpha}t,x,y_2)}{|x-y_2|^{d+\alpha}}\xi_{t,x,y_1}\xi_{t,x,y_2}\,dt.
\end{align*}
On the other hand, it holds that
$$
|\eta_k(t,x,y)|\le c_1k^{-1}\left(1+|k^{-1}x|\right)^{-d-\beta}|x-y|\I_{\{|x-y|\le k\}}
+c_1\left(\left(1+|k^{-1}x|\right)^{-d-\beta}
+\left(1+|k^{-1}y|\right)^{-d-\beta}\right)\I_{\{|x-y|> k\}}.$$
Using this estimate and following the argument for {\bf Case 1} in the proof of Proposition \ref{p6-2}, we can prove the desired conclusion \eqref{p6-3-1}.
\end{proof}

\begin{remark}\label{r3-4}
\begin{itemize}
\item[{\rm(i)}]
Let us briefly mention the difference of the approaches between the time-independent setting and the time-dependent setting.  We take the proofs of \cite[Proposition 4.2]{CCKW3} and Proposition \ref{p6-2} for example.
(Similar remarks
hold for the proofs of  \cite[Proposition 4.3]{CCKW3} and Proposition
\ref{p6-3}.)
 In the time-dependent case, the estimate for the left hand side of \eqref{p6-2-1} is reduced to that for the right hand side of \eqref{p6-2-2}. Since in the
time-dependent  framework we are concerned
with
the space-time
 variable $(t,x,y)$ on $\R_+\times \Z^d\times \Z^d$ and the
 random coefficients $\{w_{t,x,y}\}$ are not required to be independent for all distinct pairs $(t,(x,y))
\in \R_+\times E$, we can not apply the Hoeffding inequality as in \cite[Proposition 4.2]{CCKW3}. Instead, we
estimate the moment of
 the right hand side of \eqref{p6-2-2} (this trick has
also been used in the proof of Proposition \ref{p6-1}).
  The price to pay is that the estimate \eqref{p6-2-1} is worse than that in  \cite[Proposition 4.2]{CCKW3},
   where the corresponding estimate \eqref{p6-2-1} holds with $\theta=1$.
 \item[{\rm(ii)}]
Similar to Remark \ref{r2-5}, we also use the uniform boundedness of $\{w_{t,x,y}: t\in \R_+, (x,y)\in E\}$ in the argument of
\eqref{p6-2-3a}. If we replace the boundedness condition of
 $\{w_{t,x,y}: t\in \R_+, (x,y)\in E\}$ by some
 moment condition, then the convergence rate
 \eqref{p6-3---} would be slower.\end{itemize}
\end{remark}

\section{Quantitative homogenizations} \label{section4}

In this section, we present the proof of Theorem \ref{T:1.1}.
Let  $\psi \in C_c^2(\R^d)$ be a cut-off function
taking values between $0$ and $1$ so that $\psi (x)=1$ for $|x|\le 1/2$ and $\psi =0$ for $ |x|\ge 1$.
For $R\geq 1$, define
\begin{equation}
\label{e:4.1}
\psi_R(x):= \psi(x/R) , \quad  x\in \R^d .
\end{equation}
Note that
\begin{align}
\label{e:4.2}
\nabla^i \psi_R(x)=0 \text{ for all } x\in \R^d\ {\rm with}\ |x|\le R/2\ {\rm or}\ |x|>R \quad \hbox{and} \quad
\|\nabla^i \psi_R\|_\infty\le c_0R^{-i} \ \hbox{ for  } i=1,2.
\end{align}

\smallskip

\begin{lemma}\label{l6-1}
Let
$f\in C_b^2(\R^d)$ satisfy that
\begin{equation}\label{l6-1-1a}
|\nabla^i f(x)|\le C_1
 (1+|x|)^{-d-\beta},
\quad x\in \R^d,\ i=0,1,2
\end{equation} with $C_1>0$ and
$\beta\in (0,\infty)$,
and, for every
$R\ge1$, let $\psi_R\in C_c^\infty(\R^d)$ be the cut-off function defined by
\eqref{e:4.1}.
It holds that
\begin{equation}\label{l6-1-1}
|\bar \LL f(x)-\bar \LL (f\psi_R)(x)|\le C_2\left(R^{-d-\beta}\I_{B_{2R}}(x)
+\left((1+|x|)^{-d-\beta}+R^{-\beta}(1+|x|)^{-d-\alpha}\right)\I_{B_{2R}^c}(x)\right),\ x\in \R^d,
\end{equation}
where $C_2>0$ is independent of $R$. In particular,
there is a constant $C_3>0$ such that
for all $R\ge1$,
\begin{equation}\label{l6-1-1b}\int_{\R^d}|\bar \LL f(x)-\bar \LL (f\psi_R)(x)|^2\,dx\le C_3R^{-d-2\beta}.\end{equation}  \end{lemma}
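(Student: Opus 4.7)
Set $g := f(1-\psi_R)$, so the left side of \eqref{l6-1-1} equals $|\bar\LL g(x)|$. From $\|\nabla^i\psi_R\|_\infty\le c_0 R^{-i}$ ($R\ge 1$) and \eqref{l6-1-1a}, the Leibniz rule gives $|\nabla^i g(y)|\le C(1+|y|)^{-d-\beta}$ for $i=0,1,2$, and $g\equiv 0$ on $\bar B_{R/2}$ because $\psi_R\equiv 1$ there. Odd symmetry makes $\int_{|z|>1}\langle\nabla g(x),z\rangle|z|^{-d-\alpha}dz=0$ (relevant for $\alpha\in(1,2)$ where $\bar\LL$ contains a full-space gradient subtraction), so uniformly in $\alpha\in(0,2)$ I may write
\[
\bar\LL g(x) = \int_{|z|\le 1}\bigl(g(x+z)-g(x)-\langle\nabla g(x),z\rangle\bigr)\frac{\sK}{|z|^{d+\alpha}}\,dz + \int_{|z|>1}\bigl(g(x+z)-g(x)\bigr)\frac{\sK}{|z|^{d+\alpha}}\,dz =: I_1(x)+I_2(x).
\]

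\textbf{Bound on $B_{2R}$.} The target is $|\bar\LL g(x)|\le CR^{-d-\beta}$. When $|x|\le R/2-1$, $B_1(x)\subset\bar B_{R/2}$, so $g\equiv 0$ on $B_1(x)$ and $I_1(x)=0$; otherwise the Taylor remainder bound $|I_1(x)|\le C\sup_{w\in B_1(x)}|\nabla^2 g(w)|\cdot\int_{|z|\le 1}|z|^{2-d-\alpha}dz\le CR^{-d-\beta}$ holds, because either $w\in\bar B_{R/2}$ (where $\nabla^2 g=0$) or $|w|\gtrsim R$ (so $|\nabla^2 g(w)|\lesssim(1+|w|)^{-d-\beta}\lesssim R^{-d-\beta}$), and the $z$-integral converges thanks to $\alpha<2$. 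The $g(x)$-contribution to $I_2$ is $\le C|g(x)|\le CR^{-d-\beta}$ (either $g(x)=0$ or $|x|\in(R/2,2R]$ giving $|g(x)|\lesssim R^{-d-\beta}$); for the remaining $\int g(x+z)\sK|z|^{-d-\alpha}dz$, substituting $y=x+z$ and splitting $\{|y|\ge R/2,\,|y-x|>1\}$ by $|y-x|>|y|/2$ or $\le|y|/2$ gives $\lesssim R^{-d-\alpha-\beta}$ in the first case and $\lesssim R^{-d-\beta}$ in the second (where $|y|\le 2|x|\le 4R$, so $(1+|y|)^{-d-\beta}\lesssim R^{-d-\beta}$, with $\int_{1\le|y-x|\le|x|}|y-x|^{-d-\alpha}dy$ finite).

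\textbf{Bound on $B_{2R}^c$, and the $L^2$-estimate.} For $|x|>2R$ and $|z|\le 1$, $|x+z|>R$, so $\psi_R(x+z)=0$ and $g$ agrees with $f$ in a unit neighborhood of $x$; hence $I_1(x)$ equals the Taylor remainder of $f$ at $x$, bounded by $C(1+|x|)^{-d-\beta}$. The $g(x)$-part of $I_2$ is $\le C(1+|x|)^{-d-\beta}$. For $\int g(x+z)\sK|z|^{-d-\alpha}dz$, substitute $y=x+z$ and split $\{|y|\ge R/2\}$ by $|y-x|$: on $|y-x|>|x|/2$, pull out $|y-x|^{-d-\alpha}\lesssim|x|^{-d-\alpha}$ and use $\int_{|y|\ge R/2}(1+|y|)^{-d-\beta}dy\lesssim R^{-\beta}$ (crucially using $\beta>0$) to obtain $\lesssim R^{-\beta}(1+|x|)^{-d-\alpha}$; on $|y-x|\le|x|/2$, $|y|\ge|x|/2$ gives $(1+|y|)^{-d-\beta}\lesssim(1+|x|)^{-d-\beta}$, and boundedness of $\int_{1<|y-x|\le|x|/2}|y-x|^{-d-\alpha}dy$ yields $\lesssim(1+|x|)^{-d-\beta}$. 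Squaring \eqref{l6-1-1} and integrating, the three pieces $\int_{B_{2R}}R^{-2d-2\beta}dx$, $\int_{|x|>2R}(1+|x|)^{-2d-2\beta}dx$, and $R^{-2\beta}\int_{|x|>2R}(1+|x|)^{-2d-2\alpha}dx$ are each $\le CR^{-d-2\beta}$ (the last being $\lesssim R^{-d-2\beta-2\alpha}$), proving \eqref{l6-1-1b}.

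\textbf{Main obstacle.} The most delicate case is the boundary layer $R/2-1<|x|\le R/2$ inside $B_{2R}$, where $g(x)=\nabla g(x)=0$ yet $g(x+z)\not\equiv 0$ for small $|z|$; the naive estimate $|I_1(x)|\le\sup_{|z|\le 1}|g(x+z)|\cdot\int_{|z|\le 1}|z|^{-d-\alpha}dz$ diverges at $z=0$. The remedy is to exploit the $C^2$-regularity of $g$ via the quadratic Taylor remainder $|g(x+z)-g(x)-\langle\nabla g(x),z\rangle|\le\tfrac{|z|^2}{2}\sup|\nabla^2 g|$, which, combined with $|\nabla^2 g|\le CR^{-d-\beta}$ on the relevant annulus and integrability of $|z|^{2-d-\alpha}$ near $0$ (since $\alpha<2$), delivers the sharp $CR^{-d-\beta}$ bound.
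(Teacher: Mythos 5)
Your proof is correct and follows essentially the same strategy as the paper's: Taylor's theorem for the near-field part of $\bar\LL g$ where $g=f(1-\psi_R)$, and the decay of $f$ for the far-field part, with a slightly different case decomposition (you split at $|x|=R/2-1$ instead of the paper's $|x|=R/8$). One small slip in wording: in the second sub-case of the $B_{2R}$ estimate, the justification for $(1+|y|)^{-d-\beta}\lesssim R^{-d-\beta}$ should be $|y|\ge R/2$ (which is built into your integration domain), not $|y|\le 4R$ as written — an upper bound on $|y|$ cannot yield upper decay of $(1+|y|)^{-d-\beta}$.
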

\begin{proof}
When $x\in B_{R/8}$,
\begin{align*}
|\bar \LL f(x)-\bar \LL (f\psi_R)(x)|&=\left|\int_{\{|z|>R/4\}}(f(x+z)(1-\psi_R(x+z)) \frac{\sK }{|z|^{d+\alpha}}
dz\right|\\
&\le c_1\int_{\{|z|>R/4\}}(1+|x+z|)^{-d-\beta}\frac{1}{|z|^{d+\alpha}}dz
\le c_2R^{-d-\alpha-\beta},
\end{align*}
where
we used the fact   $\psi_R(x+z)=1$ for $|z|\leq R/4$ and \eqref{l6-1-1a} in the first inequality, and
the fact   $|x+z|\ge R/8$ for every $|x|\le R/8$ and $|z|> R/4$ in the second inequality.

For $x\in B_{2R}\backslash B_{R/8}$,
\begin{align*}
|\bar \LL f(x)-\bar \LL (f\psi_R)(x)|&=\left|{\rm p.v.}\int_{\R^d}\Big(f(x+z)(1-\psi_R(x+z))-f(x)(1-\psi_R(x))\Big)
\frac{\sK }{|z|^{d+\alpha}}dz
\right|\\
&\le c_3\sup_{y\in \R^d:|y-x|\le 1}\left|\nabla^2 \left(f(1-\psi_R)\right)(y)\right|\int_{\{|z|\le 1\}}\frac{|z|^2}{|z|^{d+\alpha}}dz+
c_3|f(x)|\cdot \int_{\{|z|>1\}}\frac{1}{|z|^{d+\alpha}}dz\\
&\quad +c_3\int_{\{|z|>1,|x+z|>R/2\}}\frac{|f(x+z)|}{|z|^{d+\alpha}}dz\\
& \le c_4R^{-d-\beta},
\end{align*}
where
the
first inequality  is due to the fact that $\psi_R(x+z)=1$ if  $|x+z|\leq R/2$,
and the
second inequality follows from
  \eqref{l6-1-1a}.

When $x\notin B_{2R}$,
$\psi_R(x)=0$, and so
\begin{align*}
&|\bar \LL f(x)-\bar \LL (f\psi_R)(x)|\\
&=\left|{\rm p.v.}\int_{\R^d}\Big(f(x+z)(1-\psi_R(x+z))-f(x)\Big)\frac{\sK }{|z|^{d+\alpha}}dz\right|\\
&\le \left|\int_{\{|z|\le 1\}}\left(f(x+z)-f(x)-\langle \nabla f(x),z\rangle\right)\frac{\sK }{|z|^{d+\alpha}}dz\right|+
\left|\int_{\{|z|>1:R/2\le |x+z|\le |x|/2\}}|f(x+z)|\frac{\sK }{|z|^{d+\alpha}}dz\right|\\
&\quad+ \left|\int_{\{|z|>1: |x+z|> |x|/2\}}|f(x+z)|\frac{\sK }{|z|^{d+\alpha}}dz\right|+\left|\int_{\{|z|>1\}}|f(x)|\frac{\sK }{|z|^{d+\alpha}}dz\right|\\
&\le c_5\sup_{y\in \R^d:|y-x|\le 1}|\nabla^2 f(y)|\cdot \int_{\{|z|\le 1\}}\frac{|z|^2}{|z|^{d+\alpha}}dz+c_5(1+|x|)^{-d-\beta}
+c_5\int_{\{|z|\ge |x|/2,|x+z|\ge R/2\}}\frac{1}{|z|^{d+\alpha}(1+|x+z|)^{d+\beta}}dz\\
&\le c_6((1+|x|)^{-d-\beta}+R^{-\beta}(1+|x|)^{-d-\alpha}).
\end{align*}
Here in the second inequality we used \eqref{l6-1-1a} and the fact $|z|\ge |x|/2$ for every $x,z\in \R^d$ with
$|x+z|\le |x|/2$.

Combining
all the estimates above yields the desired conclusion \eqref{l6-1-1}. Furthermore, \eqref{l6-1-1b} is a direct consequence of \eqref{l6-1-1}.
\end{proof}

\medskip

\begin{proof}[Proof of Theorem $\ref{T:1.1}$]  Throughout the proof, we fix $\beta\in (0,\infty]$, $g\in C_c^2 (\R^d)$, $T>0$ and
$f\in \mathscr{S}_{\beta,g,T}$.
Recall that $u_k^\w\in C^1([0,T];L^2(k^{-1}\Z^d;\mu^{ (k)}))$ and
$\bar u\in C^1([0,T]; C_b^2(\R^d))$
are the solutions to parabolic equations
\eqref{e6-4} and \eqref{e6-3}, respectively.
By the definition of $\mathscr{S}_{\beta,g,T}$, there
 are constants $C_1,R_0>0$ so that
\begin{align}\label{t6-1-1a}
|\nabla^i \bar u(t,x)|+\left|\frac{\partial \bar u(t,x)}{\partial t}\right|\le
C_1\left((1+|x|)^{-d-\beta}+\I_{\{|x|\le R_0\}}(x)\right),
\quad t\in [0,T],\ x\in \R^d,\
i=0,1,2,3.
\end{align} We note that here $C_1, R_0$ may depend on $T$. We first consider that $\beta\in (0,\infty)$.

{\bf Case 1:} We consider the case that $\alpha\in (1,2)$. Let $\phi_m:\R_+\times
( B_{2^m} \cap \Z^d )
\to \R^d$ be the solution to
\eqref{e6-5}.
 We extend its definition to   $ \R_+\times \R^d \to \R^d$
by setting $\phi_m(t,x)=\phi_m(t,z)$ for
every $x\in (\prod_{i=1}^d (z_i,z_i+1])\cap B_{2^m}$ with the unique
$z=(z_1,\cdots, z_d)
\in B_{2^m}\cap \Z^d$ and setting
$\phi_m(t,x)=0$ for every $x\notin B_{2^m}$.
For every $R\ge 1$ (which may not be an integer), denote
$\phi_{[R]}(t,x)$ by $\phi_{R}(t,x)$ for simplicity of the notation.

For any $k\ge1$, let $m\ge0$ be the unique integer so that $2^m\le k<2^{m+1}$.
According to \eqref{p6-1-1}, we know that
for a.e. $\w\in \Omega$,
there exists a constant $m_1^*:=m_1^*(\w)\ge1$ such that for all
$m\ge m_1^*$ and $\theta>0$
\begin{equation}\label{t6-1-1}
\begin{split}
&\sup_{t\in \left[0,[2^{m(1+\theta)\alpha}]T\right]}\int_{B_{2^{m(1+\theta)}}}|\phi_{m(1+\theta)}(t,x)|^2\mu(dx)+\int_0^{[2^{m(1+\theta)\alpha}]T} \E_{t,B_{2^{m(1+\theta)}}}^\w
\left(\phi_{m(1+\theta)}(t,\cdot),\phi_{m(1+\theta)}(t,\cdot)\right) dt\\
&\le c_1 m^{\frac{(1+\gamma)\alpha}{(2(d-\alpha))\wedge d}}2^{m(1+\theta)(d+\alpha)}T.
\end{split}
\end{equation}

Recall the definition of $\psi_R$ for $R\ge1$ from  \eqref{e:4.1}.
Given $\theta\in (0,1)$, define
\begin{align}\label{eq:v_k}
v_k(t,x)=\bar u(t,x)\psi_{k^\theta}(x)+k^{-1}\left\langle \nabla \left(\bar u(t,\cdot)\psi_{k^\theta}(\cdot)\right)(x), \phi_{(m+2)(1+\theta)}(k^\alpha t, kx)\right\rangle,\quad t\in [0,T],\ x\in \R^d.
\end{align}
By \eqref{t6-1-1a}, \eqref{t6-1-1} and the fact ${\rm supp}[\psi_{k^\theta}(\cdot)]\subset B_{2^{(m+1)\theta}}$, we obtain that for all $2^{m+1}>k\ge 2^m\ge 2^{m_1^*}$,
\begin{equation}\label{t6-1-2}
\begin{split}
&\int_0^T \int_{\R^d}\left|v_k(t,x)-\bar u(t,x)\right|^2 dx dt\\
&\le
\int_0^T\int_{\R^d} \bar u(t,x)^2(1-\psi_{k^\theta}(x))^2\,dx\,dt+c_2k^{-2}\int_0^{T}\int_{B_{2^{(m+1)\theta}}}\left|\phi_{(m+2)(1+\theta)}(k^\alpha t,kx)\right|^2dxdt\\
&= \int_0^T\int_{\R^d} \bar u(t,x)^2(1-\psi_{k^\theta}(x))^2\,dx\,dt+c_2k^{-(d+2+\alpha)}\int_0^{k^\alpha T}\int_{B_{k2^{(m+1)\theta}}}
\left|\phi_{(m+2)(1+\theta)}(t,x)\right|^2\mu(dx)dt\\
&\le \int_0^T\int_{\R^d} \bar u(t,x)^2(1-\psi_{k^\theta}(x))^2\,dx\,dt+c_2k^{-(d+2+\alpha)}\int_0^{k^\alpha T}\int_{B_{2^{(m+1)(1+\theta)}}}
|\phi_{(m+2)(1+\theta)}(t,x)|^2\mu(dx)dt\\
&\le c_3\left(k^{-\theta(d+2\beta)}+ k^{-(2-\alpha)+(d+\alpha)\theta}(\log^{\frac{(1+\gamma)\alpha}{(2(d-\alpha))\wedge d}} k)\right) T.
\end{split}
\end{equation}

On the other hand, combining \eqref{p6-2-1} with \eqref{p6-3---}
(also by noting that \eqref{t6-1-1a} holds)
immediately yields that for
a.e. $\w\in \Omega$ and
every $\theta_0\in (0,1)$, there exists
$k_1(\w)\ge1$ such that for all $k\ge k_1(\w)$,
\begin{equation}\label{t6-1-3}
\int_0^T \int_{k^{-1}\Z^d}\left|\widehat \LL_t^k \left(\bar u(t,\cdot)\psi_{k^\theta}(\cdot)\right)(x)-\bar \LL \left(\bar u(t,\cdot)\psi_{k^\theta}(\cdot)\right)(x)\right|^2\mu^{  (k)}(dx)dt
\le c_4\left(\pi(k^\alpha T)+\max\{k^{-2\theta_0(2-\alpha)},k^{-\theta_0 d}\}\right)T.
\end{equation}

We restrict $v_k:[0,T]\times \R^d\to \R$ to $v_k:[0,T]\times k^{-1}\Z^d\to \R$ and derive that for all $t\in [0,T]$ and $x\in k^{-1}\Z^d$,
\begin{align}
&\sL^{k}_t v_k(t,\cdot)(x)  \nonumber \\
&=\sL^{k}_t\left(\bar u(t,\cdot)\psi_{k^\theta}(\cdot)\right)(x) \nonumber \\
&\quad +k^{-1}\left\langle
\int_{k^{-1}\Z^d} \Big(\nabla \left(\bar u(t,\cdot)\psi_{k^\theta}(\cdot)\right)(x+z)-\nabla \left(\bar u(t,\cdot)\psi_{k^\theta}(\cdot)\right)(x)\Big)\frac{w_{k^\alpha t,kx,k(x+z)}}{|z|^{d+\alpha}}
\,\mu^{(k)}(dz) ,    \phi_{(m+2)(1+\theta)}(k^\alpha t,kx)   \right\rangle \nonumber \\
&\quad+k^{-1}\left\langle \nabla \left(\bar u(t,\cdot)\psi_{k^\theta}(\cdot)\right)(x), \int_{k^{-1}\Z^d}\left(\phi_{(m+2)(1+\theta)}\left(k^\alpha t, k(x+z)\right)-\phi_{(m+2)(1+\theta)}\left(k^\alpha t, k x\right)\right)
\frac{w_{k^\alpha t,kx,k(x+z)}}{|z|^{d+\alpha}}\,\mu^{(k)}(dz)\right\rangle \nonumber \\
&\quad +k^{-1}\int_{k^{-1}\Z^d}\Big\langle\nabla \left(\bar u(t,\cdot)\psi_{k^\theta}(\cdot)\right)(x+z)-\nabla \left(\bar u(t,\cdot)\psi_{k^\theta}(\cdot)\right)(x) , \nonumber \\
&\qquad\qquad\qquad \qquad \phi_{(m+2)(1+\theta)}(k^\alpha t, k(x+z))-\phi_{(m+2)(1+\theta)}(k^\alpha t, kx)\Big\rangle \frac{w_{k^\alpha t, kx,k(x+z)}}{|z|^{d+\alpha}}\,\mu^{(k)}(dz) \nonumber \\
&=:\sum_{i=1}^4 I_i^{(k)}(t,x).   \label{e:4.11}
\end{align}

According to
\eqref{e6-6}
and \eqref{e6-5a},
\begin{align*}
I_1^{(k)}(t,x)=&\widehat \LL_t^k \left(\bar u(t,\cdot)\psi_{k^\theta}(\cdot)\right)(x)+\left\langle \nabla \left(\bar u(t,\cdot)\psi_{k^\theta}(\cdot)\right)(x), \int_{k^{-1}\Z^d} z \frac{w_{k^\alpha t,kx,k(x+z)}}{|z|^{d+\alpha}}\,\mu^{(k)}(dz)\right\rangle\\
=&\widehat \LL_t^k \left(\bar u(t,\cdot)\psi_{k^\theta}(\cdot)\right)(x)+k^{\alpha-1}\left\langle \nabla \left(\bar u(t,\cdot)\psi_{k^\theta}(\cdot)\right)(x), \int_{\Z^d} z \frac{w_{k^\alpha t,kx,kx+z}}{|z|^{d+\alpha}}\,\mu(dz)\right\rangle\\
=&\widehat \LL_t^k \left(\bar u(t,\cdot)\psi_{k^\theta}(\cdot)\right)(x)+k^{\alpha-1}\left\langle \nabla \left(\bar u(t,\cdot)\psi_{k^\theta}(\cdot)\right)(x), V(k^\alpha t,kx)\right\rangle,
\end{align*}
where $\widehat\LL_t^k$ and $V(t,x)$ are defined by \eqref{e6-6} and \eqref{e6-5a}, respectively.
Thus,
\begin{equation}\label{e:addo1}
I_1^{(k)}(t,x)
=\bar \LL \left(\bar u(t,\cdot)\psi_{k^\theta}(\cdot)\right)(x)+k^{\alpha-1}\left\langle \nabla \left(\bar u(t,\cdot)\psi_{k^\theta}(\cdot)\right)(x), V(k^\alpha t,kx) \right\rangle+K_1^{ (k)}(t,x),
\end{equation}
where  $$K_1^{ (k)}(t,x) := (\widehat \LL_t^k -\bar \sL)\left(\bar u(t,\cdot)\psi_{k^\theta}(\cdot)\right)(x).$$
By \eqref{t6-1-3},
for each
a.e. $\w\in \Omega$ and $\theta_0\in (0,1)$,
we can find $m_2^*:=m_2^*(\w)>0$ such that for all $k\ge 2^{m_2^*}$,
\begin{equation}  \label{e:4.13}
\int_0^T \int_{k^{-1}\Z^d}|K_1^{  (k)}(t,x)|^2\mu^{ (k)}(dx)dt
\le c_5\left(\pi(k^\alpha T)+\max\{k^{-2\theta_0(2-\alpha)},k^{-\theta_0 d}\}\right)T.
\end{equation}

For $I_2^{ (k)}(t,x)$, we write
\begin{align*}
&I_2^{ (k)}(t,x)\\
&=k^{-1}\Bigg\langle
\phi_{(m+2)(1+\theta)}(k^\alpha t, k x),
 \\
 &\quad\quad \int_{k^{-1}\Z^d} \Bigg(\nabla \left(\bar u(t,\cdot)\psi_{k^\theta}(\cdot)\right)(x+z)-\nabla \left(\bar u(t,\cdot)\psi_{k^\theta}(\cdot)\right)(x)
 -\langle \nabla^2 \left(\bar u(t,\cdot)\psi_{k^\theta}(\cdot)\right)(x),z\I_{\{|z|\le 1\}}\rangle\Bigg)\frac{w_{k^\alpha t,kx,k(x+z)}}{|z|^{d+\alpha}}\,\mu^{(k)}(dz)\\
&\quad\quad  +\int_{k^{-1}\Z^d}\langle
 \nabla^2 \left(\bar u(t,\cdot)\psi_{k^\theta}(\cdot)\right)(x),
z\I_{\{|z|\le 1\}}\rangle\frac{w_{k^\alpha t, kx,k(x+z)}}{|z|^{d+\alpha}}\,
\mu^{(k)}(dz)\Bigg\rangle\\
&=:I_{21}^{ (k)}(t,x)+I_{22}^{ (k)}(t,x).
\end{align*}
Using \eqref{t6-1-1} and \eqref{t6-1-1a} as well as the definition of $\psi_{k^\theta}$,
for a.e. $\w\in \Omega$, we can find
$m_3^*:=m_3^*(\w)\ge1$ such that for all $k\ge 2^{m_3^*}$,
\begin{align*}
\int_0^T\int_{k^{-1}\Z^d}|I_{21}^{(k)}(t,x)|^2\mu^{(k)}(dx)dt
&\le c_6k^{-2-d-\alpha}\int_0^{2^{(m+2)\alpha}T}
\int_{B_{2^{(m+2)(1+\theta)}}}
\left|\phi_{(m+2)(1+\theta)}(t,x)\right|^2\mu(dx)dt\\
&\le c_7(k^{-(2-\alpha)+(d+\alpha)\theta}\log^{\frac{(1+\gamma)\alpha}{(2(d-\alpha))\wedge d}}k) T.
\end{align*}

For $I^{(k)}_{22}(t, x)$, we consider its dual.
For any $g\in C_b\left([0,T];L^2(k^{-1}\Z^d;\mu^{(k)})\right)$ and $t\in [0,T]$,
\begin{align*}
&\int_{k^{-1}\Z^d}I_{22}^{(k)}(t,x)g(t,x)\,\mu^{(k)}(dx)\\
&=k^{-1}\int_{k^{-1}\Z^d}\int_{\{|z|\le 1\}}
\langle G(t,x),\phi_{(m+2)(1+\theta)}(k^\alpha t, kx)\otimes z\rangle
\frac{w_{k^\alpha t, k x, k(x+z)}}{|z|^{d+\alpha}}\,\mu^{(k)}(dz)\,\mu^{(k)}(dx)\\
&=k^{-1}\Bigg(\frac{1}{2}\int_{k^{-1}\Z^d}\int_{\{|z|\le 1\}}
\langle G(t,x),\phi_{(m+2)(1+\theta)}(k^\alpha t, kx)\otimes z\rangle
\frac{w_{k^\alpha t, k x, k(x+z)}}{|z|^{d+\alpha}}\mu^{(k)}(dz)\,\mu^{(k)}(dx)\\
&\qquad\qquad -\frac{1}{2}\int_{k^{-1}\Z^d}\int_{\{|z|\le 1\}}
\langle G(t, x+z),\phi_{(m+2)(1+\theta)}(k^\alpha t, k(x+z))\otimes z\rangle
\frac{w_{k^\alpha t, k x, k(x+z)}}{|z|^{d+\alpha}}\,\mu^{(k)}(dz)\,\mu^{(k)}(dx)\Bigg)\\
&=(2k)^{-1}\int_{k^{-1}\Z^d}\int_{\{|z|\le 1\}}
\left\langle G(t, x),\phi_{(m+2)(1+\theta)}(k^\alpha t, kx)\otimes z\right\rangle- \left\langle G(t, x+z),\phi_{(m+2)(1+\theta)}(k^\alpha t, k(x+z))\otimes z\right\rangle\\
&\qquad\qquad\times
\frac{w_{k^\alpha t, k x, k(x+z)}}{|z|^{d+\alpha}}\,\mu^{(k)}(dz)\,\mu^{(k)}(dx).
\end{align*}
Here, $G(t,x):=g(t,x)\nabla^2\left(\bar u(t,\cdot)\psi_{k^\theta}(\cdot)\right)(x)$, and in the second equality we
 used the change of variables $x=\tilde x+\tilde z$ and $z=-\tilde z$.
Hence,
\begin{align*}
&\left|\int_{k^{-1}\Z^d}I_{22}^{(k)}(t,x)g(t,x)\,\mu^{(k)}(dx)\right|\\
&\le (2k)^{-1}\Bigg(\int_{B_{k^\theta+1}}\int_{\{|z|\le 1\}}|\phi_{(m+2)(1+\theta)}(k^\alpha t, k x)|
|G(t,x+z)-G(t,x)||z|\frac{w_{k^\alpha t, k x,k(x+z)}}{|z|^{d+\alpha}}\,\mu^{(k)}(dz)\,\mu^{(k)}(dx)\\
&\qquad  +\int_{B_{k^\theta}}\int_{\{|z|\le 1\}}|G(t,x)|
|\phi_{(m+2)(1+\theta)}(k^\alpha t, k (x+z))-\phi_{(m+2)(1+\theta)}(k^\alpha t, k x)||z|\frac{w_{k^\alpha t, k x,k(x+z)}}{|z|^{d+\alpha}}\,\mu^{(k)}(dz)\,\mu^{(k)}(dx)\Bigg)\\
&=:I_{221}^{(k)}(t)+I_{222}^{(k)}(t),
\end{align*}
where in the inequality we used again the change of variables $x=\tilde x+\tilde z$ and $z=-\tilde z$,
and also  the fact that $\text{supp} [G] \subset B_{k^\theta}$.
On the other hand,
by
the Cauchy-Schwartz inequality, we have for all $k\ge 2^{m_1^*}$,
\begin{align*}
I_{221}^{(k)}(t) &\le c_8k^{-1}
\left(\int_{B_{k^\theta+1}}|\phi_{(m+2)(1+\theta)}(k^\alpha t, k x)|^2\left(\int_{\{|z|\le 1\}}\frac{|z|^2}{|z|^{d+\alpha}}\,\mu^{(k)}(dz)\right)
\,\mu^{(k)}(dx)\right)^{1/2}\\
&\qquad\qquad\times\left(\int_{k^{-1}\Z^d}\int_{\{|z|\le 1\}}\frac{|G(t,x+z)-G(t,x)|^2w_{k^\alpha t, k x,k(x+z)}}{|z|^{d+\alpha}}
\,\mu^{(k)}(dz)\,\mu^{(k)}(dx)\right)^{1/2}\\
&\le c_{9}k^{-1}\left(\int_{B_{k^\theta+1}}|\phi_{(m+2)(1+\theta)}(k^\alpha t, k x)|^2\,\mu^{(k)}(dx)\right)^{1/2}  \mathscr{E}^{(k),\w}(G(t,\cdot),G(t,\cdot)) ^{1/2}\\
&\le c_{10}k^{-1}\left(\int_{B_{2k^\theta}}|\phi_{(m+2)(1+\theta)}(k^\alpha t, k x)|^2\,\mu^{(k)}(dx)\right)^{1/2}
\left( \mathscr{E}^{(k),\w}(g(t,\cdot),g(t,\cdot))^{1/2}+\|g(t,\cdot)\|_{L^2(k^{-1}\Z^d;\mu^{(k)})}
\right),
\end{align*}
where
\begin{align*}
   \mathscr{E}^{(k),\w}(G(t,\cdot),G(t,\cdot))
 &
 =\frac{1}{2}
 \sum_{i,j=1}^d
 \int_{k^{-1}\Z^d}\int_{k^{-1}\Z^d}
\frac{(G^{(ij)}(t,x+z)-G^{(ij)}(t, x))^2w_{k^\alpha t, k x,k(x+z)}(\w)}{|z|^{d+\alpha}}\,\mu^{(k)}(dz)\,\mu^{(k)}(dx)
\end{align*} and
$$\mathscr{E}^{(k),\w}(g(t,\cdot),g(t,\cdot)) =\frac{1}{2}\int_{k^{-1}\Z^d}\int_{k^{-1}\Z^d}\frac{\left|g(t,x+z)-g(t,x)\right|^2w_{k^\alpha t,kx,k(x+z)}}{|z|^{d+\alpha}}
\mu^{ (k)}(dz)\mu^{ (k)}(dx)$$ with $$G^{(ij)}(t,x)=g(t,x)\frac{\partial^2 \left(\bar u(t,\cdot) \psi_{k^\theta}(\cdot)\right)(x)}
{\partial x_i \partial x_j},\quad 1\le i,j\le d,$$
and in the last inequality we used \eqref{t6-1-1a}.

By the Cauchy-Schwartz inequality and \eqref{t6-1-1a} again,
\begin{align*}
I_{222}^{(k)}(t)
&\le c_{11}k^{-1}\left(\int_{B_{k^\theta}}G^2(t,x)\left(\int_{\{|z|\le 1\}}\frac{|z|^2}{|z|^{d+\alpha}}\,\mu^{(k)}(dz)\right)\,
\mu^{(k)}(dx)\right)^{1/2}\\
&\quad\times\left(\int_{B_{k^\theta}}\int_{\{|z|\le 1\}}
\left|\phi_{(m+2)(1+\theta)}(k^\alpha t, k(x+z))-\phi_{(m+2)(1+\theta)}(k^\alpha t, k x)\right|^2
\frac{w_{k^\alpha t, k x,k(x+z)}}{|z|^{d+\alpha}}\,\mu^{(k)}(dz)\,\mu^{(k)}(dx)\right)^{1/2}\\
&\le c_{12}k^{-1} \left(\int_{B_{k^\theta}}g^2(t,x)\,
\mu^{(k)}(dx)\right)^{1/2}\\
&\quad\times \left(k^{-(d-\alpha)}
\int_{B_{k^{1+\theta}}}
\int_{B_{2k^{1+\theta}}}
\left|\phi_{(m+2)(1+\theta)}(k^\alpha t, x)-\phi_{(m+2)(1+\theta)}(k^\alpha t, y)\right|^2
\frac{w_{k^\alpha t, x,y}}{|x-y|^{d+\alpha}}
\,\mu (dy)\,\mu(dx)
\right)^{1/2}.
\end{align*}

Putting all the estimates together yields that for every $g\in C_b\left([0,T];L^2(k^{-1}\Z^d;\mu^{ (k)})\right)$ and $t\in [0,T]$,
\begin{align*}
& \Big|\int_{k^{-1}\Z^d}I_{22}^{ (k)}(t,x)g(t,x)\mu^{ (k)}(dx)\Big|\\
&\le c_{13}k^{-1}\left(
\int_{B_{2k^\theta}}
\left|\phi_{(m+2)(1+\theta)}(k^\alpha t,kx)\right|^2\mu^{  (k)}(dx)\right)^{1/2}\left(\E_t^{(k)}\left(g(t,\cdot),g(t,\cdot)\right)^{1/2}+\|g(t,\cdot)\|_{L^2(k^{-1}\Z^d;\mu^{ (k)})}
\right)\\
&\quad +c_{13}k^{-\frac{d+2-\alpha}{2}}\E_{k^\alpha t,B_{2^{(m+2)(1+\theta)}}}\left(\phi_{(m+2)(1+\theta)}(k^\alpha t,\cdot),\phi_{(m+2)(1+\theta)}(k^\alpha t,\cdot)\right)^{1/2}\|g(t,\cdot)\|_{L^2(k^{-1}\Z^d;\mu^{(k)})}.
\end{align*}
This, along with \eqref{t6-1-1} and the Young inequality, gives us
that for every
$k\ge 2^{m_3^*}$,
\begin{align*}
&\int_0^T \int_{k^{-1}\Z^d}I_{22}^{  (k)}(t,x)g(t,x)\mu^{(k)}(dx)d t\\
&\le {  \frac{1}{4} } \int_0^T \E_t^{(k)}\left(g(t,\cdot),g(t,\cdot)\right)dt
+ {  \frac{1}{4} }  \int_0^T \|g(t,\cdot)\|_{L^2(k^{-1}\Z^d;\mu^{  (k)})}^2dt\\
&\quad +c_{14}k^{-(d+2+\alpha)}\int_0^{2^{(m+2)\alpha}T}\int_{B_{2^{(m+2)(1+\theta)}}}\left|\phi_{(m+2)(1+\theta)}(t,x)\right|^2
\mu(dx)
dt\\
&\quad
+c_{14}k^{-(d+2)}\int_{0}^{2^{(m+2)\alpha}T}
\E_{t,B_{2^{(m+2)(1+\theta)}}}\left(\phi_{(m+2)(1+\theta)}(t,\cdot),\phi_{(m+2)(1+\theta)}(t,\cdot)\right)
dt\\
&\le {  \frac{1}{4} }  \int_0^T \E_t^{(k)}\left(g(t,\cdot),g(t,\cdot)\right)dt
+{ \frac{1}{4} }  \int_0^T \|g(t,\cdot)\|_{L^2(k^{-1}\Z^d;\mu^{(k)})}^2dt+c_{15}(k^{-(2-\alpha)+(d+\alpha)\theta}
\log^{\frac{(1+\gamma)\alpha}{(2(d-\alpha))\wedge d}} k )T.
\end{align*}

Furthermore, by the change of variable, we get
\begin{align*}
I_3^{ (k)}(t,x)
&=k^{\alpha-1}\left\langle \nabla \left(\bar u(t,\cdot)\psi_{k^\theta}(\cdot)\right)(x),
\int_{\Z^d}\left(\phi_{(m+2)(1+\theta)}(k^\alpha t,y)-\phi_{(m+2)(1+\theta)}(k^\alpha t, kx)\right)\frac{w_{k^\alpha t,kx,y}}{|y-kx|^{d+\alpha}}\mu(dy)\right\rangle\\
&=k^{\alpha-1}\Bigg\langle \nabla \left(\bar u(t,\cdot)\psi_{k^\theta}(\cdot)\right)(x), \LL_{t,B_{2^{(m+2)(1+\theta)}}}\phi_{(m+2)(1+\theta)}(k^\alpha t,\cdot)(x)\\
&\quad +
\int_{B_{2^{(m+2)(1+\theta)}}^c}\left(\phi_{(m+2)(1+\theta)}(k^\alpha t,y)-\phi_{(m+2)(1+\theta)}(k^\alpha t, kx)\right)\frac{w_{k^\alpha t,kx,y}}{|y-kx|^{d+\alpha}}\mu(dy)\Bigg\rangle\\
&=k^{\alpha-1}\Bigg\langle \nabla \left(\bar u(t,\cdot)\psi_{k^\theta}(\cdot)\right)(x), \frac{\partial}{\partial t}\phi_{(m+2)(1+\theta)}(\cdot,kx)(k^\alpha t)-V(k^\alpha t,kx) +\oint_{B_{2^{(m+2)(1+\theta)}}}V(k^\alpha t,\cdot)\,d\mu\\
&\quad+ \int_{B_{2^{(m+2)(1+\theta)}}^c}\left(\phi_{(m+2)(1+\theta)}(k^\alpha t,y)-\phi_{(m+2)(1+\theta)}(k^\alpha t, kx)\right)\frac{w_{k^\alpha t,kx,y}}{|y-kx|^{d+\alpha}}\mu(dy)\Bigg\rangle\\
&=:k^{\alpha-1}\left\langle \nabla \left(\bar u(t,\cdot)\psi_{k^\theta}(\cdot)\right)(x), \frac{\partial}{\partial t}\phi_{(m+2)(1+\theta)}(\cdot,kx)(k^\alpha t)-V(k^\alpha t,kx)\right\rangle
+I_{31}^{(k)}(t,x)+I_{32}^{(k)}(t,x).
\end{align*}
Here the third inequality follows from the corrector equation \eqref{e6-5}.
According to \eqref{p6-1-5}  and the fact that $2^m\le k<2^{m+1}$, we know that, for a.e. $\w\in \Omega$ and every
$\theta_0\in (\alpha/d,1)$,
there is a random variable $m_4^*:=m_4^*(\w)>1$ such that
for all $k\ge 2^{m_4^*}$,
\begin{align*}
\int_0^T\int_{k^{-1}\Z^d}|I_{31}^{(k)}(t,x)|^2\,\mu^{(k)}(dx)\,dt &\le k^{2(\alpha-1)-\alpha}\|\nabla (\bar u(t,\cdot) \psi_{k^\theta}(\cdot))\|_\infty^2 \int_0^{k^\alpha T}
\left| \oint_{B_{2^{(m+2)(1+\theta)}}}V( t,\cdot)\,d\mu
\right|^2\,dt\\
 &\le c_{16}k^{2\alpha-2
 -\theta_0(1+\theta) d} T.
\end{align*}
Note that $\phi_{(m+2)(1+\theta)}(y)=0$ for all $y\in B_{2^{(m+2)(1+\theta)}}^c$ and
$\text{supp} [ \psi_{k^\theta} ] \subset B_{k^\theta}$.
Hence,
for all $k\ge 2^{m_1^*}$,
\begin{align*}
&\int_0^T\int_{k^{-1}\Z^d}|I_{32}^{(k)}(t,x)|^2\,\mu^{(k)}(dx)\,d t\\
&\le
k^{2(\alpha-1)}\|\nabla (\bar u(t,\cdot)\psi_{k^\theta}(\cdot))\|_\infty^2\int_0^T
\int_{B_{k^\theta}}|\phi_{(m+2)(1+\theta)}(k^\alpha t, k x)|^2 \bigg(\sum_{y\in k^{-1}\Z^d:|y-k x|\ge k^{1+\theta}}\frac{w_{k^\alpha t, kx,y}}{|y-k x|^{d+\alpha}}\bigg)^2\,
\mu^{(k)}(dx)\,dt\\
&\le c_{17}k^{-2(1+\theta\alpha)}\int_0^T\int_{B_{k^\theta}}|\phi_{(m+2)(1+\theta)}(k^\alpha t, k x)|^2\,\mu^{(k)}(dx)\,dt\le c_{18}
k^{-(2-\alpha)+(d+\alpha)\theta}(\log^{\frac{(1+\gamma)\alpha}{(2(d-\alpha))\wedge d}}k) T.
\end{align*}
where the last inequality follows from  \eqref{t6-1-1}.

Summarizing all above estimates for $I_3^{(k)}(t,x)$, we
conclude that
\begin{equation}\label{t3-1-6}
\begin{split}
I_3^{(k)}(t,x)&= k^{\alpha-1}\left\langle \nabla \left(\bar u(t,\cdot)\psi_{k^\theta}(\cdot)\right)(x), \frac{\partial}{\partial t}\phi_{(m+2)(1+\theta)}(\cdot,kx)(k^\alpha t)-V(k^\alpha t,kx)\right\rangle
+K_3^{(k)}(t, x),
\end{split}
\end{equation}
where for all $k\ge 2^{\max\{m_1^*, m_3^*\}}$,
\begin{equation} \label{e:4.15}
\int_0^T\int_{k^{-1}\Z^d}|K_3^{(k)}(t,x)|^2\,\mu^{(k)}(dx)\,d t\le c_{19}\max\{k^{-(2-\alpha)+(d+\alpha)\theta}
,k^{2\alpha-2+\theta\alpha-\theta_0(1+\theta) d}\}(\log^{\frac{\alpha(1+\gamma)}{(2(d-\alpha))\wedge d}}k )T.
\end{equation}
In particular, according to \eqref{e:addo1} and \eqref{t3-1-6}, one can see that the term with the coefficient
$k^{\alpha-1} V(k^\alpha, kx)$ in $I_1^{ (k)}(t,x)$ vanishes (by the associated terms in $I_3^{(k)}(t,x)$),
which partly explains why we define the function $v_k(t,x)$
 as in  \eqref{eq:v_k}.

For the estimate for $I_4^{(k)}(t,x)$, we consider  the cases $x\in B_{{3k^\theta}/{2}}$ and $x\notin B_{{3k^\theta}/{2}}$ separately.
When $x\in B_{{3k^\theta}/{2}}^c$, we get from the fact $\nabla \left(\bar u(t, \cdot)\psi_{k^\theta}(\cdot)\right)(y)\neq 0$ only if $y\in B_{k^\theta}$ that
for every $k\ge 2^{m_1^*}$,
\begin{align*}
& |I_4^{(k)}(t,x)|\\
&=k^{-1}\left|\int_{B_{k^\theta}}\frac{
\left\langle \nabla \left(\bar u(t,\cdot)\psi_{k^\theta}(\cdot)\right)(y),
\phi_{(m+2)(1+\theta)}\left(k^\alpha t, k y\right)-\phi_{(m+2)(1+\theta)}\left(k^\alpha t, k x\right)\right\rangle
w_{k^\alpha t, k x,k y}}{|y-x|^{d+\alpha}}\,\mu^{(k)}(dy)\right|\\
&\le c_{20}\|\nabla (\bar u(t,\cdot)\psi_{k^\theta}(\cdot))\|_\infty k^{-1}(1+|x|)^{-d-\alpha}\cdot
\left(\int_{ B_{k^\theta}}\left|\phi_{(m+2)(1+\theta)}\left(k^\alpha t, k y\right)\right|\mu^{(k)}(dy)+
k^{\theta d}|\phi_{(m+2)(1+\theta)}(k^\alpha t, k x)|\right)\\
&\le c_{21}k^{-1}(1+|x|)^{-d-\alpha}
\left(k^{\theta d/2}\left(\int_{  B_{k^\theta}}\left|\phi_{(m+2)(1+\theta)}\left(k^\alpha t, k y\right)\right|^2\mu^{(k)}(dy)\right)^{1/2}+k^{\theta d}|\phi_{(m+2)(1+\theta)}(k^\alpha t, k x)|\right),
\end{align*}
where the first inequality
follows from the fact that $|y-x|\ge c_{22}(1+|x|)$ for all $y\in B_{k^\theta}$ and $x\in B_{{3k^\theta}/{2}}^c$.

When $x\in B_{{3k^\theta}/{2}}$, it holds that
\begin{align*}
&|I_4^{(k)}(t,x)|\\
&=k^{-1}\Bigg|
\int_{k^{-1}B_{{2^{(m+2)(1+\theta)}}}}
\left\langle \nabla \left(\bar u(t,\cdot)\psi_{k^\theta}(\cdot)\right)(y)-\nabla \left(\bar u(t,\cdot)\psi_{k^\theta}(\cdot)\right)(x), \phi_{(m+2)(1+\theta)}\left(k^\alpha t, k y\right)-\phi_{(m+2)(1+\theta)}\left(k^\alpha t, k x\right)\right\rangle\\
&\qquad\times \frac{w_{k^\alpha t, k x,k y}}{|y-x|^{d+\alpha}}\,\mu^{(k)}(dy)+
\langle \nabla \bar u(t, x)\psi_{k^\theta}(x), \phi_{(m+2)(1+\theta)}(k^\alpha t, k x)\rangle
\int_{B_{k^{-1}{2^{(m+2)(1+\theta)}}}^c}
\frac{w_{k^\alpha t, k x,k y}}{|y-x|^{d+\alpha}}\,\mu^{(k)}(dy)\Bigg|\\
&\le c_{23}k^{-1}\left(\int_{B_{k^{-1}2^{(m+2)(1+\theta)}}}
\frac{\left|\nabla \left(\bar u(t,\cdot)\psi_{k^\theta}(\cdot)\right)(y)-\nabla \left(\bar u(t,\cdot)\psi_{k^\theta}(\cdot)\right)(x)\right|^2}{|y-x|^{d+\alpha}}\,\mu^{(k)}(dy)\right)^{1/2}\\
&\qquad\qquad\times
\left(\int_{B_{k^{-1}2^{(m+2)(1+\theta)}}}
\frac{\left|\phi_{(m+2)(1+\theta)}\left(k^\alpha t, k y\right)-\phi_{(m+2)(1+\theta)}\left(k^\alpha t, k x\right)\right|^2w_{k^\alpha t, k x,k y}}{|y-x|^{d+\alpha}}\,\mu^{(k)}(dy)
\right)^{1/2}\\
&\quad +c_{23}k^{-1}|\phi_{(m+2)(1+\theta)}(k^\alpha t, k x)|\cdot\left(\int_{\{y\in k^{-1}\Z^d:|y-x|\ge k^\theta/2\}}\frac{1}{|y-x|^{d+\alpha}}\,\mu^{(k)}(dy)\right)\\
&\le c_{24}k^{-1}\Bigg(\left(\int_{B_{k^{-1}2^{(m+2)(1+\theta)}}}
\frac{\left|\phi_{(m+2)(1+\theta)}\left(k^\alpha t, k y\right)-\phi_{(m+2)(1+\theta)}\left(k^\alpha t, k x\right)\right|^2w_{k^\alpha t, k x,k y}}{|y-x|^{d+\alpha}}\,\mu^{(k)}(dy)
\right)^{1/2}\\
&\quad +k^{-\theta\alpha}|\phi_{(m+2)(1+\theta)}(k^\alpha t, k x)|\Bigg),
\end{align*}
where in the equality above
we used the fact $\nabla \left(\bar u(t,\cdot)\psi_{k^\theta}(\cdot)\right)(y)=\phi_{(m+2)(1+\theta)}(k^\alpha t, k y)=0$ for all $y\in B_{k^{-1}{2^{(m+2)(1+\theta)}}}^c$, and
in the first inequality we used the Cauchy-Schwarz inequality and the fact $|y-x|\ge k^\theta/2$ for every $y\in B_{k^{-1}{2^{(m+2)(1+\theta)}}}^c$ and $x\in B_{{3k^\theta}/{2}}$.   Hence, we obtain that
\begin{align*}
|I_4^{  (k)}(t,x)|
&\le c_{25}
\Bigg(
k^{-1}\Big(\int_{k^{-1}B_{2^{(m+2)(1+\theta)}}}
\frac{|\phi_{(m+2)(1+\theta)}(k^\alpha t,ky)-\phi_{(m+2)(1+\theta)}(k^\alpha t,kx)|^2 w_{k^\alpha t,kx,ky}}{|y-x|^{d+\alpha}}\mu^{ (k)}(dy)\Big)^{1/2}\I_{B_{{3k^\theta}/{2}}}(x)\\
&\qquad\quad \,\, +k^{-1}(1+|x|)^{-d-\alpha}k^{\theta d/2}\left(\int_{B_{k^\theta}}
\left|\phi_{(m+2)(1+\theta)}(k^\alpha t,ky)\right|^2\mu^{ (k)}(dy)\right)^{1/2}\I_{B_{{3k^\theta}/{2}}^c}(x)\\
&\qquad\quad\,\,
+k^{-1-\theta\alpha}\left|\phi_{(m+2)(1+\theta)}(k^\alpha t, kx)\right|\Bigg).
\end{align*}
Thus, we get that for all $k\ge 2^{m_3^*}$
\begin{align*}
 \int_0^T \int_{k^{-1}\Z^d}|I_4^{ (k)}(t,x)|^2\mu^{ (k)}(dx)dt&\le c_{26}\Bigg[
k^{-(d+2+\alpha)}\left( k^{-2\theta\alpha} + k^{\theta d}
\int_{B_{{3k^\theta}/{2}}^c}(1+|x|)^{-2d-2\alpha}\mu^{  (k)}(dx)\right)\\
&\qquad\quad\quad\times\left(\int_0^{2^{(m+2)\alpha} T}\int_{B_{2^{(m+2)(1+\theta)}}}\left|\phi_{(m+2)(1+\theta)}(t,x)\right|^2\mu(dx)dt\right)\\
&\qquad\quad +k^{-(d+2)}\int_0^{2^{(m+2)\alpha} T} \E_{t,B_{2^{(m+2)(1+\theta)}}}\left(\phi_{(m+2)(1+\theta)}(t,\cdot),\phi_{(m+2)(1+\theta)}(t,\cdot)\right) dt\Bigg]\\
&\le c_{27}k^{-(2-\alpha)+(d+\alpha)\theta}(\log^{\frac{(1+\gamma)\alpha}{(2(d-\alpha))\wedge d}}k)T,
\end{align*} where the last inequality is a consequence of \eqref{t6-1-1}.

Note that by \eqref{e:4.11}, \eqref{e:addo1} and \eqref{t3-1-6},
 \begin{align*}
 \LL_t^k v_k(t,\cdot)(x)
  & =  ( I^{  (k)}_1 (t,x) +I^{  (k)}_3 (t,x) )+I^{ (k)}_2(t,x)+I^{ (k)}_4 (t,x)\\
&=
\bar \LL \left(\bar u(t,\cdot)\psi_{k^\theta}(\cdot)\right)(x)
+k^{\alpha-1}\left\langle \nabla \left(\bar u(t,\cdot)\psi_{k^\theta}(\cdot)\right)(x), \frac{\partial}{\partial t}\phi_{(m+2)(1+\theta)}(\cdot,kx)(k^\alpha t)\right\rangle\\
&\quad
+ [( K_1^{(k)}(t,x)+K^{(k)}_3(t,x) ) + I^{ (k)}_{21}(t,x) +I^{ (k)}_4(t,x)] +I^{ (k)}_{22}(t,x) \\
&=: \bar \LL \left(\bar u(t,\cdot)\psi_{k^\theta}(\cdot)\right)(x)
+k^{\alpha-1}\left\langle \nabla \left(\bar u(t,\cdot)\psi_{k^\theta}(\cdot)\right)(x),
\frac{\partial}{\partial t}\phi_{(m+2)(1+\theta)}(\cdot,kx)(k^\alpha t)\right\rangle
+ J^{ (k)}(t,x)+I^{ (k)}_{22}(t,x).
 \end{align*}
  Combining the  above estimates for $K_1^{(k)}(t,x)$ and $K^{(k)}_3(t,x)$ in \eqref{e:4.13} and \eqref{e:4.15}
 with the estimates for  $I_{21}^{ (k)}(t,x)$ and $I_4^{(k)}(t,x)$,  we find that for
a.e. $\w\in \Omega$,
there exists $k_0^*:=2^{\max\{m_1^*,m_2^*,m_3^*\}}$ such that for all $k\ge k_0^*$
and $\theta_0\in (\alpha/d,1)$,
\begin{equation}\label{t6-1-4}\begin{split}
 \int_0^T \int_{k^{-1}\Z^d}|J^{ (k)}(t,x)|^2\mu^{  (k)}(dx)dt\le & c_{28}\max\{k^{-(2-\alpha)+(d+\alpha)\theta}
,k^{2\alpha-2+\theta\alpha-\theta_0(1+\theta) d}\}
(\log^{\frac{(1+\gamma)\alpha}{(2(d-\alpha))\wedge d}}k)
T \\
&+c_{28}\left(\pi(k^\alpha T)+\max\{k^{-2\theta_0(2-\alpha)},k^{-\theta_0 d}\}\right)T,
\end{split}\end{equation}
and, for every $g\in C_b\left([0,T];L^2(k^{-1}\Z^d;\mu^{  (k)})\right)$, it holds that
\begin{equation}\label{t6-1-5}
\begin{split}
& \left|\int_0^T \int_{k^{-1}\Z^d}I_{22}^{  (k)}(t,x)g(t,x)\mu^{ (k)}(dx)dt\right|\\
&\le c_{29} k^{-(2-\alpha)+(d+\alpha)\theta}(\log^{\frac{(1+\gamma)\alpha}{(2(d-\alpha))\wedge d}}k)T+\frac{1}{2}
\int_0^T \E_t^{(k)}\left(g(t,\cdot),g(t,\cdot)\right)(x)+\frac{1}{2}\int_0^T \|g(t,\cdot)\|_{L^2(k^{-1}\Z^d;\mu^{ (k)})}^2 dt.
\end{split}
\end{equation}
 Therefore,
we deduce from the definition of $v_k$ that
for all $t\in [0,T]$ and $x\in k^{-1}\Z^d$,
\begin{equation}\label{e:dddfff}\begin{split}
&
\frac{\partial}{\partial t}\left(u_k-v_k\right)(t,x)-\LL_t^k (u_k-v_k)(t,\cdot)(x)\\
&= h(t,x)
-\left(\frac{\partial}{\partial t}\left(\bar u(t,\cdot)\psi_{k^\theta}(\cdot)\right)(x)-\bar \LL \left(\bar u(t,\cdot)\psi_{k^\theta}(\cdot)\right)(x)\right)
\\
&\quad -k^{-1}\left\langle \frac{\partial}{\partial t}\nabla \bar u(t,x)\psi_{k^\theta}(x), \phi_{(m+2)(1+\theta)}(k^\alpha t,kx)\right\rangle+
J^{ (k)}(t,x)+I_{22}^{(k)}(t,x)\\
&=\bar \LL  \bar u(x)-\bar \LL (\bar u \psi_{k^\theta})(x)+
\frac{\partial \bar u}{\partial t}(t,x)\left(1-\psi_{k^\theta}(x)\right)\\
&\quad -k^{-1}\left\langle \frac{\partial}{\partial t}\nabla \bar u(t,x)\psi_{k^\theta}(x), \phi_{(m+2)(1+\theta)}(k^\alpha t,kx)\right\rangle+J^{  (k)}(t,x)+I_{22}^{  (k)}(t,x)\\
&=: Q^{(k)}(t,x)+I_{22}^{  (k)}(t,x),
\end{split}\end{equation}
where we used the fact that $u_k$ and $\bar u$ satisfy equations \eqref{e6-4} and \eqref{e6-3}, respectively.
By \eqref{t6-1-5},
\begin{equation}\label{t6-1-6}
\begin{split}
&\int_0^T \int_{k^{-1}\Z^d}|Q^{ (k)}(t,x)|^2\mu^{ (k)}(dx)dt\\
&\le
c_{30}\Bigg(\int_0^T \int_{k^{-1}\Z^d}|J^{ (k)}(t,x)|^2\mu^{ (k)}(dx)dt+\int_0^T  \int_{k^{-1}\Z^d}
|\bar \LL  \bar u(t,\cdot)(x)-\bar \LL (\bar u(t,\cdot) \psi_{k^\theta}(\cdot))(x)|^2\mu^{ (k)}(dx)dt\\
&\qquad\qquad +\int_0^T\int_{k^{-1}\Z^d}\left|\frac{\partial \bar u}{\partial t}(t,x)\right|^2(1-\psi_{k^\theta}(x))^2\mu^{  (k)}(dx)dt\\
&\qquad\qquad +k^{-(d+2+\alpha)}\int_0^{2^{(m+2)\alpha}T}
\int_{B_{2^{(m+2)(1+\theta)}}}
|\phi_{(m+2)(1+\theta)}(t,x)|^2\mu(dx)dt\Bigg) \\
&\le c_{31}\max\{k^{-(2-\alpha)+(d+\alpha)\theta}
,k^{2\alpha-2+\theta\alpha-\theta_0(1+\theta) d},k^{-\theta(d+2\beta)},
{k^{-2\theta_0(2-\alpha)},k^{-\theta_0 d}}\}
(\log^{\frac{(1+\gamma)\alpha}{(2(d-\alpha))\wedge d}} k)T +c_{31}\pi\left(k^\alpha T\right)T,
\end{split}
\end{equation}
which the last inequality is due to
\eqref{l6-1-1b},
\eqref{t6-1-1}, \eqref{t6-1-2} and \eqref{t6-1-4}.

Multiplying $u_k-v_k$ on both sides of \eqref{e:dddfff} and taking the integration with respect to
$\mu^{ (k)}$
yield that
\begin{align*}
&\frac{1}{2}\frac{\partial}{\partial t}\left(\|u_k(t,\cdot)-v_k(t,\cdot)\|_{L^2(k^{-1}\Z^d;\mu^{ (k)})}^2\right)
+\E_t^{(k)}\left((u_k-v_k)(t,\cdot), (u_k-v_k)(t,\cdot)\right)\\
&=\int_{k^{-1}\Z^d}I_{22}^{  (k)}(t,x)(u_k(t,x)-v_k(t,x))\mu^{ (k)}(dx)+\int_{k^{-1}\Z^d}Q^{  (k)}(t,x)(u_k(t,x)-v_k(t,x))\mu^{ (k)}(dx)\\
&\le \int_{k^{-1}\Z^d}I_{22}^{ (k)}(t,x)(u_k(t,x)-v_k(t,x))\mu^{ (k)}(dx)+\int_{k^{-1}\Z^d}|Q^{ (k)}(t,x)|^2\mu^{ (k)}(dx)+
\frac{1}{4}\|u_k(t,\cdot)-v_k(t,\cdot)\|_{L^2(k^{-1}\Z^d;\mu^{ (k)})}^2.
\end{align*}
Hence, by \eqref{t6-1-5} and \eqref{t6-1-6}, for a.e. $\w\in \Omega$ and all $k\ge k_0^*(\w)$,
\begin{align*}
&\sup_{t\in [0,T]}\|u_k(t,\cdot)-v_k(t,\cdot)\|^2_{L^2(k^{-1}\Z^d;\mu^{  (k)})}
+
\int_0^T \E_t^{(k)}\left((u_k-v_k)(t,\cdot), (u_k-v_k)(t,\cdot)\right)\,dt\\
&\le \frac{1}{2}\int_0^T \E_t^{(k)}\left((u_k-v_k)(t,\cdot), (u_k-v_k)(t,\cdot)\right)\,dt+
\frac{3}{4}\int_0^T \|u_k(t,\cdot)-v_k(t,\cdot)\|^2_{L^2(k^{-1}\Z^d;\mu^{ (k)})}\,dt\\
&\quad+ c_{32}\max\{k^{-(2-\alpha)+(d+\alpha)\theta}
,k^{2\alpha-2+\theta\alpha-\theta_0(1+\theta) d},k^{-\theta(d+2\beta)},
  k^{-2\theta_0(2-\alpha)},k^{-\theta_0 d}\}
(\log^{\frac{(1+\gamma)\alpha}{(2(d-\alpha))\wedge d}} k)T
+c_{32}\pi\left(k^\alpha T\right)T,
\end{align*}
where we used the fact $u_k(0,x)=v_k(0,x)=\bar u(0,x)=g(x)$ (also thanks to $\phi_{(m+2)(1+\theta)}(0,x)=0$ for all
$x\in \Z^d$).
Choosing $\theta=\frac{2-\alpha}{2d+\alpha+2\beta}$ and $\theta_0\in ({\alpha}/{d},1)$ close to $1$ (note that $d>\alpha$), we obtain
\begin{align*}
\max\{k^{-(2-\alpha)+(d+\alpha)\theta}
,k^{2\alpha-2+\theta\alpha-\theta_0(1+\theta) d},k^{-\theta(d+2\beta)},
k^{-2\theta_0(2-\alpha)},k^{-\theta_0 d} \}
\le k^{-\frac{(2-\alpha)(d+2\beta)}{2d+\alpha+2\beta}}.
\end{align*}
This, along with the differential inequality above, we have for all $k\ge k_0^*$,
\begin{align*}
\sup_{t\in [0,T]}\|u_k(t,\cdot)-v_k(t,\cdot)\|^2_{L^2(k^{-1}\Z^d;\mu^{ (k)})}\le c_{33}\left(
k^{-\frac{(2-\alpha)(d+2\beta)}{2d+\alpha+2\beta}}(\log^{\frac{(1+\gamma)\alpha}{(2(d-\alpha))\wedge d}} k)
+\pi\left(k^\alpha T\right)\right) e^T T.
\end{align*}
Furthermore, we get
\begin{align*}
&  \|u_k(t,\cdot)-v_k(t,\cdot)\|^2_{L^2(\R^d;dx)} \\
&=\sum_{z\in k^{-1}\Z^d}\int_{\prod_{1\le i\le d}(z_i,z_i+k^{-1}]}|u_k(t,x)-v_k(t, x)|^2\,dx\\
&\le 2\sum_{z\in k^{-1}\Z^d}\int_{\prod_{1\le i\le d}(z_i,z_i+k^{-1}]}\left(|u_k(t,x)-v_k(t,z)|^2+
|v_k(t,x)-v_k(t, z)|^2\right)\,dx\\
&=2\|u_k(t,\cdot)-v_k(t,\cdot)\|^2_{L^2(k^{-1}\Z^d;\mu^{(k)})}+2\sum_{z\in k^{-1}\Z^d}\int_{\prod_{1\le i\le d}(z_i,z_i+k^{-1}]}
|v_k(t,x)-v_k(t,z)|^2\,dx\\
&\le 2\|u_k(t,\cdot)-v_k(t,\cdot)\|^2_{L^2(k^{-1}\Z^d;\mu^{(k)})}+c_{34}\bigg(k^{-2}+k^{-2}\sum_{z\in k^{-1}\Z^d}k^{-d}|\phi_{(m+2)(1+\theta)}(k^\alpha t, k z)|^2\bigg)\\
&\le 2\|u_k(t,\cdot)-v_k(t,\cdot)\|^2_{L^2(k^{-1}\Z^d;\mu^{(k)})}+c_{35}\bigg(k^{-2}+k^{-2}\oint_{B_{2^{(m+2)(1+\theta)}}}|\phi_{(m+2)(1+\theta)}(k^\alpha t, \cdot)|^2\bigg)\\
&\le c_{36}k^{-\frac{(2-\alpha)(d+2\beta)}{2d+\alpha+2\beta}}
(\log^{\frac{\alpha(1+\gamma)}{(2(d-\alpha))\wedge d}}k),
\end{align*}
where $c_{36}$ depends on $T$.
Note that
the second inequality follows from
\begin{align*}
& |v_k(t,x)-v_k(t,z)| \\
&\le c_{37}\big(|\bar u(t,x)\psi_{k^\theta}(x)-\bar u(t,z)\psi_{k^\theta}(z))|
 + k^{-1}|\nabla (\bar u(t,x)\psi_{k^\theta}(x))-\nabla (\bar u(t,z)\psi_{k^\theta}(z))|\cdot |\phi_{(m+2)(1+\theta)}(k^\alpha t, kz)|\big)\\
&\le c_{38}k^{-1}\left(
(1\vee (c_{39}|x|))^{-d-\beta}
+|\phi_{(m+2)(1+\theta)}(k^\alpha t, kz)|\right)
\qquad \hbox{for } z\in 2^{-1}\Z^d \hbox{ and } x\in \prod_{1\le i\le d}(z_i,z_i+k^{-1}].
\end{align*}
Here we used
\eqref{eq:v_k}, \eqref{t6-1-1a} and the fact
$\phi_{(m+2)(1+\theta)}(k^\alpha t, kx)=\phi_{(m+2)(1+\theta)}(k^\alpha t, kz)$
for every  $x\in \prod_{1\le i\le d}(z_i,z_i+k^{-1}]$
by the way of extending the function at the beginning of the proof;
 while
in the last inequality we used \eqref{t6-1-1} with the choice of $\theta=\frac{2-\alpha}{2d+\alpha+2\beta}$.

Combining this with \eqref{t6-1-2}, we finally complete the proof for the case $\alpha\in (1,2)$.

When $\beta=\infty$, there exists $R_0>0$ so that ${\rm supp}[\bar u(t,\cdot)]\subset B_{R_0}$ for every
$t\in [0,T]$. In this case, we can use $\bar u$ instead of $\bar u \psi_{k^\theta}$ and follow the arguments
above to prove the desired conclusion.

{\bf Case 2:} We assume that $\alpha\in (0,1]$. Let $
\widetilde\phi_m:\R_+\times\Z^d \to \R^d$ be the solution
to the equation \eqref{e6-7}. For every $2^m\le k<2^{m+1}$, we define $v_k:[0,T]\times k^{-1}\Z^d \to \R$ by
$$
v_k(t,x)=\bar u(t,x)\psi_{k^\theta}(x)+k^{-1}\left\langle \nabla (\bar u(t,x)\psi_{k^\theta}(x)),
\widetilde\phi_{(m+2)(1+\theta)}(t,kx)\right\rangle.
$$
As before, we can extend it to $v_k:[0,T]\times \R^d \to \R$ by setting $\widetilde \phi_{(m+2)(1+\theta)}(t, x)=\widetilde \phi_{(m+2)(1+\theta)}(t, z)$ if $x\in \prod_{1\le i\le d} (z_i,z_i+1]\cap B_{2^{(m+2)(1+\theta)}}$ for
the unique $z\in  B_{2^{(m+2)(1+\theta)}}\cap\Z^d$ and setting
$\widetilde \phi_{(m+2)(1+\theta)}(t,x)=0$ if $x\notin B_{2^{(m+2)(1+\theta)}}$.
Then,
using \eqref{p6-4-1}, \eqref{p6-3--} and \eqref{p6-2-1}, and following the arguments
for {\bf Case 1}, we can find $k_0^*:=k_0^*(\w)\ge1$ such that for all $k\ge k_0^*$,
\begin{align*}
&\sup_{t\in [0,T]}\|u_k(t,\cdot)-\bar u(t,\cdot)\|_{L^2(\R^d;dx)}\\
&\le c_{40}\sqrt{\pi\left(k^\alpha T\right)}+
c_{40}
\begin{cases}
k^{-\frac{d+2\beta}{2(2d+1+2\beta)}}
(\log^{1+\frac{1+\gamma}{(4(d-1))\wedge 2d}}k),
\ &\alpha=1,\\
k^{-\frac{\alpha(d+2\beta)}{2(2d+\alpha+2\beta)}}
(\log^{\frac{(1+\gamma)\alpha}{(4(d-\alpha))\wedge 2d}}k),
\ &\alpha\in (0,1).
\end{cases}
\end{align*}

Next, we are going to prove the remaining estimate in \eqref{t6-1-1aa} for $\alpha\in (0,1)$. By   \eqref{e6-3} and \eqref{e6-4},
\begin{equation}\label{t6-1-7}
\begin{split}
& \frac{\partial}{\partial t}\left(u_k-\bar u\right)(t,x) -\LL_t^k\left(u_k-\bar u\right)(t,\cdot)(x)\\
&=\left(\frac{\partial}{\partial t}u_k(t,x) -\LL_t^k u_k(t,\cdot)(x)\right)-
\left(\frac{\partial}{\partial t}\bar u(t,x) -\bar \LL \bar u(t,\cdot)(x)\right) +
\left(\LL_t^k \bar u(t,\cdot)(x)-\LL \bar u(t,\cdot)(x)\right)\\
&=\LL_t^k \bar u(t,\cdot)(x)-\LL \bar u(t,\cdot)(x):=J(t,x).
\end{split}
\end{equation}
According to \eqref{p6-3--} and \eqref{p6-3-1}, there exists $k_1^*:=k_1^*(\w)>0$ such that for all $k\ge k_1^*$,
$$
\int_0^T\int_{k^{-1}\Z^d}|J(t,x)|^2\,\mu^{  (k)}(dx)\,dt\le c_{41}
\left(\max\{k^{- \frac{2(1-\alpha)}{1+\gamma}},k^{-\frac{d}{1+\gamma}}\}
+\pi\left(k^\alpha T\right)\right)
T.
$$
Putting this into \eqref{t6-1-7} and following the the proof for {\bf Case 1} again, we can prove
for all $k\ge k_1^*$,
$$
\sup_{t\in [0,T]}\|u_k(t,\cdot)-\bar u(t,\cdot)\|_{L^2(\R^d;dx)}\le c_{42}\left(\sqrt{\pi\left(k^\alpha T\right)}+\max\{k^{- \frac{1-\alpha}{1+\gamma}},k^{-\frac{d}{2(1+\gamma)}}\}\right).
$$
This completes the proof of the theorem.
\end{proof}

 \medskip

\begin{remark}
We now explain why the convergence rate in Theorem \ref{T:1.1} contains the term $\pi (k^\alpha T)$ in the right hand side of \eqref{t6-1-1aa}. When $\sK (t)\equiv 0$, clearly $\pi (k^\alpha T)=0$.  One can do a deterministic time-change to reduce
the general case to the case that $\sK(t) \equiv 0$ as follows.

Define
\begin{equation}
a(t):=\int_0^t \sK (r)\, dr \quad \hbox{and} \quad \wt \omega_{t, x, y}:=\frac{w_{a^{-1}(t), x, y}}{\sK (a^{-1}(t))},
\end{equation}
where $a^{-1}(t)$ is the inverse function of the strictly increasing continuous function $a(t)$.
  Then
$\{\wt w_{t, x, y}: t\in [0, \infty), (x, y)\in E\}$ satisfies
{\bf Assumption (H)} with $\Ee [ \wt w_{t, x, y}]  =1$
and $\wt w_{t, x, y} \leq C/K_1$ for all $t\geq 0$ and $(x, y)\in E$.
Define
\begin{equation}\label{e:4.22}
\wt \LL_t^\w f(x):=\sum_{y\in \Z^d:y\neq x}\left(f(y)-f(x)\right)\frac{\wt w_{t,x,y}(\w)}{|x-y|^{d+\alpha}},\quad f\in L^2(\Z^d;\mu),
\end{equation}
and for each integer $k\geq 1$,
\begin{equation}\label{e:4.23}
\wt \LL_t^{k,\w} f(x):=
k^{-d}\sum_{y\in k^{-1}\Z^d:y\neq x}\left(f(y)-f(x)\right)\frac{\wt w_{k^\alpha t,kx,ky}(\w)}{|x-y|^{d+\alpha}},
\quad f\in L^2(\Z^d;\mu^{(k)})
\end{equation}

 Let $T>0$,   $g\in C_c^2 (\R^d)$,  $\beta\in(0,\infty]$ and $h\in \mathscr{S}_{\beta, g,T}$.
 Denote by  $\bar v$ the solution to  \eqref{e6-3} on $[0, \sK T] \times \R^d$ but with 1 in place of $\sK$ in the definition of $\bar \sL$
 and $h (t/\sK, x)/\sK$ in place of $h (t, x)$,
 and $v^\w_k$ the solution to \eqref{e6-4}
 but with $\wt \LL_t^{k,\w} $ in place of
 $\LL_t^{k,\w}$ and $h_k(t, x):= \frac{h( k^{-\alpha} a^{-1}(k^\alpha t), x)}{\sK (a^{-1}(k^\alpha t))}$ in place of $h (t, x)$.
Then
\begin{equation}\label{e:4.24}
u^\w_k (t, x) = v^\w_k (a_k (t), x) \quad \hbox{and} \quad \bar u (t, x)= \bar v(\sK t, x),
\end{equation}
where $a_k(t):=k^{-\alpha} a (k^\alpha t)$.

Suppose that $h=0$. Denote the right hand side of \eqref{e:1.11} by $r(k)$ for $\{ \wt w_{t, x, y}
: t\geq 0 \hbox{ and } (x, y)\in E\}$,
$ v^\w_k (t, x)$ and $\bar v(t, x)$ with $\sK T$ in place of $T$.
Then, by \eqref{e:4.24}, \eqref{e:1.11} and the Lipschitz continuity of $\bar v(t, x)$ with respect to $t$ in the Hilbert space  $L^2(\R^d; dx)$, we have
\begin{align*}
& \sup_{t\in [0, T]} \| u^\omega_k (t, \cdot) -\bar u (t, \cdot)\|_{L^2 (\R^d; dx)} \\
&\leq  \sup_{t\in [0, T]} \|  v^\w_k (a_k (t), \cdot) -  \bar v(\sK t, \cdot)\|_{L^2 (\R^d; dx)} \\
&\leq  \sup_{t\in [0, T]}  \left( \|  v^\w_k (a_k (t), \cdot) -  \bar v(a_k (t), \cdot)\|_{L^2 (\R^d; dx)}
+ \|  \bar v(a_k (t), \cdot) - v(\sK t, \cdot)\|_{L^2 (\R^d; dx)}  \right) \\
&\leq  r(k) + \sup_{t\in [0, T]}  | a_k(t) -\sK t| \\
&\leq  r(k) + \sup_{t\in [0, T]}  \left| k^{-\alpha} \int_0^{k^\alpha t} ( \sK (r) -\sK)  dr \right| \\
&=  r(k) + \sup_{t\in [0, T]}   \int_0^t  | \sK (k^\alpha s) -\sK |   ds \\
&\leq  r(k) +  \int_0^T  | \sK (k^\alpha s) -\sK |   ds \\
&\leq  r(k) +  \pi (k^\alpha T) \sqrt{T}   ,
\end{align*}
where the last inequality is due to the Cauchy-Schwarz inequality.  \qed
\end{remark}

\section{Appendix}

\subsection{Dense property of $\mathscr{S}_{\infty, g,T}$}\label{section5.2} In this part, we assert that, for any $T>0$ and $g\in C_c^2(\R^d)$, the set $\mathscr{S}_{\infty, g,T}$ is dense in $L^2([0,T]\times \R^d;dt\times dx)$. Indeed, let $\varphi\in C_b^\infty(\R)$ be such that $0\le \varphi\le 1$ on $\R$ with $\varphi(r)=1$ for all $r\le 1$ and $\varphi(r)=0$ for all $r\ge 2$. Fix $T>0$ and $g\in C_c^2(\R^d)$. Given $f\in L^2([0,T]\times \R^d;dt\times dx)$, set $$g_n(t,x)=\varphi(|x|-n) \left(\bar P_tg(x)+\int_0^t \bar P_{t-s}f(s, x)\,ds\right),$$ and
$$f_n(t,x)= \frac{\partial}{\partial t}g_n(t,x)-\bar \LL g_n(t,\cdot)(x)$$ for any $(t,x)\in [0,T]\times \R^d$. Then, following the proof of \cite[Lemma A.1]{CCKW3}, we can check that
$f_n\in \mathscr{S}_{\infty, g,T}$ and
$$\lim_{n\to\infty}\|f_n-f\|_{L^2([0,T]\times \R^d;dt\times dx)}=0.$$

\subsection{Proofs of Poincar\'e-type inequalities}\label{section2}
The proofs in this part mainly follow from these of \cite[Propositions 2.2 and 2.3]{CCKW3}, respectively.

\begin{proof}[Proof of Proposition $\ref{l2-1}$]
Note that, by \eqref{l2-0-1}, there are a constant $\delta_0\in (0,1)$, $ \lambda_0>0$ and a random variable $m_0(\w)\ge 1$ such that
for all $m\ge m_0(\w)$,
\begin{equation}\label{l2-1-2}
\mu\left(B_{r,\delta_0}^{t, x_1,x_2}(y)\right)>\lambda_0 \mu(B_r(y)),\quad 0\le t \le 2^{m\alpha}T,\ x_1,x_2,y\in B_{2^m}\ \text{with}\ x_1\neq x_2,
[m^\theta]\le r \le 2^m.
\end{equation}
Thus, for all $f:\Z^d\to \R$, $m\ge m_0(\w)$, $0\le t \le 2^{m\alpha}T$, $y\in B_{2^m}$ and
$[m^\theta]\le r \le 2^m$, according to the Cauchy-Schwartz inequality, we obtain
\begin{align*}
& \int_{B_r(y)}\Big(f(x)-
\oint_{B_r(y)}f\,d\mu
\Big)^2\,\mu(dx)\\
&=
\mu(B_r(y))^{-2}\sum_{x_1\in B_r(y)}\bigg(\sum_{x_2\in B_r(y)
\setminus \{x_1\}}
(f(x_1)-f(x_2))\bigg)^2\\
&\le \mu(B_r(y))^{-1}
\sum_{x_1,x_2\in B_r(y):x_1\neq x_2}\left(f(x_1)-f(x_2)\right)^2\\
&= \mu(B_r(y))^{-1}
\sum_{x_1,x_2\in B_r(y):x_1\neq x_2}\frac{1}{\mu\left(B_{r,\delta_0}^{t, x_1,x_2}(y)\right)}
\sum_{z\in B_{r,\delta_0}^{t, x_1,x_2}(y)}\left(f(x_1)-f(z)+f(z)-f(x_2)\right)^2\\
&\le 2 \lambda_0^{-1}\mu(B_r(y))^{-2}\sum_{x_1,x_2\in B_r(y):x_1\neq x_2} \sum_{z\in B_{r,\delta_0}^{t, x_1,x_2}(y)}
\Big(\left(f(x_1)-f(z)\right)^2+ \left(f(x_2)-f(z)\right)^2\Big),
\end{align*}
where in the second inequality we used \eqref{l2-1-2}.

By the definition of $B_{r,\delta_0}^{t, x_1,x_2}(y)$, we know that $w_{t,x_1,z}>\delta_0$ and $|x_1-z|\le 2r$ for every $z\in B_{r,\delta_0}^{t, x_1,x_2}(y)$ and $x_1,x_2\in B_r(y)$
with $x_1\neq x_2$.
Thus,
\begin{align*}
&\mu(B_r(y))^{-2}\sum_{x_1,x_2\in B_r(y):x_1\neq x_2} \sum_{z\in B_{r,\delta_0}^{t, x_1,x_2}(y)}
\left(f(x_1)-f(z)\right)^2\\
&\le \delta_0^{-1}(2r)^{d+\alpha}\mu(B_r(y))^{-2}\sum_{x_1,x_2\in B_r(y):x_1\neq x_2} \sum_{z\in B_{r,\delta_0}^{t, x_1,x_2}(y)}
\left(f(x_1)-f(z)\right)^2 \frac{w_{t,x_1,z}}{|x_1-z|^{d+\alpha}}\\
&\le c_1r^{-d+\alpha}\sum_{x_1,x_2\in B_r(y):x_1\neq x_2}\sum_{z\in B_r(y)}\left(f(x_1)-f(z)\right)^2 \frac{w_{t,x_1,z}}{|x_1-z|^{d+\alpha}}\\
&\le c_2r^\alpha\sum_{x,z\in B_r(y)}\left(f(x)-f(z)\right)^2 \frac{w_{t,x,z}}{|x-z|^{d+\alpha}}=c_2r^\alpha\mathscr{E}_{t,B_r(y)}^\w(f,f).
\end{align*}
Similarly, we have
\begin{align*}
&\quad\mu(B_r(y))^{-2}\sum_{x_1,x_2\in B_r(y);x_1\neq x_2} \sum_{z\in B_{r,\delta_0}^{t, x_1,x_2}(y)}
\left(f(x_2)-f(z)\right)^2\le c_3r^\alpha\mathscr{E}_{t,B_r(y)}^\w(f,f).
\end{align*}
Putting all the estimates above together, we can obtain the desired conclusion \eqref{l2-1-1}.
\end{proof}

\begin{proof}[Proof of Proposition $\ref{l2-2}$]
For every $1\le k <m$ and $f,g: B_{2^m}\to \R$,
\begin{equation}\label{l2-2-1a}
\begin{split}
&\sum_{z\in \Z_{m,k+1}^d}\int_{B_{2^{k+1}}(z)}f(x)\bigg(g(x)-\oint_{B_{2^{k+1}}(z)} g\,d\mu\bigg)\,\mu(dx)\\
&=\sum_{z\in \Z_{m,k+1}^d}\sum_{y\in \Z_{m,k}^d\cap B_{2^{k+1}}(z) }\int_{B_{2^{k}}(y)}f(x)\bigg(g(x)-\oint_{B_{2^{k+1}}(z)} g\,d\mu\bigg)\,\mu(dx)\\
&=\sum_{z\in \Z_{m,k+1}^d}\sum_{y\in \Z_{m,k}^d\cap B_{2^{k+1}}(z) }
\Bigg[\int_{B_{2^{k}}(y)}f(x)\left(g(x)-\oint_{B_{2^{k}}(y)} g\,d\mu\right)\,\mu(dx)\\
&\qquad \qquad\qquad\qquad\qquad\qquad+
\int_{B_{2^{k}}(y)}f(x)\left(\oint_{B_{2^{k}}(y)} g\,d\mu-\oint_{B_{2^{k+1}}(z)} g\,d\mu\right)\,\mu(dx)\Bigg]\\
&=\sum_{y\in \Z_{m,k}^d}\int_{B_{2^{k}}(y)}f(x)\left(g(x)-\oint_{B_{2^{k}}(y)} g\,d\mu\right)\,\mu(dx)\\
&\quad +\mu(B_{2^k})
\sum_{z\in \Z_{m,k+1}^d}\sum_{y\in \Z_{m,k}^d\cap B_{2^{k+1}}(z) }
\left(\oint_{B_{2^k}(y)}f\,d\mu\right)\left(\oint_{B_{2^{k}}(y)} g\,d\mu-\oint_{B_{2^{k+1}}(z)} g\,d\mu\right).
\end{split}
\end{equation}

According to the Cauchy-Schwartz inequality,
\begin{equation}\label{l2-2-1ab}
\begin{split}
&\mu(B_{2^k})\sum_{z\in \Z_{m,k+1}^d}\sum_{y\in \Z_{m,k}^d\cap B_{2^{k+1}}(z) }
\left(\oint_{B_{2^k}(y)}f\,d\mu\right)\left(\oint_{B_{2^{k}}(y)} g\,d\mu-\oint_{B_{2^{k+1}}(z)} g\,d\mu\right)\\
&\le c_12^{kd}\left(\sum_{z\in \Z_{m,k+1}^d}\sum_{y\in \Z_{m,k}^d\cap B_{2^{k+1}}(z) }
\left(\oint_{B_{2^{k}}(y)} g\,d\mu-\oint_{B_{2^{k+1}}(z)} g\,d\mu\right)^2\right)^{1/2}
\left(\sum_{y\in \Z_{m,k}^d}\left(\oint_{B_{2^k}(y)}f\,d\mu\right)^2\right)^{1/2} \\
 &\le c_12^{kd}\left(\sum_{z\in \Z_{m,k+1}^d}\sum_{y\in \Z_{m,k}^d\cap B_{2^{k+1}}(z) }
  \oint_{B_{2^{k}}(y)} \left(g(x)-\oint_{B_{2^{k+1}}(z)} g\,d\mu\right)^2 \,\mu(dx)    \right)^{1/2}
\left(\sum_{y\in \Z_{m,k}^d}\left(\oint_{B_{2^k}(y)}f\,d\mu\right)^2\right)^{1/2} .
\end{split}
\end{equation}

 By \eqref{l2-1-2}, for a.e. $\w\in \Omega$, there exist constants $\delta_0,  \lambda_0 >0$ and a random variable $m_0(\w)\ge1$ so that for
every $m\ge m_0(\w)$,
\begin{equation}\label{l2-2-2}
\mu\left(B_{2^k,\delta_0}^{t,x_1,x_2}(y)\right)> \lambda_0  \mu(B_{2^k}(y)),\quad 0\le t \le 2^{m\alpha}T,\ x_1,x_2,y\in B_{2^m}\ \text{with}\ x_1\neq x_2,
\theta \log_2 (m\log 2) \le k\le m.
\end{equation}
Hence,  using the Cauchy-Schwartz inequality again, we derive
\begin{align*}
&  \oint_{B_{2^{k}}(y)} \left(g(x)-\oint_{B_{2^{k+1}}(z)} g\,d\mu\right)^2 \,\mu(dx)\\
&=\mu(B_{2^k}(y))^{-1}\mu(B_{2^{k+1}}(z))^{-2}\sum_{x_1\in B_{2^k}(y)}
\left(\sum_{x_2\in B_{2^{k+1}}(z):x_2\neq x_1}(g(x_1)-g(x_2))\right)^2\\
&\le c_22^{-2kd}\sum_{x_1\in B_{2^k}(y), x_2\in B_{2^{k+1}}(z), x_1\neq x_2}\left(g(x_1)-g(x_2)\right)^2\\
&= c_22^{-2kd}\sum_{x_1\in B_{2^k}(y), x_2\in B_{2^{k+1}}(z), x_1\neq x_2}\mu\left(B_{2^k,\delta_0}^{t,x_1,x_2}(y)\right)^{-1}\sum_{z_1\in B_{2^k,\delta_0}^{t,x_1,x_2}(y)}\left(g(x_1)-g(z_1)+g(z_1)-g(x_2)\right)^2\\
&\le c_32^{-3kd}\sum_{x_1\in B_{2^k}(y), x_2\in B_{2^{k+1}}(z), x_1\neq x_2}
\sum_{z_1\in B_{2^k,\delta_0}^{t,x_1,x_2}(y)}\Big(\left(g(x_1)-g(z_1)\right)^2+\left(g(z_1)-g(x_2)\right)^2\Big),
\end{align*}
where the last inequality follows from \eqref{l2-2-2}. Note that
$w_{t,x_1,z_1}>\delta_0$, $w_{t,x_2,z_1}>\delta_0$, $|x_1-z_1|\le 2^{k+1}$ and $|x_2-z_1|\le 2^{k+3}$
for all $z_1\in B_{r,\delta_0}^{t,x_1,x_2}(y)$, $x_1\in B_{2^k}(y)$, $x_2\in B_{2^{k+1}}(z)$, $z\in \Z_{m,k+1}^d$ and
$y\in \Z_{m,k}^d\cap B_{2^{k+1}}(z).$
We further get
\begin{align*}
&2^{-3kd}\sum_{x_1\in B_{2^k}(y), x_2\in B_{2^{k+1}}(z), x_1\neq x_2}
\sum_{z_1\in B_{2^k,\delta_0}^{t,x_1,x_2}(y)}\left(g(x_1)-g(z_1)\right)^2\\
&\le 2^{-3kd}2^{(k+1)(d+\alpha)}\delta_0^{-1}\sum_{x_1\in B_{2^k}(y), x_2\in B_{2^{k+1}}(z), x_1\neq x_2}
\sum_{z_1\in B_{2^k,\delta_0}^{t,x_1,x_2}(y)}\left(g(x_1)-g(z_1)\right)^2 \frac{w_{t,x_1,z_1}}{|x_1-z_1|^{d+\alpha}}\\
&\le c_4 2^{-k(2d-\alpha)}\sum_{x_1\in B_{2^k}(y), x_2\in B_{2^{k+1}}(z), x_1\neq x_2}
\sum_{z_1\in B_{2^k}(y)}\left(g(x_1)-g(z_1)\right)^2 \frac{w_{t,x_1,z_1}}{|x_1-z_1|^{d+\alpha}}\\
&\le c_5 2^{-k(d-\alpha)}\sum_{x_1,z_1\in B_{2^k}(y)}\left(g(x_1)-g(z_1)\right)^2 \frac{w_{t,x_1,z_1}}{|x_1-z_1|^{d+\alpha}}.
\end{align*}
Similarly, we can obtain
\begin{align*}
&\quad 2^{-3kd}\sum_{x_1\in B_{2^k}(y), x_2\in B_{2^{k+1}}(z), x_1\neq x_2}
\sum_{z_1\in B_{2^k,\delta_0}^{t,x_1,x_2}(y)}\left(g(x_1)-g(z_1)\right)^2\\
&\le c_6 2^{-k(d-\alpha)}\sum_{x_2\in B_{2^{k+1}}(z),z_1\in B_{2^k}(y)}\left(g(x_2)-g(z_1)\right)^2 \frac{w_{t,x_2,z_1}}{|x_2-z_1|^{d+\alpha}}.
\end{align*}
Thus, for every $m\ge m_0(\w)$, $0\le t \le 2^{m\alpha}T$ and
$k \in \Z \cap [ \theta \log_2 (m\log 2) , m]$,
 \begin{align*}
\oint_{B_{2^{k}}(y)} \left(g(x)-\oint_{B_{2^{k+1}}(z)} g\,d\mu\right)^2 \,\mu(dx)\le
c_72^{-k(d-\alpha)}\int_{B_{2^k}(y)}\int_{B_{2^{k+1}}(z)}(g(x_1)-g(x_2))^2\frac{w_{t,x_1,x_2}}{|x_1-x_2|^{d+\alpha}}\,\mu(dx_1)\,\mu(dx_2),
\end{align*}
and so
\begin{align*}
 & \sum_{z\in \Z_{m,k+1}^d}\sum_{y\in \Z_{m,k}^d\cap B_{2^{k+1}}(z) }
\oint_{B_{2^{k}}(y)} \left(g(x)-\oint_{B_{2^{k+1}}(z)} g\,d\mu\right)^2 \,\mu(dx) \\
 &\le c_72^{-k(d-\alpha)}\sum_{z\in \Z_{m,k+1}^d}\sum_{y\in \Z_{m,k}^d\cap B_{2^{k+1}}(z) }
\int_{B_{2^k}(y)}\int_{B_{2^{k+1}}(z)}(g(x_1)-g(x_2))^2\frac{w_{t,x_1,x_2}}{|x_1-x_2|^{d+\alpha}}\,\mu(dx_1)\,\mu(dx_2)\\
&\le c_72^{-k(d-\alpha)}
\int_{B_{2^m}}\int_{B_{2^m}}(g(x_1)-g(x_2))^2\frac{w_{t,x_1,x_2}}{|x_1-x_2|^{d+\alpha}}\,\mu(dx_1)\,\mu(dx_2)=2c_72^{-k(d-\alpha)}
\mathscr{E}_{t,B_{2^m}}^\w(g,g).
\end{align*}

Combining this with \eqref{l2-2-1a} and \eqref{l2-2-1ab}, and taking the summation in \eqref{l2-2-1a} from $k=n$
(with $m\ge m_0(\w)$ and
$n\in \Z \cap [ \theta \log_2 (m\log 2 ) ,  m]  $)
to $k=m-1$,
we get  the desired assertion \eqref{l2-2-1}.
\end{proof}

\ \

\noindent {\bf Acknowledgements.}\,\,  The research of Xin Chen is supported by the National Natural Science Foundation of China
(No.\ 12122111).
The research of Zhen-Qing Chen is partially supported by Simons Foundation grant 962037. The research of Takashi Kumagai is supported by JSPS
KAKENHI Grant Number
JP22H00099 and 23KK0050.
The research
of Jian Wang is supported by the NSF of China the National Key R\&D Program of China (2022YFA1006003) and the National Natural Science
Foundation of China (Nos. 12225104 and 12531007).

\end{document}